\documentclass[10.5pt,reqno]{amsart}
%%%%%%%%%%%%%%%package%%%%%%%%%%%%%%%%%%
\usepackage{amsfonts}
\usepackage{indentfirst}
\usepackage{graphicx}
\usepackage{amssymb}
\usepackage{color,xcolor}
\usepackage{subfig}
\usepackage{graphicx,epstopdf}
\usepackage{caption}
\usepackage{mathrsfs}
\usepackage{bbm}
\usepackage{chngpage}
\usepackage{cite}
\usepackage{multirow}
\usepackage{multicol}
\usepackage{dsfont}
\usepackage{bm}
\usepackage{amssymb,amsmath,amsthm,mathrsfs}%
\usepackage{exscale}
\usepackage{tikz-cd}
\usepackage{relsize}
\usepackage{amssymb}
\usepackage{hyperref}
\usepackage{url}
\usepackage{tikz-cd}
\usepackage{comment}
\usepackage[misc,geometry]{ifsym} 
\usepackage{pst-solides3d}

\usepackage{enumitem}
%\diagramstyle[labelstyle=\scriptstyle]
%%%%%%%%%%%%%%%text set%%%%%%%%%%%%%%%%%
\setlength{\hoffset}{-0.6in}
\setlength{\voffset}{-0in}
\headheight 12pt
\headsep    16pt
\topskip    12pt
\footskip   16pt
\textwidth      16cm
\textheight     20cm
\topmargin      1cm
%%%%%%%%%%%%%%%%%%%%%%%%%%%%%%%

\setcounter{topnumber}{3}

\setcounter{bottomnumber}{2}

\setcounter{totalnumber}{4}

\setcounter{dbltopnumber}{3}

\floatsep 12pt plus 6pt minus 1pt
\textfloatsep 18pt plus 6pt minus 3pt
\intextsep 18pt plus 3pt minus 2pt
\footnotesep 6pt \skip\footins 12pt plus 6pt minus 4pt
%%%%%%%%%%%%%%%%%%%%%%%%%%%%%%%%%%%%%%%
\allowdisplaybreaks

\hypersetup{hypertex=green,
            colorlinks=true,
            linkcolor=green,
            anchorcolor=green,
            citecolor=red}
%\DeclareMathAlphabet{\mathmatrix}{OT1}{ptm}{b}{n}
%\DeclareMathAlphabet{\mathvector}{OT1}{ptm}{bx}{it}

%%%%%%%%%%%%%%%%%%%%%%%%%%%%%%%%%%

\theoremstyle{definition}%{remark}{\plain}
\newtheorem{theorem}{Theorem}[section]

\newtheorem{example}{Example}

\newtheorem{remark}{Remark}[section]
\newtheorem{lemma}{Lemma}[section]

\newtheorem{proposition}{Proposition}[section]

%\theoremseparator{:}
%\theoremstyle{break}

\numberwithin{equation}{section}%
\numberwithin{table}{section}%
\numberwithin{figure}{section}

\def\3bar{{|\hspace{-.02in}|\hspace{-.02in}|}}
\newcommand\tr{\operatorname{tr}}

\newcommand\grad{\operatorname{grad}}
\renewcommand\div{\operatorname{div}}

\newcommand\curl{\operatorname{curl}}

\def\d{\text{d}}

%%%%%%%%%%%%%%%%%%%%%%%%%%%%%%%%%%%%%%%%%%%%%%%%%%%%%%%

\begin{document}
\title[Nonconforming finite elements]{Nonconforming finite elements for the $-\curl\Delta\curl$ and Brinkman problems on cubical meshes}
\keywords{nonconforming elements, $-\curl\Delta\curl$ problem,  Brinkman problem, finite element de Rham complex, Stokes complex}

\author{Qian Zhang}
\email{qzhang15@mtu.edu}
\address{Department of Mathematical Sciences, Michigan Technological University, Houghton, MI 49931, USA. }

\author{Min Zhang}
\email{zhangminyy@csrc.ac.cn (corresponding author)}
\address{Beijing Computational Science Research Center, Beijing, China}

\author{Zhimin Zhang}
\email{zmzhang@csrc.ac.cn; zzhang@math.wayne.edu}
\address{Beijing Computational Science Research Center, Beijing, China; Department of Mathematics, Wayne State University, Detroit, MI 48202, USA}
\thanks{This work is supported in part by the National Natural Science Foundation of China grants NSFC 12131005 and NSAF U2230402.}

\subjclass[2000]{65N30 \and 35Q60 \and 65N15 \and 35B45}

%\date{\today}
\begin{abstract} 
We propose two families of nonconforming elements on cubical meshes: one for the $-\curl\Delta\curl$ problem and the other for the Brinkman problem. The element for the $-\curl\Delta\curl$ problem is the first nonconforming element on cubical meshes. The element for the Brinkman problem can yield a uniformly stable finite element method with respect to the viscosity coefficient $\nu$. The lowest-order elements for the $-\curl\Delta\curl$ and the Brinkman problems have 48 and 30 DOFs on each cube, respectively. 
 The two families of elements are subspaces of $H(\curl;\Omega)$ and $H(\div;\Omega)$, and they, as nonconforming approximation to  $H(\grad\curl;\Omega)$ and $[H^1(\Omega)]^3$, can form a discrete Stokes complex together with the serendipity finite element space and the piecewise polynomial space.
\end{abstract}	
\maketitle

\section{Introduction}
Let $\Omega\subset \mathbb R^3$ be a contractible Lipschitz polyhedral domain. For $\bm  f\in H(\div^0;\Omega)$, we consider the following $-\curl\Delta\curl$ problem:
\begin{equation}\label{prob1}
\begin{split}
-\mu\curl\Delta \curl\bm u+\curl\curl\bm u+\gamma\bm u&=\bm f\ \ \text{in}\;\Omega,\\
\div \bm u &= 0\ \ \  \text{in}\;\Omega,\\
\bm u\times\bm n&=0\ \ \text{on}\;\partial \Omega,\\
\curl \bm u&=0\ \  \text{on}\;\partial \Omega.
\end{split}
\end{equation}
Here $\gamma\geq 0$ is a constant of moderate size, $\mu>0$ is a constant that can approach 0, $\bm n$ is the unit outward normal vector on $\partial \Omega$, and
 $H(\div^0;\Omega)$ is the space of $[L^2(\Omega)]^3$ functions with vanishing divergence, i.e., 
\[H(\text{div}^0;\Omega) :=\{\bm u\in [L^2(\Omega)]^3:\; \div \bm u=0\}.\]
Problem \eqref{prob1} arises in applications related to electromagnetism and continuum mechanics \cite{mindlin1962effects, park2008variational, chacon2007steady}.
 Conforming finite element approximations of this problem require the construction of finite element spaces that belong to $H(\grad\curl;\Omega)$, which are commonly referred to as $H(\grad\curl)$-conforming finite element spaces. Recently, two of the authors, along with their collaborators, developed three families of $H(\grad\curl)$-conforming elements on both triangular and rectangular meshes \cite{Hu2020Simple, Hu2020Simple2, WZZelement}. The corresponding spectral construction of the three families of rectangular elements is detailed in \cite{wang2021h}. In three-dimensional space, two of the authors proposed a tetrahedral $H(\grad\curl)$-conforming element \cite{ZhangCSIAM2021A} consisting of 315 DOFs per element, which was later improved by enriching the shape function space with piecewise-polynomial bubbles to reduce the DOFs to 18 \cite{Hu2022A}. While the construction of $H(\grad\curl)$-conforming elements in two dimensions and on tetrahedral meshes is relatively complete, the development of cubical elements remains a challenge. The only cubical $H(\grad\curl)$-conforming element in the literature has 144 DOFs \cite{wang2021hh}. To address the issue of high DOFs in cubical elements, nonconforming finite elements may offer a viable solution. The existing literature reports two low-order nonconforming elements on tetrahedral meshes \cite{Zheng2011A, huang2020nonconforming}.
However, as far as we are aware, there has been no previous research on the construction of nonconforming cubical elements, which is one objective of this paper. In contrast to the aforementioned nonconforming tetrahedral elements, which have low order accuracy and limited potential for extension to higher orders, our proposed cubical elements are capable of achieving arbitrary order accuracy.

A related problem is the Brinkman model of porous flow, which seeks $(\bm u;p)$ such that 
\begin{equation}\label{prob-brinkman}
\begin{aligned}
-\div(\nu \grad\bm u)+\alpha\bm u+\grad p&=\bm f\ \ \text{in}\;\Omega,\\
\div \bm u &=g\ \ \text{in}\;\Omega,\\
\bm u&=0\ \  \text{on}\;\partial \Omega.
\end{aligned}
\end{equation}
Here $\bm u$ is the velocity, $p$ is the pressure, $\alpha>0$ is the dynamic viscosity divided by the permeability, $\nu>0$ is the effective viscosity, and $\bm f\in [L^2(\Omega)]^3$ and $g\in L^2(\Omega)$  are two forcing terms. We assume $\alpha$ is a moderate constant, $\nu$ is a constant that can approach 0, and $g$ satisfies the compatibility criterion $\int_{\Omega}g\d V=0.$
The Brinkman problem is used to describe the flow of viscous fluids in porous media with fractures. Applications of this model include the petroleum industry, the automotive industry, underground water hydrology, and heat pipes modeling. Depending on the value of effective viscosity $\nu$, the Brinkman problem can be locally viewed as a Darcy or Stokes problem.
When the Brinkman problem is Darcy-dominating ($\nu$ tends to 0),
applying stable Stokes finite element pairs such as the Crouzeix-Raviart  element \cite{crouzeix1973conforming}, the Mini element \cite{arnold1984stable}, and the Taylor-Hood elements \cite{taylor1973numerical} will lead to non-convergent discretizations. Similarly, when the Brinkman problem is Stokes-dominating ($\nu$ is a moderate size), stable  Darcy finite elements such as Raviart-Thomas (RT) elements will fail.
Since in real applications the number and location of Stokes-Darcy interfaces are usually unknown, it is desirable to construct finite elements that are uniformly stable with respect to the parameter $\nu$. It is shown in \cite{xie2008uniformly} that a pointwise divergence-free finite element pair yields a uniformly stable numerical scheme. Conforming finite elements that satisfy this requirement include high-order Scott-Vogelius triangular elements on singular-vertex-free meshes \cite{guzman2019scott}, some macro finite elements \cite{arnold1992quadratic,xu2010new,zhang2009family,fu2020exact,guzman2020exact,christiansen2018generalized,zhang2008p1,zhang2005new,zhang2011quadratic}, and the elements in \cite{guzman2018inf,neilan2015discrete,Hu2022A,neilan2016stokes}. Regarding nonconforming elements,  Tai et al constructed low-order $[H^1]^3$-nonconforming but $H(\div)$-conforming elements on triangular \cite{mardal2002robust} and tetrahedral meshes \cite{tai2006discrete} by modifying  existing $H(\div)$-conforming elements. Later in 2012, Guzm\'an and Neilan \cite{johnny2012family} proposed a family of nonconforming elements on simplicial meshes in both 2D and 3D by using an idea similar to Tai et al's.  In 2017, Chen et al. further extended this idea to cubical meshes, constructing a low-order element with 33 DOFs \cite{chen2017uniformly}. However, the general construction of arbitrary order elements is not yet available in the literature, which is the second objective of this paper.  It is worth noting that there are other uniformly stable nonconforming elements that are not $H(\div)$-conforming, such as those mentioned in \cite{zhou2021low}.

The two model problems are closely related by the following Stokes complex:
\begin{equation}\label{stokes-complex}
\begin{tikzcd}
0 \arrow{r} & \mathbb R\arrow{r}{\subset}& H^{1}(\Omega) \arrow{r}{\grad} &H(\grad\curl;\Omega)\arrow{r}{\curl} & {[H^1(\Omega)]}^3   \arrow{r}{\div} &L^2(\Omega)  \arrow{r}{} & 0.
 \end{tikzcd}
\end{equation}
In this paper, we construct a nonconforming finite element Stokes complex for $r\geq 2$:
\begin{equation}\label{nonconforming-FE-derham-complex}
\begin{tikzcd}
0 \arrow{r} & \mathbb R\arrow{r}{\subset}& \mathcal S_r^0(\mathcal T_h) \arrow{r}{\grad} & \mathcal S_{r-1,r}^{+,1}(\mathcal T_h)\arrow{r}{\curl_h} &  \mathcal S_{r-1}^{+,2}(\mathcal T_h) \arrow{r}{\div_h} & \mathcal S_{r-2}^3(\mathcal T_h) \arrow{r}{} & 0
 \end{tikzcd}
\end{equation} 
on cubical meshes, and prove its exactness on contractible domains.
The main challenge lies in the construction of the $H(\operatorname{grad}\operatorname{curl})$-nonconforming finite element spaces $\mathcal S_{r-1,r}^{+,1}(\mathcal T_h)$ and the $[H^1]^3$-nonconforming finite element spaces $\mathcal S_{r-1}^{+,2}(\mathcal T_h)$. 
The works \cite{johnny2012family,xie2008uniformly,tai2006discrete} provide uniformly-stable nonconforming finite element spaces for Darcy-Stokes-Brinkman models by enriching the $H(\div)$-conforming finite element with some  bubble functions to enforce extra smoothness. This motivates us to construct $\mathcal S_{r-1,r}^{+,1}(\mathcal T_h)$ and $\mathcal S_{r-1}^{+,2}(\mathcal T_h)$ by addressing two key issues: firstly, determining two conforming finite element spaces $\mathcal S_{r-1,r}^{1}(\mathcal T_h)\subseteq H(\operatorname{curl};\Omega)$ and $\mathcal S_{r-1}^{2}(\mathcal T_h)\subseteq H(\operatorname{div};\Omega)$; and secondly, carefully selecting bubble function enrichments to enforce extra smoothness so that consistency errors can be bounded.

To this end, we first construct a discrete sub-complex of the de Rham complex for $r\geq 2$:
\begin{equation}\label{FE-derham-complex}
\begin{tikzcd}
0 \arrow{r} & \mathbb R\arrow[r,"\subset"]& \mathcal S_r^0(\mathcal T_h) \arrow[d,"\cap"]\arrow[r,"\grad"] & \mathcal S_{r-1,r}^1(\mathcal T_h)\arrow[r,"\curl"]\arrow[d,"\cap"] &  \mathcal S_{r-1}^2(\mathcal T_h)\arrow[r,"\div"]\arrow[d,"\cap"] & \mathcal S_{r-2}^3(\mathcal T_h) \arrow[r]\arrow[d,"\cap"] & 0\\
0 \arrow{r} & \mathbb R\arrow{r}{\subset}& H^{1}(\Omega) \arrow{r}{\grad} &H(\curl;\Omega)\arrow{r}{\curl} & H(\div;\Omega)   \arrow{r}{\div} &L^2(\Omega)  \arrow{r}{} & 0,
 \end{tikzcd}
\end{equation}
and establish the exactness of this complex on a contractible domain. The space $\mathcal S_r^0(\mathcal T_h)\subseteq H^1(\Omega)$ is the serendipity finite element space introduced in \cite[(2.2)]{arnold2011serendipity},
while $\mathcal S_{r-2}^3(\mathcal T_h)$ simply denotes the space of discontinuous piecewise polynomials with degree at most $r-2$. The space $\mathcal S_{r-1,r}^{1}(\mathcal T_h)$ is the trimmed $H(\curl)$-conforming finite element space $\mathcal S_{r}^-\Lambda^1(\mathbb{R}^3)$ constructed in \cite{gillette2019trimmed}, and the $\mathcal S_{r-1}^2(\mathcal T_h)$ is the $H(\div)$-conforming finite element space  $\mathcal S_{r-1}\Lambda^2(\mathbb{R}^3)$ in \cite{arnold2014finite}. In fact, the complex \eqref{FE-derham-complex} is constructed by combining the two complexes in \cite{arnold2014finite} and \cite{gillette2019trimmed} with the specific purpose of ensuring that  $\mathcal S_{r-1,r}^{1}(\mathcal T_h)$ and $\mathcal S_{r-1}^2(\mathcal T_h)$ in the resulting complex \eqref{FE-derham-complex} contain at least piecewise linear polynomials with a minimal number of DOFs.
Then we enrich $\mathcal S_{r-1,r}^{1}(\mathcal T_h)$ and $\mathcal S_{r-1}^{2}(\mathcal T_h)$ with some bubble functions to construct $\mathcal S_{r-1,r}^{+,1}(\mathcal T_h)$ and $\mathcal S_{r-1}^{+,2}(\mathcal T_h)$.
Thanks to the exactness of \eqref{FE-derham-complex}, once the bubble function enrichment for one of the spaces $\mathcal S_{r-1,r}^{1}(\mathcal T_h)$ and $\mathcal S_{r-1}^{2}(\mathcal T_h)$ is determined, it will provide a natural option for the bubble function enrichment for the other space. In our case, 
to construct $\mathcal S_{r-1,r}^{+,1}(\mathcal T_h)$, we enrich $\mathcal S_{r-1,r}^{1}(\mathcal T_h)$ with a bubble function space $\mathcal V^{r-2}(K)$ on each cube $K\in \mathcal T_h$. Then the space $\operatorname{curl} \mathcal V^{r-2}(K)$ produces a bubble function enrichment for $\mathcal S_{r-1}^2(\mathcal T_h)$ to construct $\mathcal S_{r-1}^{+,2}(\mathcal T_h)$.
In the lowest-order case ($r=2$), the $\mathcal S_{r-1,r}^{+,1}(\mathcal T_h)$ and $\mathcal S_{r-1}^{+,2}(\mathcal T_h)$ have 48 and 30 DOFs on each element, respectively. We note that $\mathcal S_{1}^{+,2}(\mathcal T_h)$ has the same convergence property as the one in \cite{chen2017uniformly}, but our element has 3 fewer DOFs on each cube. The reason lies in that $\div\mathcal S_{r-1}^{+,2}(\mathcal T_h)$ contains piecewise constants instead of piecewise linear polynomials.
We also note that our $H(\grad\curl)$-nonconforming elements are $H(\curl)$-conforming, while the elements in \cite{Zheng2011A,huang2020nonconforming} are not.  

We demonstrate that the spaces $\mathcal S_{r}^0(\mathcal T_h)$, $\mathcal S_{r-1,r}^{+,1}(\mathcal T_h)$, $\mathcal S_{r-1}^{+,2}(\mathcal T_h)$, and $\mathcal S_{r-2}^{3}(\mathcal T_h)$ form a nonconforming finite element Stokes complex \eqref{nonconforming-FE-derham-complex} and establish its exactness over contractible domains. At the same time, the complex \eqref{nonconforming-FE-derham-complex} is a conforming finite element de Rham complex with enhanced regularity as the bubble function enrichment brings additional smoothness. 
Furthermore, the element pairs $\mathcal S_{r}^0(\mathcal T_h)\times \mathcal S_{r-1,r}^{+,1}(\mathcal T_h)$ and $\mathcal S_{r-1}^{+,2}(\mathcal T_h)\times \mathcal S_{r-2}^{3}(\mathcal T_h)$ are respectively utilized to the $-\operatorname{curl}\Delta\operatorname{curl}$ problem and the Brinkman model with the stability following directly from the exactness of the complex \eqref{nonconforming-FE-derham-complex}. We prove that the two families of finite element pairs lead to convergent  schemes for the two model problems.

% A main difference between our complex and those in \cite{mardal2002robust,tai2006discrete,johnny2012family,chen2017uniformly} is that our complex starts from an $H^1$-conforming finite element space instead of an  $H^2$-nonconforming finite element space.

The remaining part of the paper is organized as follows. In Section 2, we introduce some notation that we use throughout the paper, and we present some polynomial spaces that will be used in Section 3 to define the discrete de Rham complex 
\eqref{FE-derham-complex}, and consequently, the nonconforming Stokes complex \eqref{nonconforming-FE-derham-complex}.
Section 3 is the main part of the paper. We first construct the nonconforming finite element spaces $\mathcal S_{r-1,r}^{+,1}(\mathcal T_h)$ and $\mathcal S_{r-1}^{+,2}(\mathcal T_h)$. Then we fit them into the nonconforming Stokes complex \eqref{nonconforming-FE-derham-complex}, and establish its exactness over contractible domains. In Sections 4 and 5, we apply the two families of newly proposed finite element spaces  $\mathcal S_{r-1,r}^{+,1}(\mathcal T_h)$ and $\mathcal S_{r-1}^{+,2}(\mathcal T_h)$ to solve the two model problems and provide the convergence analysis.  Numerical experiments are presented to validate the nonconforming elements in Section 6. 

\iffalse
The remaining part of the paper is organized as follows. In Section 2, we introduce some notation that we use throughout the paper, and we present some polynomial spaces \textcolor{cyan}{that will be} used \textcolor{cyan}{in Section 3} to define the discrete de Rham complex 
\eqref{FE-derham-complex} \textcolor{cyan}{and consequent the nonconforming Stokes complex \eqref{nonconforming-FE-derham-complex}, which is also a conforming de Rham complex with extra smoothness.}
Section 3 is the main part of the paper, where we \textcolor{cyan}{first} construct the nonconforming finite element spaces $\mathcal S_{r-1,r}^{+,1}(\mathcal T_h)$ and $\mathcal S_{r-1}^{+,2}(\mathcal T_h)$, and show that the two spaces form an exact discrete complex. In Sections 4 and 5, we apply the two newly proposed finite element spaces to solve the two model problems and provide the convergence analysis.  Numerical experiments are presented to validate the nonconforming elements in Section 6. 
%Finally, we summarize our results and give possible extensions in Section 7.
% In Section 3, we introduce the two model problems and their mixed finite element approximations. We also give the conditions that the finite element spaces should satisfy.   
\fi
\section{Preliminary}
\subsection{Notation}
We assume that $\Omega\subset\mathbb{R}^3$ is a contractible Lipschitz polyhedral domain throughout the paper.  We adopt conventional notations for Sobolev spaces such as $H^m(D)$ or $H^m_0(D)$ on a sub-domain $D\subset\Omega$ furnished with the norm $\left\|\cdot\right\|_{m,D}$ and the semi-norm $\left|\cdot\right|_{m,D}$. The space $L^2(D)$ is equipped with the inner product $(\cdot,\cdot)_D$ and the norm $\left\|\cdot\right\|_D$. 
When $D=\Omega$, we drop the subscript $D$. 
%We use $\mathring L^2(D)$ to denote $L^2$ functions with vanishing mean:
%\[\mathring L^2(D)=\left\{q \in L^2(D): \int_D q \d V=0\right\}.\] We also use  $\bm H^{m}(D)$, $\bm H_0^{m}(D)$,  and ${\bm L}^2(D)$ to denote the vector-valued Sobolev spaces  $[H^{m}(D)]^3$, $[H_0^{m}(D)]^3$, and $[L^2(D)]^3$. 

%Let ${\bm u}=(u_1, u_2,u_3)^T$ and ${\bm w}=(w_1, w_2, w_3)^T$, where the superscript $T$ denotes the transpose,
%then ${\bm u} \times {\bm w} = (u_2w_3-w_2u_3, w_1u_3-u_1w_3, u_1w_2-w_1u_2)^T$ and 
%$\nabla \times {\bm u} = (\partial_{x_2}u_{3}- \partial_{x_3}u_{2}, \partial_{x_3}u_{1} -  \partial_{x_1}u_{3}, \partial_{x_1}u_{2} - \partial_{x_2}u_{1} )^T$.
%For convenience, here and hereinafter we abbreviate the partial differential operators $\frac{\partial }{\partial x_i}$
%to $\partial_{ x_i}$.
%We denote $(\nabla\times)^2\bm u=\nabla\times\nabla\times\bm u$.

In addition to the standard Sobolev spaces, we also define
\begin{align*}
  	& H(\text{curl};D):=\{\bm u \in [L^2(D)]^3:\; \curl \bm u \in [L^2(D)]^3\},\\
  	 &H(\text{div};D):=\{\bm u \in [L^2(D)]^3:\; \div\bm u \in  L^2(D)\},\\
	& H(\grad\curl;D):=\{\bm u \in H(\operatorname{curl};\Omega):\;  \operatorname{curl}  \bm u \in [H^1(D)]^3\},\\
	&H^s(\curl;D):=\{\bm u \in [H^s(D)]^3:\; \curl\bm u \in [H^s(D)]^3\}.
\end{align*}

%If we construct a conforming finite element space of $H(\text{curl}^2;\Omega)$, then the space will be in $H(\grad\curl;\Omega)$, which is the reason that we refer to the elements as curl-curl or grad-curl conforming elements.

%The space of $\bm L^2(D)$ functions with square-integrable divergence is denoted by $H(\td;D)$ and defined by
%\[H(\text{div};D) :=\{\bm u\in {\bm L}^2(D):\; \nabla\cdot \bm u\in L^2(D)\}\]
%with the associated inner product
%$(\bm u,\bm v)_{H(\td;D)}=(\bm u,\bm v)+(\nabla\cdot \bm u,\nabla\cdot \bm v)$ and the norm
%$\left\|\bm u\right\|_{H(\td;D)}=\sqrt{(\bm u,\bm u)_{H(\td;D)}}$.

%We use $Q_{i,j,k}$ to represent the polynomials in three variables $(x_1,x_2,x_3)$ whose maximum degree  are respectively $i$ in $x_1$, $j$ in $x_2$, and $k$ in $x_3$. For simplicity, we drop   subscripts $i$ and $j$ when $i=j=k$. 
For a subdomain $D$, we use $P_r(D)$, or simply $P_{r}$ when there is no possible confusion, to denote the space of polynomials with degree at most $r$ on $D$. We denote $\widetilde{P}_r$ to be the space of homogeneous polynomials of degree $r$.
 %We also define
%\begin{align*}
%&\mathcal R_k=\bm P_{k-1}\oplus \mathcal S_k \text{\ with\ }\mathcal S_k=\{{\bm p}\in \widetilde{\bm P_k}\big| \ \bm x\cdot \bm p=0\},
%%\mathcal D_k=\bm P_{k-1}\oplus  \bm x& \widetilde P_{k-1} \text{\ with\ } \widetilde P_{k-1}\text{\ is the space of homogeneous polynomials of degree } k-1 .
%\end{align*}
%%The dimensions of these special spaces of vector polynomials  are 
%whose dimension is
%\begin{align*}
%&\dim{\mathcal R_k}=\frac{k(k+2)(k+3)}{2}.
%&\dim{\mathcal D_k}=\frac{k(k+1)(k+3)}{2}.
%\end{align*}
%For the space $\bm P_k$, we have the following decomposition \cite{da2018lowest}
%\begin{align}
%&\bm P_k=\nabla P_{k+1} \oplus \bm x\times \bm P_{k-1},\label{Pkdecomp1}\\
%&\bm P_k=\nabla\times \mathcal R_{k+1} \oplus  \bm x P_{k-1}\label{Pkdecomp2}.
%\end{align}
%The dimension of $ \bm x\times \bm P_{k-1}$ is $\dim{\bm P_k}-\dim{P_{k+1}}+1$ and the dimension of $\nabla\times \mathcal R_{k+1}$ is $\dim{\bm P_k}-\dim{P_{k-1}}$.

%We use $Q_{i,j,k}(D)$ to denote the polynomials with three variables $(x_1, x_2, x_3)$ where the maximal degree is $i$ in $x_1$, $j$ in $x_2$, and $k$ in $x_3$. For simplicity, we drop the subscripts $i$ and $j$ when $i=j=k$.  Similarly, we use $Q_{i,j}(f)$ to denote such polynomial spaces in 2D. 

%\cite{piolamapping} 

Let \,$\mathcal{T}_h\,$ be a partition of the domain $\Omega$
consisting of shape-regular cubicals. For $K=(x_1^c-h_1,x_1^c+h_1)\times (x_2^c-h_2,x_2^c+h_2)\times (x_3^c-h_3,x_3^c+h_3)$, we denote $h_K=\sqrt{h_1^2+h_2^2+h_3^2}$ as the diameter of $K$ and $h=\max_{K\in\mathcal T_h}h_K$ as the mesh size of $\mathcal {T}_h$. Denote by $\mathcal V_h$, $\mathcal E_h$, and $\mathcal F_h$ the sets of vertices, edges, and faces in the partition, and $\mathcal V_h(K)$, $\mathcal E_h(K)$, and $\mathcal F_h(K)$ the sets of vertices, edges, and faces related to $K$. 
Denote by $\mathcal F_h^0$ the interior faces in the partition. 
We use $\bm\tau_e$ and $\bm n_f$ to denote the unit tangential vector and the unit normal vector to $e\in \mathcal E_h$ and $f\in \mathcal F_h$, respectively. When it comes to each cube $K\in \mathcal T_h$, we denote $\bm n_{f, K}$ as the unit outward normal vector of $f\in \mathcal F_h(K)$. 
Suppose $f=K_1\cap K_2\in \mathcal F_h^0$. For a function $v$ defined on $K_1\cup K_2$, we define
 $[\![v]\!]_f=f|_{K_1}-f|_{K_2}$ to be the jump across $f$. When $f\subset \partial \Omega$, $[\![v]\!]_f=v$.

We use $C$ to denote a generic positive constant that is independent of $h$.

%the orthogonal projection  $\mathcal{P}_{f}^{r-3}: L^{2}(f)\rightarrow [P_{r-3}(f)]^{2}$ is defined by 
%\begin{equation}
%\langle\mathcal{P}_{f}^{r}\bm{v}, \bm{q}\rangle_{f}= \langle\bm{v}, \bm{q}\rangle_{f}, \quad \text{for all}~~\bm{q}\in [P_{r}(f)], f\in \mathcal{F}_{h}.
%\end{equation}
\subsection{Some polynomial spaces} 
%A polynomial de Rham complex is established and proved to be exact on a contractible domain in this subsection. The proof is easy, while the result plays an important role in the construction of the nonconforming finite element Stokes complexes in the subsequent section.
 
We introduce some polynomial spaces on $K\in\mathcal T_h$. They are frequently used throughout the paper. 
\paragraph{{\bf Space $\mathcal S_r^0(K)$}}
The space $\mathcal S_r^0(K)$ is the serendipity space defined in \cite[Definition 2.1]{arnold2011serendipity}.
For $r\geq 1$, the space $\mathcal S_r^0(K)$ on $K$ contains all polynomials of superlinear degree at most $r$, where the superlinear degree of a polynomial is the degree with ignoring variables which enter linearly, for example, the superlinear degree of $x^2yz^3$ is $5$. This implies $P_{r}(K)\subseteq \mathcal S_r^0(K)\subseteq P_{r+2}(K)$. To be specific, the space $\mathcal S^0_2(K)$ is spanned by the monomials in $P_2(K)$ and  $x_1x_2^2,x_1x_3^2,x_2x_1^2,x_2x_3^2,x_3x_1^2,x_3x_2^2,x_1x_2x_3,x_1x_2x_3^2,x_1^2x_2x_3,x_1x_2^2x_3$. The dimension of the space reads\cite[(2.1)]{arnold2011serendipity}:
\begin{equation}\label{dim:S0:K}
    \dim \mathcal S_1^0(K)=8, \, \dim \mathcal S_2^0(K)=20, \, \text{and}~~\dim \mathcal S_r^0(K)=\frac{1}{6}(r+1)(r^2+5r+24)~~ \text{for}~~r\geq 3.
\end{equation}

\paragraph{{\bf Space $\mathcal S_{r-1,r}^1(K)$}} 
For $r\geq 2$, define 
\begin{equation}\label{s1:onK}
    \mathcal S_{r-1,r}^1(K)=\grad \mathcal S_r^0(K)+[P_{r-1}(K)]^3\times \bm x+\big(x_2x_3(w_2-w_3), x_1x_3(w_3-w_1),x_1x_2(w_1-w_2)\big)^\mathrm T,
\end{equation}
where $w_i,\ i=1,2,3$, belong to $\widetilde{P}_{r-1}(K)$ independent of $x_i$. The dimension of this space is
\begin{equation}\label{dim:S1:K}
\dim{\mathcal S_{1,2}^1(K)}=36\text{ and }\dim{\mathcal S_{r-1,r}^1(K)}=\frac{1}{2}r^3 + \frac{5}{2}r^2+ 9r + 3,\ r\geq 3.
\end{equation}
% In fact, the space $\mathcal S_{r-1,r}^1(K)$ is equivalent to the space $\mathcal S_{r}^-\Lambda^1(\mathbb{R}^3)$ given by \cite[(3.1)]{gillette2019trimmed}, which was used to construct the $H(\curl)$-conforming finite element on cubical grids therein.
\paragraph{{\bf Space $\mathcal S_{r-1}^2(K)$}}
For $r\geq 2$, define
\begin{equation}\label{def:s2}
    \mathcal S_{r-1}^2(K)=[P_{r-1}(K)]^3+\curl\big(x_2x_3(w_2-w_3),x_1x_3(w_3-w_1),x_1x_2(w_1-w_2)\big)^{\mathrm T},
\end{equation}
where $w_i,\ i=1,2,3$, belong to $\widetilde{P}_{r-1}(K)$ independent of $x_i$. The dimension of $\mathcal S_{r-1}^2(K)$ is
\begin{equation}\label{dim:S2:K}
   \dim \mathcal S_{r-1}^2(K)= \frac{1}{2}r(r^2 + 3r + 8).
\end{equation}
% It should be noted that $\mathcal S_{r-1}^2(K)$ is equivalent to the space $\mathcal S_{r-1}\Lambda^2(\mathbb{R}^3)$ defined in \cite[(17)]{arnold2014finite}, which is the shape function space of the $H(\div)$-conforming finite element space on cubical grids therein.
In fact, the polynomial spaces $\mathcal S_r^0(K)$, $\mathcal S_{r-1,r}^1(K)$, and $\mathcal S_{r-2}^2(K)$ are the shape function spaces of $\mathcal S_r^0(\mathcal T_h)$, $\mathcal S_{r-1,r}^1(\mathcal T_h)$, and $\mathcal S_{r-2}^2(\mathcal T_h)$ in complex \eqref{FE-derham-complex}, respectively.
To avoid repetition, the DOFs for defining $\mathcal S_r^0(\mathcal T_h)$, $\mathcal S_{r-1,r}^1(\mathcal T_h)$, and $\mathcal S_{r-2}^2(\mathcal T_h)$ are postponed until the subsequent section. 

\section{The construction of the  nonconforming finite element Stokes complexes}

%define each space in \eqref{stokes-complex}.
% \begin{equation}\label{nonconforming-FE-derham-complex}
% \begin{tikzcd}
% 0 \arrow{r} & \mathbb R\arrow{r}{\subset}& \mathcal S_r^0(\mathcal T_h) \arrow{r}{\grad} & \mathcal S_{r-1,r}^{+,1}(\mathcal T_h)\arrow{r}{\curl} &  \mathcal S_{r-1}^{+,2}(\mathcal T_h) \arrow{r}{\div} & \mathcal S_{r-2}^3(\mathcal T_h) \arrow{r}{} & 0,
%  \end{tikzcd}
% \end{equation}
% where $r\geq 2$. 
% The space $\mathcal S_r^0(\mathcal T_h)$, which consists of piecewise $\mathcal S_r^0(K)$, is the serendipity finite element space introduced by \cite{arnold2011serendipity} (the DOFs will be given later), and $\mathcal S_{r-2}^3(\mathcal T_h)$ is simply the space of the discontinuous piecewise $P_{r-2}(K)$. 

%The main challenge lies in the construction of
 In this section, we construct 
 the $H(\grad\curl)$-nonconforming finite element space $\mathcal S_{r-1,r}^{+,1}(\mathcal T_h)$ and the $[H^1]^3$-nonconforming finite element space $\mathcal S_{r-1}^{+,2}(\mathcal T_h)$, and fit them into the  complex \eqref{nonconforming-FE-derham-complex}. To bound the consistency errors, the $\mathcal S_{r-1,r}^{+,1}(\mathcal T_h)$ and $\mathcal S_{r-1}^{+,2}(\mathcal T_h)$ are required to satisfy: 
 \begin{align}
 &\mathcal S_{r-1,r}^{+,1}(\mathcal T_h)\subseteq \{\bm{v}\in H(\operatorname{curl};\Omega): \langle[\![\operatorname{curl}\bm v\times \bm n_f]\!] , \bm q \rangle_f= 0~~\text{for all}~~\bm q \in [P_{r-2}(f)]^2, \, f\in \mathcal{F}_h^0\},  \label{regularity:curl}\\
 &\mathcal S_{r-1}^{+,2}(\mathcal T_h)\subseteq \{\bm{v}\in H(\operatorname{div};\Omega): \langle[\![\bm v\times \bm n_f]\!] , \bm q \rangle_f = 0~~\text{for all}~~\bm q \in [P_{r-2}(f)]^2, \, f\in \mathcal{F}_h^0\}.\label{regularity:div}
 \end{align}
% The polynomial space $\mathcal S_{r-1,r}^1(K)$ can be used to construct a family of $H(\curl)$-conforming finite element spaces \cite{gillette2019trimmed}, while the polynomial space $\mathcal S_{r-1}^2(K)$ can be used to define a family of $H(\div)$-conforming finite element spaces \cite{arnold2014finite}. We denote these spaces as $\mathcal S_{r-1,r}^1(\mathcal T_h)$ and $\mathcal S_{r-1}^2(\mathcal T_h)$, respectively, and their precise definitions will be given later.

To achieve the extra regularity required in \eqref{regularity:curl}, we can develop the finite element space $\mathcal S_{r-1,r}^{+,1}(\mathcal T_h)$ by enriching the $H(\curl)$-conforming finite element space $\mathcal S_{r-1,r}^1(\mathcal T_h)$ with newly constructed bubble functions. Furthermore, the $\operatorname{curl}$ of the bubble functions are used to enrich the $H(\div)$-conforming finite element space $\mathcal S_{r-1}^2(\mathcal T_h)$ so that the extra smoothness in \eqref{regularity:div} holds for $\mathcal S_{r-1}^{+,2}(\mathcal T_h)$.
% Thanks to \cite{gillette2019trimmed}, the polynomial space $\mathcal S_{r-1,r}^1(K)$ is able to yield a family of $H(\curl)$-conforming finite element spaces. And in light of \cite{arnold2014finite}, a family of $H(\div)$-conforming finite element spaces consisting of piecewise $\mathcal S_{r-1}^2(K)$ can be defined. 
% The finite element space $\mathcal S_{r-1,r}^{+,1}(\mathcal T_h)$ can be developed by enriching the $H(\curl)$-conforming finite element space with some newly constructed bubble functions to achieve the extra regularity in \eqref{regularity:curl}. Simultaneously, the space of the $\operatorname{curl}$ of the new bubble function space is supposed to be the bubble function space of the aforementioned $H(\div)$-conforming finite element space, and it brings the extra smoothness such that \eqref{regularity:div} holds for $\mathcal S_{r-1}^{+,2}(\mathcal T_h)$. 

 %Motivated by  \cite{johnny2012family,mardal2002robust, Tai2006ADD,xieXuDarcy2008}, the finite element spaces $\mathcal S_{r-1,r}^{+,1}(\mathcal T_h)$ and  $\mathcal S_{r-1}^{+,2}(\mathcal T_h)$ can be constructed by enriching the existing $H(\operatorname{curl};\Omega)$ and $H(\div)$-conforming finite element spaces with some bubble functions, respectively. The purpose of the 

 %删除Tai2006ADD 有没有构造前面的元

% 思路1 构造高阶非协调元很难。但一个derham复形简单，在复形的基础上可以构造一个加强版复形，这样就解决了咱们的问题。 （借助复形解决问题）
% 思路2 复形就是困难
%
% 简单预告
% 顺序：先谈最重要的两个元的构造
% 再说复形，证明

%% 高阶非协调元的构造本身困难，混合求解问题时，证明inf-sup困难。通过构造恰当的复形序列，为构造任意阶的非协调有限元提供了可能。复形序列的恰当结构在本文中扮演了重要的角色。

\subsection{The construction of the $H(\grad\curl)$-nonconforming elements --- $\mathcal S_{r-1,r}^{+,1}(\mathcal T_h)$}In this subsection, we construct $\mathcal S_{r-1,r}^{+,1}(\mathcal T_h)$ with regularity \eqref{regularity:curl}. 
The main idea is to enrich $\mathcal S_{r-1,r}^{1}(\mathcal T_h)$ by bubble functions constructed locally on each cube.
% In light of \cite[Theorem 4.2]{gillette2019trimmed}, the space $\mathcal S_{r-1,r}^1(K)$ can be used to construct the $H(\curl)$-conforming finite element on cubical grids. 
Therefore, let the shape function space of  $\mathcal S_{r-1,r}^{+,1}(\mathcal T_h)$ be of the form:
\begin{align*}
    \mathcal  S_{r-1,r}^{+,1}(K) =\mathcal S_{r-1,r}^{1}(K) +  \mathcal V^{r-2}(K),
\end{align*}
where $\mathcal S_{r-1,r}^{1}(K)$ is defined by \eqref{s1:onK} and $\mathcal V^{r-2}(K)$ is the bubble function space that is used to enforce the continuity of the moments
\begin{equation}\label{consistencyerror:gradcurl}
    \langle [\![\operatorname{curl}\bm{v}\times \bm{n}_f]\!], \bm{q}\rangle_f= 0 \quad \text{for all}~~\bm{q}\in [P_{r-2}(f)]^2,\, f\in \mathcal{F}_h^0. 
\end{equation}

Given a cube $K=(x_1^c-h_1,x_1^c+h_1)\times (x_2^c-h_2,x_2^c+h_2)\times (x_3^c-h_3,x_3^c+h_3)\in \mathcal{T}_h$, define
\begin{eqnarray*}
    B_K=\prod\limits_{i=1}^{3}(x_i-x_i^c+h_i)(x_i^c+h_i-x_i)&\text{and}& B_f=\frac{B_K}{\lambda_f},
\end{eqnarray*} 
where $\lambda_f$ is linear function such that $\lambda_f=0$ on $f$. An immediate result is 
\begin{align}\label{relation-gradb-bf}
	\grad B_K=-B_f\bm n_f\ \text{ on }f.
\end{align}
Then we define the bubble space $\mathcal V^{r-2}(K)$ by 
	\begin{align}
	\mathcal V^{r-2}(K)&=B_K\sum_{f\in \mathcal F_h(K)}B_f\mathcal V^{r-2}_f(K),\label{defV}
\end{align}
where 
\begin{align*}
    \mathcal V^{r-2}_f(K)&=\left\{\bm q\in [P_{r-2}(K)]^3\times \bm n_f:(\bm q,B_KB_f\bm w)_K=0,\ \forall  \bm w\in [P_{r-3}(K)]^3\times\bm n_f\right\}.
\end{align*}
For any bubble function $\bm z \in \mathcal V^{r-2}(K)$, it is straightforward that 
\begin{eqnarray}
     \bm z|_{\partial K} = 0 &\text{ and }&	(\bm z,\bm w)=0\text{ for any  }\bm w\in [P_{r-3}(K)]^3,\label{bubble-property:a2}
\end{eqnarray}
which yields the following direct sum decomposition. 
\begin{lemma}\label{direct:sum:curl}It holds that
    \[\mathcal  S_{r-1,r}^{+,1}(K) =\mathcal S_{r-1,r}^{1}(K) \oplus  \mathcal V^{r-2}(K).\]
\end{lemma}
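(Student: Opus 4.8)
The plan is to show that the sum $\mathcal S_{r-1,r}^{1}(K) + \mathcal V^{r-2}(K)$ is direct by verifying that the intersection $\mathcal S_{r-1,r}^{1}(K) \cap \mathcal V^{r-2}(K)$ is trivial. Since $\mathcal S_{r-1,r}^{+,1}(K)$ is \emph{defined} as this sum, the only content of the lemma is the directness claim.

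First I would take an arbitrary $\bm z \in \mathcal S_{r-1,r}^{1}(K) \cap \mathcal V^{r-2}(K)$ and exploit the two properties recorded in \eqref{bubble-property:a2}: as an element of $\mathcal V^{r-2}(K)$, it vanishes on $\partial K$ and is $L^2(K)$-orthogonal to $[P_{r-3}(K)]^3$. The idea is that these two conditions are precisely (a subset of) the degrees of freedom that determine an element of $\mathcal S_{r-1,r}^{1}(\mathcal T_h)$ restricted to a single cube $K$: the edge moments $\int_e \bm z\cdot\bm\tau_e\,q\,\d s$ and the face moments $\int_f \bm z\times\bm n_f\cdot\bm q\,\d A$ all vanish because $\bm z|_{\partial K}=0$, and the interior moments against $[P_{r-5}(K)]^3 \oplus \curl[\widetilde P_{r-3}(K)]^3$ vanish because this test space is contained in $[P_{r-3}(K)]^3$ (here $r\ge 2$, and one checks the degree bookkeeping: $P_{r-5}\subset P_{r-3}$ trivially, and $\curl[\widetilde P_{r-3}]^3 \subset [\widetilde P_{r-4}]^3 \subset [P_{r-3}]^3$). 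Hence all DOFs of $\mathcal S_{r-1,r}^{1}(K)$ vanish on $\bm z$, and by unisolvence of that conforming element, $\bm z = 0$. Therefore the sum is direct, which is the statement of the lemma.

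A cleaner variant, if one wants to avoid invoking unisolvence of the trimmed element, is to argue directly that $\mathcal V^{r-2}(K) \cap [\text{polynomials of the relevant degree}]$ meeting the boundary and orthogonality conditions is $\{0\}$: the shape functions in $\mathcal S_{r-1,r}^1(K)$ lie in $[P_{r+1}(K)]^3$ roughly speaking, but the precise structure \eqref{s1:onK} plus vanishing trace forces enough cancellation. I expect, however, that the unisolvence route is the intended one and is shortest. The main obstacle — really the only place requiring care — is the degree accounting ensuring that the interior test space $[P_{r-5}(K)]^3\oplus\curl[\widetilde P_{r-3}(K)]^3$ for $\mathcal S_{r-1,r}^1(\mathcal T_h)$ is genuinely contained in $[P_{r-3}(K)]^3$ so that the orthogonality in \eqref{bubble-property:a2} kills those DOFs; this must be checked for the borderline small values $r=2,3$ as well as the generic case. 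Once that containment is confirmed, the proof is immediate.
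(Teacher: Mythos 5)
Your proof is correct, but it reaches the conclusion by a different mechanism than the paper. You show that any $\bm z \in \mathcal S_{r-1,r}^{1}(K)\cap\mathcal V^{r-2}(K)$ annihilates all of the DOFs (1), (2), (4) of the conforming element --- the edge and face moments because $\bm z|_{\partial K}=0$, and the interior moments because $[P_{r-5}(K)]^3\oplus\curl[\widetilde P_{r-3}(K)]^3\subseteq [P_{r-3}(K)]^3$ together with the orthogonality in \eqref{bubble-property:a2} --- and then invoke the unisolvence of the trimmed element from \cite[Theorem 4.2]{gillette2019trimmed} (recorded in Remark \ref{Hcurlconforming:unisolvence}). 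Your degree bookkeeping is right, including the borderline cases $r=2,3$ where the interior test space is trivial; in effect you are combining Remark \ref{V-bubble} with Remark \ref{Hcurlconforming:unisolvence}, which is exactly the argument the paper deploys later in Theorem \ref{th:unisolvence:curl}. The paper's own proof of this lemma is instead the ``cleaner variant'' you sketch but do not pursue: since $\mathcal S_{r-1,r}^{1}(K)\subseteq [P_{r+1}(K)]^3$ and $\bm z$ vanishes on all six faces, $\bm z$ is divisible by the six pairwise-coprime linear face factors, so $\bm z=B_K\bm w$ with $\bm w\in[P_{r-5}(K)]^3\subseteq[P_{r-3}(K)]^3$, and then $0=(\bm z,\bm w)=(B_K\bm w,\bm w)$ with $B_K>0$ in the interior forces $\bm w=0$. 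The paper's route is self-contained (no appeal to the external unisolvence theorem) and shorter; yours has the advantage of requiring no information about the polynomial degree of the shape functions beyond what the DOFs encode, and it generalizes more readily if the conforming space were replaced by something without a clean degree bound. Both are valid.
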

\begin{proof}
If $\bm z \in \mathcal S_{r-1,r}^{1}(K) \cap  \mathcal V^{r-2}(K)$, the fact that $\mathcal S_{r-1,r}^{1}(K)\subseteq [P_{r+1}(K)]^3$ and $\bm z|_{\partial K}=0$ implies $\bm z = B_K \bm w$ with some $\bm w \in [P_{r-3}(K)]^3$, then \eqref{bubble-property:a2} concludes $\bm z =0$. 
\end{proof}

In addition, we present below a crucial property of the bubble function space  $\mathcal V^{r-2}(K)$.

\begin{lemma}\label{bubble-property:1}
    For $\bm z = B_K\sum\limits_{f\in \mathcal F_h(K)}B_f\bm q_f\in \mathcal V^{r-2}(K)$ with $\bm q_f\in \mathcal V_f^{r-2}(K)$, there holds
    \begin{equation*}
        (\curl \bm z\times\bm n_f)|_{f} = -B_f^2\bm q_f \quad \text{for all}\, f\in \mathcal{F}_h(K). 
    \end{equation*}
    Moreover, if $\bm q_f=0$ on $f$, then $\bm q_f=0$ in $K$.
\end{lemma}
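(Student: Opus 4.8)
The plan is to establish the first identity by computing $\curl\bm z$ directly and restricting it to a fixed face $f$, using the Leibniz rule $\curl(g\bm q)=\grad g\times\bm q+g\,\curl\bm q$ together with the two structural facts that $B_K$ vanishes identically on $\partial K$ and that $\grad B_K|_f=-B_f\bm n_f$ by \eqref{relation-gradb-bf}. The ``moreover'' part will then follow by substituting a carefully chosen admissible test function into the orthogonality relation defining $\mathcal V_f^{r-2}(K)$.

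In more detail, I would write $\bm z=\sum_{f'\in\mathcal F_h(K)}B_KB_{f'}\bm q_{f'}$ and expand $\curl\bm z$ termwise. On the face $f$ the factor $B_K$ vanishes, so every term $B_KB_{f'}\curl\bm q_{f'}$ drops out, and in $\grad(B_KB_{f'})=B_{f'}\grad B_K+B_K\grad B_{f'}$ only the first summand survives on $f$. Moreover, for $f'\neq f$ the linear factor $\lambda_f$ is one of the factors of $B_{f'}=B_K/\lambda_{f'}$, so $B_{f'}|_f=0$ and only the term $f'=f$ contributes. Using \eqref{relation-gradb-bf} one obtains $\grad(B_KB_f)|_f=-B_f^2\bm n_f$, hence $(\curl\bm z)|_f=\grad(B_KB_f)|_f\times\bm q_f=-B_f^2\,(\bm n_f\times\bm q_f)$ on $f$. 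Crossing with $\bm n_f$ and invoking the identity $(\bm n_f\times\bm q_f)\times\bm n_f=\bm q_f-(\bm q_f\cdot\bm n_f)\bm n_f$, which equals $\bm q_f$ because $\bm q_f\in[P_{r-2}(K)]^3\times\bm n_f$ is perpendicular to the unit vector $\bm n_f$, then gives $(\curl\bm z\times\bm n_f)|_f=-B_f^2\bm q_f$.

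For the ``moreover'' part, suppose $\bm q_f=0$ on $f$. Each of its components is a polynomial of degree at most $r-2$ vanishing on the hyperplane $\{\lambda_f=0\}$, hence divisible by $\lambda_f$; thus $\bm q_f=\lambda_f\bm w$ with $\bm w\in[P_{r-3}(K)]^3$, and $\bm w\cdot\bm n_f=\lambda_f^{-1}\,\bm q_f\cdot\bm n_f=0$ as a polynomial identity, so $\bm w\in[P_{r-3}(K)]^3\times\bm n_f$ is an admissible test function in the definition of $\mathcal V_f^{r-2}(K)$. Taking this $\bm w$ as the test function and using $B_K=\lambda_fB_f$ yields $0=(\bm q_f,B_KB_f\bm w)_K=(\lambda_f\bm w,\lambda_fB_f^2\bm w)_K=\int_KB_K^2|\bm w|^2\,\d V$, whence $B_K\bm w\equiv0$; since $B_K$ does not vanish in the interior of $K$ this forces $\bm w\equiv0$, so $\bm q_f\equiv0$ in $K$.

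The argument is essentially bookkeeping, so I do not expect a genuine obstacle; the only points that need a moment of care are verifying that $B_{f'}$ vanishes on $f$ for every $f'\neq f$ — this is where the product structure of $B_K$ is used — and checking that the factorization $\bm q_f=\lambda_f\bm w$ produces a $\bm w$ that is still tangential to $f$ and of degree at most $r-3$, so that it is a legitimate test function in the orthogonality condition.
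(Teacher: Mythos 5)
Your proof is correct and follows essentially the same route as the paper's: the Leibniz rule together with $B_K|_{\partial K}=0$ and $\grad B_K|_f=-B_f\bm n_f$ for the trace identity, and the test function $\bm w=\lambda_f^{-1}\bm q_f$ in the orthogonality condition (equivalently, the paper's $(\bm q_f,B_f^2\bm q_f)_K=0$) for the ``moreover'' part. The only difference is that you spell out the checks the paper leaves implicit --- that $B_{f'}|_f=0$ for $f'\neq f$, that $(\bm n_f\times\bm q_f)\times\bm n_f=\bm q_f$, and that the quotient $\bm w$ is tangential and of degree $r-3$ --- all of which are verified correctly.
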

 \begin{proof}
Using the product rule and \eqref{relation-gradb-bf}, we obtain
\begin{align*}
		(\curl \bm z)|_{\partial K}=&\Big(B_K\textstyle{\sum\limits_{f\in\mathcal F_h(K)}\curl(B_f\bm q_f)+\textstyle{\sum\limits_{f\in\mathcal F_h(K)}}B_f(\grad B_K)\times\bm q_f}\Big)\Big|_{\partial K}\\
		=&-\textstyle{\sum\limits_{f\in\mathcal F_h(K)}}B_f^2\bm n_f\times\bm q_f.
	\end{align*}
	% \begin{align*}
	% 	\left(\curl \bm z\times \bm n_f\right)|_f=&\Big(B_K\textstyle{\sum\limits_{f\in\mathcal F_h(K)}\curl(B_f\bm q_f)\times\bm n_f+B_f(\grad B_K)\times\bm q_f\times\bm n_f}\Big)\Big|_f\\
	% 	=&0-B_f^2\bm n_f\times\bm q_f\times\bm n_f|_f = -B_f^2 \bm q_f.
	% \end{align*}
The fact that $B_f = 0$ on $\partial K\backslash f$ yields
 \[(\curl \bm z\times\bm n_f)|_{f}= -B_f^2\bm n_f\times\bm q_f\times \bm n_f = -B_f^2\bm q_f.\]
    Furthermore, if $\bm q_f=0$ on $f$, then $\bm q_f=\lambda_f\bm p$ with $\bm p\in [P_{r-3}(K)]^3\times\bm n_f.$
	It follows from $B_K\bm p=B_f\bm q_f$
	that 
	\[0=(\bm q_f,B_fB_K\bm p)_K=(\bm q_f,B_f^2\bm q_f)_K,\]
	which implies $\bm q_f=0$ in $K$.
 \end{proof}
 
\begin{remark}\label{curl:norm:remark}
    Lemma \ref{bubble-property:1} implies $\|\curl\bm v\|_K$ defines a norm  for $\bm v\in \mathcal V^{r-2}(K)$. 
\end{remark}
 With the help of Lemma \ref{bubble-property:1}, we can prove the following result. 
\begin{lemma}\label{dim:bubble:V}
There holds
\[\dim \mathcal V^{r-2}(K)=\dim \curl \mathcal V^{r-2}(K) = 6(r-1)r.\]
\end{lemma}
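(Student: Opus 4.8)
The plan is to compute $\dim \mathcal V^{r-2}(K)$ directly from the definition \eqref{defV} and then transfer the count to $\curl \mathcal V^{r-2}(K)$ via Remark \ref{curl:norm:remark}. First I would observe that, by Lemma \ref{bubble-property:1}, the map sending $\bm z = B_K\sum_{f}B_f\bm q_f$ to the tuple $(\bm q_f)_{f\in\mathcal F_h(K)}$ is well-defined: from $(\curl\bm z\times\bm n_f)|_f = -B_f^2\bm q_f$ one recovers each $\bm q_f$ on $f$, and the second part of Lemma \ref{bubble-property:1} says $\bm q_f$ on $f$ determines $\bm q_f$ in $K$. Hence $\mathcal V^{r-2}(K)$ is (linearly isomorphic to) the direct sum $\bigoplus_{f\in\mathcal F_h(K)}\mathcal V_f^{r-2}(K)$, so it remains to count $\dim\mathcal V_f^{r-2}(K)$ for a single face and multiply by $6$ (the number of faces of a cube).

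For a fixed face $f$, the space $[P_{r-2}(K)]^3\times\bm n_f$ is a $2$-dimensional-per-point object: crossing with the fixed unit vector $\bm n_f$ kills the component along $\bm n_f$, so $[P_{r-2}(K)]^3\times\bm n_f$ is isomorphic to $[P_{r-2}(K)]^2$ and has dimension $2\binom{r+1}{3}$... more precisely $2\dim P_{r-2}(K) = 2\cdot\frac{(r-1)r(r+1)}{6} = \frac{(r-1)r(r+1)}{3}$. Similarly $[P_{r-3}(K)]^3\times\bm n_f \cong [P_{r-3}(K)]^2$ has dimension $2\dim P_{r-3}(K) = \frac{(r-2)(r-1)r}{3}$. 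The defining orthogonality condition in $\mathcal V_f^{r-2}(K)$ is $(\bm q, B_KB_f\bm w)_K = 0$ for all $\bm w\in[P_{r-3}(K)]^3\times\bm n_f$; I would argue this is exactly $2\dim P_{r-3}(K)$ independent linear constraints on $\bm q$, i.e. the pairing $\langle \bm q,\bm w\rangle\mapsto (\bm q,B_KB_f\bm w)_K$ is nondegenerate on $([P_{r-2}(K)]^3\times\bm n_f)\times([P_{r-3}(K)]^3\times\bm n_f)$. Nondegeneracy follows because if $\bm w\in[P_{r-3}(K)]^3\times\bm n_f$ pairs to zero with every $\bm q$ in the larger space, then in particular with $\bm q = \bm w$, giving $(B_KB_f|\bm w|^2)_K=0$ (here $B_KB_f>0$ in the interior), hence $\bm w=0$. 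Therefore
\[
\dim\mathcal V_f^{r-2}(K) = \frac{(r-1)r(r+1)}{3} - \frac{(r-2)(r-1)r}{3} = \frac{(r-1)r\big((r+1)-(r-2)\big)}{3} = (r-1)r,
\]
and summing over the $6$ faces gives $\dim\mathcal V^{r-2}(K) = 6(r-1)r$.

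Finally, for $\curl\mathcal V^{r-2}(K)$: Remark \ref{curl:norm:remark} tells us $\|\curl\bm v\|_K$ is a norm on $\mathcal V^{r-2}(K)$, so $\curl\colon\mathcal V^{r-2}(K)\to[L^2(K)]^3$ is injective, whence $\dim\curl\mathcal V^{r-2}(K) = \dim\mathcal V^{r-2}(K) = 6(r-1)r$. I expect the main obstacle to be the verification that the orthogonality constraints defining $\mathcal V_f^{r-2}(K)$ are genuinely independent, i.e. the nondegeneracy of the weighted pairing and the attendant bookkeeping that $[P_{r-2}(K)]^3\times\bm n_f$ really has dimension $2\dim P_{r-2}(K)$ (one must check the cross product with a coordinate-aligned $\bm n_f$ is onto the two-dimensional tangential fiber and has no polynomial-degree subtleties); once that is in hand the arithmetic is routine and the $\curl$ step is immediate from the cited remark.
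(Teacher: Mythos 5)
Your proposal is correct and follows essentially the same route as the paper: compute $\dim\mathcal V_f^{r-2}(K)$ as the difference $\frac{(r+1)(r-1)r}{3}-\frac{(r-2)(r-1)r}{3}=r(r-1)$, multiply by the six faces, and deduce $\dim\curl\mathcal V^{r-2}(K)=\dim\mathcal V^{r-2}(K)$ from the injectivity of $\curl$ on the bubble space via Lemma \ref{bubble-property:1}. The only difference is that you explicitly verify two points the paper leaves implicit --- the independence of the orthogonality constraints (nondegeneracy of the weighted pairing, using $B_KB_f>0$ in the interior) and the directness of the sum over faces --- both of which are correct and strengthen the argument.
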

\begin{proof}	
	Note that the dimension of $[P_{r-2}(K)]^3\times \bm n_f$ is $\frac{(r+1)(r-1)r}{3}$ and the dimension of $[P_{r-3}(K)]^3\times \bm n_f$ is $\frac{(r-2)(r-1)r}{3}$. Therefore, 
	\[\dim \mathcal V^{r-2}_f(K)=\frac{(r+1)(r-1)r-(r-2)(r-1)r}{3}=r(r-1),\]
	and hence $\dim \mathcal V^{r-2}(K)=6r(r-1)$.
 To prove $\dim \curl \mathcal V^{r-2}(K)=\dim \mathcal V^{r-2}(K)$, it suffices to prove the kernel of $\curl$ in $\mathcal V^{r-2}(K)$ is empty. To this end, let $\bm v=B_K\sum\limits_{f\in \mathcal F_h(K)}B_f\bm q_f$ with $\bm q_f\in \mathcal V_f^{r-2}(K)$ and suppose $\curl\bm v=0$. From Lemma \ref{bubble-property:1}, there holds $0=\big(\curl\bm v\times\bm n_f\big)\big|_f=-\big(B_f^2\bm q_f\big)\big|_f$ for all $f\in\mathcal F_h(K)$, which yields $\bm q_f=0$ on $f$, and hence $\bm q_f=0$ on $K$. Therefore, $\bm v=0$.
	\end{proof}

%%%%%%%%%%%%%%%%%%%%%%%%%%%%%

Next, the DOFs for $S_{r-1,r}^{+,1}(\mathcal T_h)$ ($r\geq 2$) are presented and proved to be unisolvent. 
For $\bm u\in \mathcal S_{r-1,r}^{+,1}(\mathcal T_h)$, the DOFs are given by
\begin{enumerate}\setcounter{enumi}{0}
	\item moments $\int_e \bm u\cdot\bm\tau_e q\d s,\ \forall q\in P_{r-1}(e)$ at all edges $e\in \mathcal E_h$,
	\item moments $\int_f \bm u\times\bm n_f \cdot \bm q\d A,\ \forall \bm q\in [P_{r-3}(f)]^2 \oplus \grad_f\widetilde{P}_{r-1}(f)$ at all faces $f\in \mathcal F_h$, 
	\item moments $\int_f \curl\bm u\times\bm n_f\cdot \bm q\d A,\ \forall \bm q\in [P_{r-2}(f)]^2$ at all faces $f\in \mathcal F_h$,  and
%	\item moments $\int_f \curl\bm u\cdot\bm nq\d A,\ \forall q\in P_{r-1}(f)\slash\mathbb R$ at all faces $f\in \mathcal F_h$,  and
	\item moments $\int_K \bm u\cdot\bm q\d V,\ \forall \bm q\in [P_{r-5}(K)]^3\oplus\curl[\widetilde{P}_{r-3}(K)]^3$ at all elements $K\in \mathcal T_h$.
\end{enumerate}

\begin{remark}\label{Hcurlconforming:unisolvence}
  According to \cite[Theorem 4.2]{gillette2019trimmed}, the DOFs (1), (2), and (4) on each cube $K\in\mathcal T_h$ are unisolvent for $\mathcal S_{r-1,r}^1(K)$, which leads to the $H(\curl)$-conforming finite element space $\mathcal S_{r-1,r}^1(\mathcal T_h)=\{\bm v\in  H(\operatorname{curl};\Omega):\bm v|_K\in \mathcal S^{1}_{r-1,r}(K)\text{ for all }K\in \mathcal T_h\}$.
\end{remark}

\begin{remark}\label{V-bubble}
   According to \eqref{bubble-property:a2}, the function in $\mathcal V^{r-2}(K)$ vanishes at the DOFs (1), (2), and (4). Therefore, they are indeed bubble functions of $\mathcal S_{r-1,r}^1(\mathcal T_h)$.
\end{remark}

  %%%%%%%%%%%%%%%%%%%%%%%%%%%%%%%
\begin{theorem}\label{th:unisolvence:curl}
The DOFs (1)--(4) for $\mathcal S_{r-1,r}^{+,1}(\mathcal T_h)$ are unisolvent. 
\end{theorem}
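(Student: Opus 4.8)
The plan is to establish unisolvence by a dimension count combined with the direct sum decomposition of Lemma \ref{direct:sum:curl}, together with the unisolvence of the two constituent pieces. First I would verify that the total number of DOFs (1)--(4) equals $\dim \mathcal S_{r-1,r}^{+,1}(K) = \dim \mathcal S_{r-1,r}^1(K) + \dim \mathcal V^{r-2}(K) = \dim \mathcal S_{r-1,r}^1(K) + 6(r-1)r$, using \eqref{dim:S1:K} and Lemma \ref{dim:bubble:V}. The DOFs (1), (2), (4) number $\dim \mathcal S_{r-1,r}^1(K)$ by Remark \ref{Hcurlconforming:unisolvence}, and the DOFs (3) number $6 \cdot \dim [P_{r-2}(f)]^2 = 6 \cdot 2 \cdot \binom{r}{2} = 6r(r-1)$, one family per face of the cube; so the counts match and it remains only to prove that vanishing of all DOFs (1)--(4) forces $\bm u = 0$.

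So suppose $\bm u \in \mathcal S_{r-1,r}^{+,1}(K)$ has all DOFs (1)--(4) equal to zero. Write $\bm u = \bm u_1 + \bm z$ with $\bm u_1 \in \mathcal S_{r-1,r}^1(K)$ and $\bm z = B_K\sum_{f} B_f \bm q_f \in \mathcal V^{r-2}(K)$, $\bm q_f \in \mathcal V_f^{r-2}(K)$, as in Lemma \ref{direct:sum:curl}. The key observation is that $\bm z$ vanishes at DOFs (1), (2), (4) by Remark \ref{V-bubble}, so the vanishing of DOFs (1), (2), (4) for $\bm u$ is exactly the vanishing of those DOFs for $\bm u_1$. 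Next I would examine DOF (3) applied to $\bm u_1$: on each face $f$, $\operatorname{curl} \bm u_1 \times \bm n_f$ is a polynomial on $f$, and since $\mathcal S_{r-1,r}^1(K) \subseteq [P_{r+1}(K)]^3$ its curl lies in $[P_r(K)]^3$; I want to argue that the relevant face component is captured by $[P_{r-2}(f)]^2$ testing --- or, more robustly, to use the exactness/complex structure relating $\mathcal S_{r-1,r}^1(K)$ to $\mathcal S_{r-1}^2(K)$. Actually the cleanest route is: by Lemma \ref{bubble-property:1}, $(\operatorname{curl}\bm z \times \bm n_f)|_f = -B_f^2 \bm q_f$, so DOF (3) for $\bm u$ reads $\langle \operatorname{curl}\bm u_1 \times \bm n_f - B_f^2\bm q_f, \bm q\rangle_f = 0$ for all $\bm q \in [P_{r-2}(f)]^2$; I will then combine this with the already-known vanishing of $\bm u_1$'s trace-type DOFs to pin down $\bm q_f$.

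The main obstacle, and the step requiring the most care, is isolating $\bm q_f$ from this last relation. The right strategy is: since DOFs (1), (2), (4) vanish for $\bm u_1$ and these are unisolvent for $\mathcal S_{r-1,r}^1(K)$ (Remark \ref{Hcurlconforming:unisolvence}), we get $\bm u_1 = 0$ immediately --- but this is only valid if DOF (3) is not needed to determine $\bm u_1$, which is precisely the content of Remark \ref{Hcurlconforming:unisolvence}. Hence $\bm u_1 = 0$ outright, and DOF (3) for $\bm u$ collapses to $\langle B_f^2 \bm q_f, \bm q\rangle_f = 0$ for all $\bm q \in [P_{r-2}(f)]^2$. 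Since $B_f^2 > 0$ in the interior of $f$ and $\bm q_f|_f \in [P_{r-2}(f)]^2 \times \bm n_f \cong [P_{r-2}(f)]^2$, choosing $\bm q$ appropriately (e.g. $\bm q = \bm q_f|_f$ up to the identification $[P_{r-2}(f)]^3\times\bm n_f$ restricted to $f$) forces $\bm q_f = 0$ on $f$, whence $\bm q_f = 0$ in $K$ by the second assertion of Lemma \ref{bubble-property:1}. This holds for every face $f$, so $\bm z = 0$ and therefore $\bm u = 0$. I would double-check the subtle point that $\bm q_f$, a priori an element of $[P_{r-2}(K)]^3 \times \bm n_f$, restricts on $f$ to a full $[P_{r-2}(f)]^2$ (identifying the tangent plane of $f$ with $\mathbb{R}^2$), so that the test functions in DOF (3) genuinely separate the $\bm q_f$'s; this is where the ``$\times \bm n_f$'' structure of $\mathcal V_f^{r-2}(K)$ is essential. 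Finally, a remark: one must confirm the global space $\mathcal S_{r-1,r}^{+,1}(\mathcal T_h)$ is well-defined from these DOFs, i.e. the edge/face DOFs are single-valued across elements and enforce $H(\operatorname{curl})$-conformity plus the extra regularity \eqref{regularity:curl}; DOF (3) being shared across $f \in \mathcal F_h^0$ delivers \eqref{consistencyerror:gradcurl} directly.
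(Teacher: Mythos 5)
Your proposal is correct and follows essentially the same route as the paper's proof: a dimension count, the direct sum decomposition of Lemma \ref{direct:sum:curl}, the observation that the bubbles vanish at DOFs (1), (2), (4) so that $\bm u_1=0$ by Remark \ref{Hcurlconforming:unisolvence}, and finally Lemma \ref{bubble-property:1} applied to DOF (3) to kill each $\bm q_f$ on $f$ and hence in $K$. The brief detour about testing $\operatorname{curl}\bm u_1\times\bm n_f$ is unnecessary (as you yourself note), and your closing remarks on the identification $[P_{r-2}(K)]^3\times\bm n_f|_f\cong[P_{r-2}(f)]^2$ and on global conformity are fine but not part of the unisolvence argument itself.
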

\begin{proof}
Lemma \ref{direct:sum:curl}, \eqref{dim:S1:K}, and Lemma \ref{dim:bubble:V} show that
\begin{equation*}
    \dim \mathcal  S_{r-1,r}^{+,1}(K)= \dim \mathcal S_{r-1,r}^{1}(K)+\dim \mathcal V^{r-2}(K)=\frac{1}{2}r^3 + \frac{17}{2}r^2+ 3r + 3, ~~\text{if}~~r\geq 3, 
\end{equation*}
and $\dim \mathcal  S_{1,2}^{+,1}(K)= 48$ if $r=2$. Thus the dimension of $\mathcal  S_{r-1,r}^{+,1}(K)$ is equal to the number of the DOFs (1)--(4) on each cube $K\in\mathcal{T}_h$. 
Suppose the DOFs (1)--(4) vanish for $\bm v\in \mathcal S_{r-1,r}^{+,1}(K)$, it suffices to prove $\bm{v}=0$. Lemma \ref{direct:sum:curl} implies $\bm v=\bm v_0+\bm v_b$ with $\bm v_0\in \mathcal S_{r-1,r}^{1}(K)$ and $\bm v_b\in \mathcal V^{r-2}(K)$. By Remark \ref{V-bubble}, the DOFs (1), (2), and (4) vanish for $\bm v_b$. 
Thus $\bm v_0$ vanishes at DOFs (1), (2), and (4) as well, which implies $\bm v_0=0$ by Remark \ref{Hcurlconforming:unisolvence}. Hence $\bm v = \bm v_b =B_K\sum\limits_{f\in\mathcal F_h(K)}B_f\bm q_f$ for some $\bm q_f\in \mathcal V_f^{r-2}(K)$. The vanishing DOFs (3) and Lemma \ref{bubble-property:1} lead to 
\begin{equation}
    0 = \int_f \operatorname{curl}\bm v\times \bm n_f \cdot \bm q_f\, dA = -\int_f B_f^2\bm q_f\cdot \bm q_f\, \d A \quad \text{for all}~~f \in \mathcal F_h(K).
\end{equation}
Therefore, we obtain $\bm q_f= 0$ on $f$ and consequently $\bm q_f = 0$ in $K$. Thus $\bm v= \bm 0$ follows.  
\end{proof}

The global $H(\grad\curl)$-nonconforming finite element space is defined by
\begin{align}
    \mathcal S_{r-1,r}^{+,1}(\mathcal T_h)=\{&\bm{v}\in H(\operatorname{curl};\Omega): \bm v|_K \in \mathcal S_{r-1,r}^{+,1}(K)~~\text{for all}~~K\in\mathcal T_h,\notag \\
    &\langle [\![\operatorname{curl}\bm v\times \bm{n}_f]\!]_f, \bm q \rangle_f=0~~\text{for all}~~\bm q \in [P_{r-2}(f)]^2,\, f\in \mathcal{F}_h^0\}.\label{global:curl:plus}
\end{align}

Suppose $\delta>0$. Denote by  $\bm\Pi_h^1$: $H^{\frac{1}{2}+\delta}(\curl;\Omega)\rightarrow \mathcal S_{r-1,r}^{+,1}(\mathcal T_h)$ the canonical interpolation operator based on the DOFs (1)--(4). We show the approximation property of $\bm\Pi_h^1$ below.

Thanks to \eqref{bubble-property:a2}, we can define the following global space:
% \begin{align}
%     \mathcal V^{r-2}(\mathcal T_h)=\{&\bm v\in  [H_0^1(\Omega)]^3:\bm v|_K\in \mathcal V^{r-2}(K)\text{ for all }K\in \mathcal T_h,\notag \\
%     &\text{the DOFs}\, (3)\, \text{are single-valued}\},
% \end{align}
%     \begin{align}
%     \mathcal S^{1}_{r-1,r}(\mathcal T_h)=\{&\bm v\in  H(\operatorname{curl};\Omega):\bm v|_K\in \mathcal S^{1}_{r-1,r}(K)\text{ for all }K\in \mathcal T_h, \notag\\
%     &\text{the DOFs (1)--(2) and (4) are single-valued}\}.\label{global:s1}
% \end{align}
\begin{align}
    \mathcal V^{r-2}(\mathcal T_h)=\{&\bm v\in  [H_0^1(\Omega)]^3:\bm v|_K\in \mathcal V^{r-2}(K)\text{ for all }K\in \mathcal T_h\}.\label{global:s1}
\end{align}
We define an interpolation $\bm\Pi_h^{\mathcal V}: H^{\frac{1}{2}+\delta}(\curl;\Omega)\rightarrow \mathcal V^{r-2}(\mathcal T_h)$ based on the DOF (3) as follows:
\begin{equation}\label{project:Hcurlbb:V}
 \langle\operatorname{curl}\bm \Pi_h^{\mathcal V} \bm v \times \bm n_f,\bm q\rangle_f=\langle\operatorname{curl}\bm v \times \bm n_f, \bm q\rangle_f ~~\text{for all}~~\bm q \in [P_{r-2}(f)]^2, f\in \mathcal F_h.
\end{equation}
For $\bm v \in H^{\frac{1}{2}+\delta}(\curl;\Omega)$, we have $\bm \Pi_h^{\mathcal V} \bm v = B_K\sum\limits_{f\in \mathcal F_h(K)}B_f \bm q_f$ with $\bm q_f \in \mathcal V_f^{r-2}(K)$. Substituting this into \eqref{project:Hcurlbb:V} and using Lemma \ref{bubble-property:1}, we get
\[\langle\operatorname{curl}\bm \Pi_h^{\mathcal V} \bm v \times \bm n_f,\bm q\rangle_f= -\langle B_f^2 \bm q_f, \bm q\rangle_f=\langle\curl\bm v \times \bm n_f, \bm q\rangle_f~~\text{for all}~~\bm q \in [P_{r-2}(f)]^2,\] 
which leads to $\|\bm q_f\|_{f} \leq C\|\curl\bm v\times \bm n_f\|_{f}$. By Remark \ref{curl:norm:remark} and the scaling argument, we obtain
\begin{equation}\label{project:estimate:Hcurlbb}
\begin{aligned}
    \|\bm\Pi_h^{\mathcal V}\bm v\|_K&\leq Ch_K \|\curl\bm\Pi_h^{\mathcal V}\bm v\|_K\leq Ch_K^{\frac{3}{2}}\sum_{f\in \mathcal F_h}\|\curl\bm\Pi_h^{\mathcal V}\bm v\times \bm n_f\|_{f}\leq Ch_K^{\frac{3}{2}}\sum_{f\in \mathcal F_h}\|B_f^2\bm q_f\|_{f}\\
    &\leq Ch_K^{\frac{3}{2}}\sum_{f\in \mathcal F_h} \|\operatorname{curl}\bm v \times \bm n_f\|_{f}\leq Ch_K(\|\curl\bm v\|_K+h_K\|\curl\bm v\|_{1,K}).
\end{aligned}  
\end{equation}
We also define $\bm\pi^1_h$: $H^{\frac{1}{2}+\delta}(\curl;\Omega)\rightarrow \mathcal S_{r-1,r}^{1}(\mathcal T_h)$ based on DOFs (1), (2), and (4). By the standard techniques in approximation analysis \cite[Theorem 5.41]{Monk2003},  the following approximation properties hold for $\bm v\in H^{s}(\curl;K), \ s\geq \frac{1}{2}+\delta$,
%For any $\bm v\in \mathcal S_{r-1,r}^{1}(K)$, it is easy to verify $\bm \pi^1_h \bm v = \bm v$. Thus the scaling argument shows the estimates
\begin{align}
	&\|\bm v-\bm \pi_h^{1}\bm v\|_{m,K}\leq Ch_K^{s-m}|\bm v|_{s,K}\text{ with } 0\leq m\leq s\leq r,\label{project:estimate:Hcurl:1}\\
	&\|\curl(\bm v-\bm \pi_h^{1}\bm v)\|_{m,K}\leq Ch_K^{s-m}|\curl\bm v|_{s,K}\text{ with } 0\leq m\leq s\leq r.\label{project:estimate:Hcurl:2}
\end{align}

Lemma \ref{direct:sum:curl} and the definitions of $\bm\Pi_h^1$, $\bm\pi_h^1$, and $\bm\Pi_h^{\mathcal V}$ imply
\begin{align}\label{identity}
\bm\Pi_h^1=\bm\pi_h^1+\bm\Pi_h^{\mathcal V}(\bm I-\bm\pi_h^1),
\end{align}
which, together with \eqref{project:estimate:Hcurlbb}--\eqref{project:estimate:Hcurl:2}, yield the approximation property of $\bm \Pi_h^1$ presented below. 
\begin{theorem}\label{est-pi1}
	Let $m$ be an integer and $s\geq 1$ satisfying $0\leq m\leq s\leq r$. Then for any $\bm v\in H^s(\curl;K)$, there holds
	\begin{align*}
	\|\bm v-\bm\Pi^1_h\bm v\|_{m,K}&\leq Ch_K^{s-m}|\bm v|_{s,K},\\
	\|\curl(\bm v-\bm\Pi^1_h\bm v)\|_{m,K}&\leq Ch_K^{s-m}|\curl\bm v|_{s,K}. 
	\end{align*}
\end{theorem}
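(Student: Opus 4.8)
The plan is to reduce the approximation estimates for $\bm\Pi_h^1$ to the known estimates for the conforming interpolation $\bm\pi_h^1$ via the identity \eqref{identity}, namely $\bm\Pi_h^1=\bm\pi_h^1+\bm\Pi_h^{\mathcal V}(\bm I-\bm\pi_h^1)$. First I would write $\bm v-\bm\Pi_h^1\bm v=(\bm v-\bm\pi_h^1\bm v)-\bm\Pi_h^{\mathcal V}(\bm v-\bm\pi_h^1\bm v)$ and estimate the two terms separately. The first term is controlled directly by \eqref{project:estimate:Hcurl:1}. For the second term, I would apply \eqref{project:estimate:Hcurlbb} with $\bm v$ there replaced by the error $\bm w:=\bm v-\bm\pi_h^1\bm v$, giving
\[
\|\bm\Pi_h^{\mathcal V}\bm w\|_K\leq Ch_K(\|\curl\bm w\|_K+h_K\|\curl\bm w\|_{1,K}),
\]
and then bound $\|\curl\bm w\|_K$ and $\|\curl\bm w\|_{1,K}$ using \eqref{project:estimate:Hcurl:2}. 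A small subtlety here: \eqref{project:estimate:Hcurlbb} was stated for $\bm v\in H^{1/2+\delta}(\curl;\Omega)$, but the same scaling argument applies verbatim to $\bm w=\bm v-\bm\pi_h^1\bm v$, which is a polynomial (hence smooth) on $K$; alternatively one simply invokes the bound $\|\bm\Pi_h^{\mathcal V}\bm w\|_K\le Ch_K\|\curl\bm\Pi_h^{\mathcal V}\bm w\|_K$ together with $\|\curl\bm\Pi_h^{\mathcal V}\bm w\times\bm n_f\|_f\le C\|\curl\bm w\times\bm n_f\|_f$ and a trace/inverse inequality to pass from the face norm of $\curl\bm w$ back to the volume norms of $\curl\bm w$, picking up the factors of $h_K$ as written. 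Combining, the second term contributes $Ch_K(h_K^{s-1}|\curl\bm v|_{s,K}+h_K\cdot h_K^{s-2}|\curl\bm v|_{s,K})\le Ch_K^s|\curl\bm v|_{s,K}\le Ch_K^{s-m}|\bm v|_{s,K}$ after also dominating $|\curl\bm v|_{s,K}$ by $|\bm v|_{s,K}$ in the $H^s(\curl)$-seminorm (or simply keeping it as part of the $H^s(\curl;K)$ norm), which together with the first term yields the first asserted bound.

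For the second asserted bound, I would take $\curl$ of the same decomposition: $\curl(\bm v-\bm\Pi_h^1\bm v)=\curl(\bm v-\bm\pi_h^1\bm v)-\curl\bm\Pi_h^{\mathcal V}(\bm v-\bm\pi_h^1\bm v)$. The first term is handled by \eqref{project:estimate:Hcurl:2}. For the second, I would use Remark \ref{curl:norm:remark} (so $\|\curl\cdot\|_K$ is a genuine norm on $\mathcal V^{r-2}(K)$) together with an inverse inequality on the finite-dimensional space $\curl\mathcal V^{r-2}(K)$ to write $\|\curl\bm\Pi_h^{\mathcal V}\bm w\|_{m,K}\le Ch_K^{-m}\|\curl\bm\Pi_h^{\mathcal V}\bm w\|_K$, then bound $\|\curl\bm\Pi_h^{\mathcal V}\bm w\|_K$ by $C\sum_f h_K^{1/2}\|\curl\bm\Pi_h^{\mathcal V}\bm w\times\bm n_f\|_f\le C\sum_f h_K^{1/2}\|\curl\bm w\times\bm n_f\|_f$ exactly as in \eqref{project:estimate:Hcurlbb}, and finally convert the face norms of $\curl\bm w$ into volume norms and apply \eqref{project:estimate:Hcurl:2}. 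This gives $\|\curl\bm\Pi_h^{\mathcal V}\bm w\|_{m,K}\le Ch_K^{-m}\cdot h_K\cdot h_K^{-1}\|\curl\bm w\|_{0,K}+\ldots\le Ch_K^{s-m}|\curl\bm v|_{s,K}$, matching the claim.

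The main obstacle I anticipate is the bookkeeping of $h_K$-powers when passing between face norms and volume norms of $\curl\bm w$ for the bubble part, and making sure that the inverse-inequality step $\|\curl\bm\Pi_h^{\mathcal V}\bm w\|_{m,K}\le Ch_K^{-m}\|\curl\bm\Pi_h^{\mathcal V}\bm w\|_K$ is legitimate — this is where Remark \ref{curl:norm:remark} is essential, since it guarantees the reference-element inverse estimate on $\curl\mathcal V^{r-2}(\hat K)$ is a genuine (finite, positive) constant, and scaling then transfers it to $K$ with the stated $h_K^{-m}$ factor. Everything else is the standard Bramble–Hilbert / scaling machinery already invoked in \eqref{project:estimate:Hcurl:1}--\eqref{project:estimate:Hcurl:2}, so once the $h_K$-exponent accounting for the bubble correction is pinned down the two estimates follow by the triangle inequality.
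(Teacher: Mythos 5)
Your plan is exactly the argument the paper intends: the paper offers no written proof of Theorem \ref{est-pi1} beyond the remark that the identity \eqref{identity} together with \eqref{project:estimate:Hcurlbb}--\eqref{project:estimate:Hcurl:2} yields the result, and your decomposition $\bm v-\bm\Pi_h^1\bm v=(\bm I-\bm\Pi_h^{\mathcal V})(\bm v-\bm\pi_h^1\bm v)$ plus the trace/inverse estimates on the finite-dimensional bubble space (justified by Remark \ref{curl:norm:remark}) is the standard way to fill that in, so the approach is the same and sound. One correction: the parenthetical claim that $|\curl\bm v|_{s,K}$ is dominated by $|\bm v|_{s,K}$ is false since $\curl$ raises the derivative order by one; for the first estimate you should instead bound $\|\curl\bm w\|_{j,K}\leq C|\bm w|_{j+1,K}$ for $\bm w=\bm v-\bm\pi_h^1\bm v$ and apply \eqref{project:estimate:Hcurl:1} (which is consistent with the exponents $h_K^{s-1}$ and $h_K^{s-2}$ you actually wrote), landing on $Ch_K^{s}|\bm v|_{s,K}$ for the bubble correction, and then invoke the same inverse inequality you use in the second part to convert this $L^2$ bound into the required $H^m$ bound with the factor $h_K^{-m}$.
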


%\begin{itemize}
%	\item $\int_e (\bm u-\bm\pi_h^1\bm u)\cdot\bm\tau_e q\d s=0,\ \forall q\in P_{r-1}(e)$ at all edges $e\in \mathcal E_h$,
%	\item $\int_f (\bm u-\bm\pi_h^1\bm u)\times\bm n \bm q\d A=0,\ \forall \bm q\in [P_{r-3}(f)]^2$ at all faces $f\in \mathcal F_h$, 
%%	\item $\int_f \curl(\bm u-\bm\Pi_h^{\mathcal S}\bm u)\times\bm n\cdot \bm q\d A,\ \forall \bm q\in [P_{r-2}(f)]^2$ at all faces $f\in \mathcal F_h$,  and
%	\item $\int_K (\bm u-\bm\pi_h^1\bm u)\cdot\bm q\d V=0,\ \forall \bm q\in [P_{r-5}(K)]^3$ at all elements $K\in \mathcal T_h$.
%\end{itemize}
%and
%\begin{itemize}
%	\item $\int_f \curl(\bm u-\bm\Pi_h^{\mathcal V}\bm u)\times\bm n_f\cdot \bm q\d %A=0,\ \forall \bm q\in [P_{r-2}(f)]^2$ at all faces $f\in \mathcal F_h$.
%\end{itemize}
%respectively. 

\subsection{The construction of the ${[H^1(\Omega)]}^3$-nonconforming elements --- $\mathcal S_{r-1}^{+,2}(\mathcal T_h)$ }We construct the $[H^1]^3$-nonconforming finite element $\mathcal S_{r-1}^{+,2}(\mathcal T_h)$ with regularity \eqref{regularity:div} in this subsection by enriching the $H(\div)$-conforming finite element space $\mathcal S_{r-1}^{2}(\mathcal T_h)$ with some bubbles.

Given a cube $K\in\mathcal{T}_h$, let the shape function space of $\mathcal S_{r-1}^{+,2}(\mathcal T_h)$ be
\begin{align*}
    \mathcal  S_{r-1}^{+,2}(K) =\mathcal S_{r-1}^{2}(K) + \mathcal U^{r-2}(K),
\end{align*}
where $S_{r-1}^{2}(K)$ is introduced in \eqref{def:s2} and \[\mathcal U^{r-2}(K)=\operatorname{curl}\mathcal V^{r-2}(K).\]

\iffalse
\begin{equation}\label{consistencyerror:gradcurl}
    \langle [\![\bm{v}\times \bm{n}_f]\!], \bm{q}\rangle_f= 0 \quad \text{for all}~~\bm{q}\in [P_{r-2}(f)]^2,\, f\in \mathcal{F}_h
\end{equation}
for $\bm v \in \mathcal S_{r-1}^{+,2}(\mathcal T_h)$.
\fi

We present below some key properties of $\mathcal U^{r-2}(K)$. 
\begin{lemma}\label{curlbubble-property}
	If $\bm z=\curl\bm v\in \mathcal U^{r-2}(K)$ with $\bm v\in \mathcal V^{r-2}(K)$, then 
	\begin{align}
		(\bm z\cdot \bm n_f)|_f&=0 \quad\text{ for all } f\in \mathcal F_h(K),\\
		(\bm z,\bm w)&=0\quad\text{ for all  }\bm w\in [P_{r-2}(K)]^3.
	\end{align}
		\end{lemma}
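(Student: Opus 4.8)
The plan is to prove the two identities separately, exploiting that $\bm z = \curl\bm v$ with $\bm v\in\mathcal V^{r-2}(K)$ a bubble function vanishing on $\partial K$ together with the structural facts about $\mathcal V^{r-2}(K)$ already established in Lemmas~\ref{bubble-property:1} and the decomposition \eqref{defV}.

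\emph{The boundary trace identity.} First I would fix a face $f\in\mathcal F_h(K)$ and write $\bm v = B_K\sum_{g\in\mathcal F_h(K)}B_g\bm q_g$ with $\bm q_g\in\mathcal V_g^{r-2}(K)$. From the computation in the proof of Lemma~\ref{bubble-property:1} we already know $(\curl\bm v)|_{\partial K}=-\sum_{g\in\mathcal F_h(K)}B_g^2\,\bm n_g\times\bm q_g$. Restricting to $f$ and using that $B_g=0$ on $\partial K\setminus g$ (in particular on $f$) for every $g\neq f$, only the $g=f$ term survives, so $(\curl\bm v)|_f = -B_f^2\,\bm n_f\times\bm q_f$. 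Dotting with $\bm n_f$ and using $(\bm n_f\times\bm q_f)\cdot\bm n_f=0$ immediately gives $(\bm z\cdot\bm n_f)|_f=0$. This is the easy half.

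\emph{The orthogonality identity.} For the second identity, let $\bm w\in[P_{r-2}(K)]^3$ be arbitrary. Since $\bm z=\curl\bm v$ and $\bm v|_{\partial K}=0$ by \eqref{bubble-property:a2}, integration by parts gives $(\bm z,\bm w)_K = (\curl\bm v,\bm w)_K = (\bm v,\curl\bm w)_K$ with no boundary contribution. Now $\bm w\in[P_{r-2}(K)]^3$ implies $\curl\bm w\in[P_{r-3}(K)]^3$, and by the second relation in \eqref{bubble-property:a2}, namely $(\bm v,\bm p)_K=0$ for all $\bm p\in[P_{r-3}(K)]^3$, we conclude $(\bm v,\curl\bm w)_K=0$, hence $(\bm z,\bm w)=0$. (When $r=2$ the space $[P_{r-3}(K)]^3$ is trivial and the identity is vacuous; I would note this degenerate case explicitly.)

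I do not expect a genuine obstacle here: both parts are short consequences of results already in place. The only point requiring a little care is bookkeeping the index convention in the sum $B_K\sum_g B_g\bm q_g$ versus the single face $f$ at which the trace is evaluated, and making sure the integration-by-parts boundary term really vanishes — which it does because $\bm v$, not merely its normal component, is zero on all of $\partial K$. I would present the argument in exactly the two-step order above, citing \eqref{bubble-property:a2} and Lemma~\ref{bubble-property:1} for the supporting facts.
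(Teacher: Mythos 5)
Your proof is correct. The orthogonality identity is argued exactly as in the paper: integrate by parts using $\bm v|_{\partial K}=0$ (no boundary term) and invoke the orthogonality in \eqref{bubble-property:a2} against $\curl\bm w\in[P_{r-3}(K)]^3$. For the normal-trace identity you take a slightly different but equally valid route: the paper observes directly that $(\curl\bm v\cdot\bm n_f)|_f=\div_f(\bm v\times\bm n_f)|_f=0$ because $\bm v$ vanishes on $\partial K$, whereas you reuse the explicit trace formula $(\curl\bm v)|_f=-B_f^2\,\bm n_f\times\bm q_f$ from Lemma~\ref{bubble-property:1} and dot with $\bm n_f$. The paper's version is marginally more economical since it needs only $\bm v|_{\partial K}=0$ and not the specific bubble structure, but both arguments are complete and your bookkeeping of which $B_g$ survive on $f$ is accurate.
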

\begin{proof}
Since $\bm v|_{\partial K} = 0$, simple calculations lead to
\[\bm z\cdot \bm n_f|_f=\textstyle{\curl\bm v\cdot \bm n_f|_f=\div_f\big(\bm v\times\bm n_f\big)|_f}=0~~\text{for all}~~f\in \mathcal F_h(K).\]
Next, for $\bm w\in [P_{r-2}(K)]^3$, we use integration by parts, the fact that $\bm v|_{\partial K} = 0$, and \eqref{bubble-property:a2} to get
	\begin{align*}
	\textstyle{\big(\bm z,\bm w\big)}=\textstyle{\big(\curl\bm v,\bm w\big)}=\textstyle{\big(\bm v,\curl\bm w\big)}=0.
	\end{align*}
\end{proof}
Proceeding as the proof of Lemma \ref{direct:sum:curl}, with the help of Lemma \ref{curlbubble-property}, we can prove the following result. 
\begin{lemma}\label{direct:sum:div}It holds
    \[\mathcal S_{r-1}^{+,2}(K)=\mathcal S_{r-1}^{2}(K) \oplus \mathcal U^{r-2}(K).\]
\end{lemma}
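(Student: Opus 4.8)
The plan is to mimic the proof of Lemma \ref{direct:sum:curl}. It suffices to show that the intersection $\mathcal S_{r-1}^{2}(K)\cap \mathcal U^{r-2}(K)$ is trivial, since then the sum in the definition of $\mathcal S_{r-1}^{+,2}(K)$ is direct. So suppose $\bm z\in \mathcal S_{r-1}^{2}(K)\cap \mathcal U^{r-2}(K)$, and write $\bm z=\curl \bm v$ for some $\bm v\in \mathcal V^{r-2}(K)$. I would first exploit the normal trace. By Lemma \ref{curlbubble-property}, $(\bm z\cdot\bm n_f)|_f=0$ for every $f\in\mathcal F_h(K)$, so the normal component of $\bm z$ vanishes on $\partial K$; since $\bm z\in\mathcal S_{r-1}^{2}(K)\subseteq [P_{r-1}(K)]^3 + \curl(\dots)$, and in particular $\bm z$ is a polynomial whose normal trace on each face is divisible by the corresponding $\lambda_f$, one concludes $\bm z = B_K\bm w$ for some polynomial vector field $\bm w$ of low degree (roughly $\bm w\in[P_{r-3}(K)]^3$, analogous to the degree bookkeeping in Lemma \ref{direct:sum:curl}).

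The second step is to use the orthogonality property. Lemma \ref{curlbubble-property} also gives $(\bm z,\bm w')=0$ for all $\bm w'\in[P_{r-2}(K)]^3$, and in particular for all $\bm w'\in[P_{r-3}(K)]^3$. Taking $\bm w' = \bm w$ (legitimate once we know $\bm w\in[P_{r-3}(K)]^3$) yields $0=(\bm z,\bm w)=(B_K\bm w,\bm w)=\int_K B_K|\bm w|^2\,\d V$. Since $B_K>0$ in the interior of $K$, this forces $\bm w=0$, hence $\bm z=0$, which is what we want.

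There is one subtlety I would be careful about: to write $\bm z=B_K\bm w$ and to know that $\bm w$ has degree at most $r-3$, I need to control the polynomial degree of $\bm z$. The space $\mathcal S_{r-1}^{2}(K)$ is spanned by $[P_{r-1}(K)]^3$ together with a curl term built from degree-$(r-1)$ homogeneous pieces multiplied by quadratics, so its elements lie in $[P_{r+1}(K)]^3$ — exactly the same ambient degree bound used for $\mathcal S_{r-1,r}^{1}(K)$ in Lemma \ref{direct:sum:curl}. A polynomial of degree at most $r+1$ whose restriction to each of the six faces vanishes must be divisible by $B_K$ (degree $6$), leaving a cofactor of degree at most $r-5$, which is certainly in $[P_{r-3}(K)]^3$; alternatively, vanishing normal trace on each face already forces divisibility of each scalar component by the product of the four $\lambda_f$ not containing that coordinate direction, and combined with the component structure one again lands on $\bm z=B_K\bm w$. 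I expect this degree-counting/divisibility step to be the only place requiring genuine attention; the rest is a direct transcription of the argument for Lemma \ref{direct:sum:curl}. One should also record, en route, that $\dim \mathcal S_{r-1}^{+,2}(K)=\dim\mathcal S_{r-1}^{2}(K)+\dim\mathcal U^{r-2}(K)$, using Lemma \ref{dim:bubble:V} which already gives $\dim\mathcal U^{r-2}(K)=\dim\curl\mathcal V^{r-2}(K)=6(r-1)r$.
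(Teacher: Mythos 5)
Your overall strategy (show the intersection is trivial by combining a divisibility argument with the orthogonality in Lemma \ref{curlbubble-property}) is the right one and matches the paper's intent, but the key intermediate claim is false. For $\bm z\in\mathcal S_{r-1}^{2}(K)\cap\mathcal U^{r-2}(K)$, Lemma \ref{curlbubble-property} only gives vanishing of the \emph{normal} component $(\bm z\cdot\bm n_f)|_f$ on each face, not of the full trace $\bm z|_{\partial K}$ --- this is exactly where the divergence case differs from Lemma \ref{direct:sum:curl}, where \eqref{bubble-property:a2} really does give $\bm z|_{\partial K}=0$. Vanishing normal trace does not imply $\bm z=B_K\bm w$: writing $\ell_i:=(x_i-x_i^c+h_i)(x_i^c+h_i-x_i)$, the field $\bm z=(\ell_1,0,0)$ has zero normal trace on all six faces yet is not divisible by the degree-six polynomial $B_K$. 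Your fallback (``divisibility of each scalar component by the product of the four $\lambda_f$ not containing that coordinate direction'') is also misstated: on the two faces with normal $\pm\bm e_i$ the normal component equals $\pm z_i$, so what you actually obtain is that $z_i$ is divisible by the \emph{two} linear factors of that opposite pair of faces, i.e.\ $z_i=\ell_i v_i$, and no appeal to ``component structure'' upgrades this to $\bm z=B_K\bm w$.

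The proof is salvageable once the factorization is corrected. Since $\bm z\in\mathcal S_{r-1}^{2}(K)\subseteq[P_{r}(K)]^3$ (the curl term in \eqref{def:s2} has degree at most $r$), each cofactor $v_i$ has degree at most $r-2$, so $(v_1,v_2,v_3)\in[P_{r-2}(K)]^3$ is an admissible test function in the second identity of Lemma \ref{curlbubble-property}; this gives
$0=(\bm z,(v_1,v_2,v_3))=\int_K\big(\ell_1v_1^2+\ell_2v_2^2+\ell_3v_3^2\big)\,\d V$,
and since each $\ell_i>0$ in the interior of $K$ we conclude $v_i=0$, hence $\bm z=0$. (Equivalently, and perhaps closer to what the paper has in mind: the two properties in Lemma \ref{curlbubble-property} say precisely that $\bm z$ annihilates the DOFs (5) and (7), which are unisolvent on $\mathcal S_{r-1}^{2}(K)$ by \cite[Theorem 3.6]{arnold2014finite}, so $\bm z=0$ at once.) As written, however, the step ``$\bm z=B_K\bm w$'' is a genuine error rather than a bookkeeping gap.
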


From Lemma \ref{dim:bubble:V}, we have
\begin{align}\label{dim:u}
    \dim \mathcal U^{r-2}(K)=\dim \mathcal V^{r-2}(K)=6(r-1)r.
\end{align}
Therefore, the dimension of $\mathcal S_{r-1}^{+,2}(K)$ is
\begin{align}\label{dim-S2plus}
	\dim \mathcal S_{r-1}^{+,2}(K)= \dim \mathcal S_{r-1}^{2}(K)+\dim \mathcal U^{r-2}(K)=\frac{r(r^2+3r+8)}{2}+6r(r-1).
\end{align}

Next, the DOFs for $\mathcal S_{r-1}^{+,2}(\mathcal T_h)$ ($r\geq 2$) are presented and proved to be unisolvent. 
For $\bm u\in \mathcal S_{r-1}^{+,2}(\mathcal T_h)$, the DOFs are 
\begin{enumerate}\setcounter{enumi}{4}
	\item moments $\int_f \bm u\cdot\bm n_f  q\d A,\ \forall  q\in P_{r-1}(f)$ at all faces $f\in \mathcal F_h$,
	\item moments $\int_f \bm u\times\bm n_f \cdot \bm q\d A,\ \forall \bm  q\in [P_{r-2}(f)]^2$ at all faces $f\in \mathcal F_h$, and 
	\item moments $\int_K \bm u\cdot\bm q\d V,\ \forall \bm q\in [P_{r-3}(K)]^3$ at all elements $K\in\mathcal T_h$.
\end{enumerate}
\begin{remark}\label{unisolvence-div}
    In light of \cite[Theorem 3.6]{arnold2014finite}, DOFs (5) and (7) on each cube $K\in \mathcal T_h$ are unisolvent for the space $\mathcal S_{r-1}^{2}(K)$, which results in the $H(\div)$-conforming finite element space $\mathcal S_{r-1}^{2}(\mathcal T_h)= \{\bm v\in  H(\operatorname{div};\Omega):\bm v|_K\in \mathcal S^{2}_{r-1}(K)\text{ for all }K\in \mathcal T_h\}$.
\end{remark}
\begin{remark}\label{U-bubble}
    By Lemma \ref{curlbubble-property}, the functions in $\mathcal U^{r-1}(K)$ vanish at the DOFs (5) and (7). Then they are indeed bubbles functions of $\mathcal S_{r-1}^{2}(\mathcal T_h)$.
\end{remark}

\begin{theorem}\label{unisolvence-s2}
	The above DOFs (5)--(7) for $\mathcal S_{r-1}^{+,2}(\mathcal T_h)$ are unisolvent.
\end{theorem}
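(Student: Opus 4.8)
The plan is to mirror the structure of the proof of Theorem \ref{th:unisolvence:curl}. First I would verify the dimension count: by Lemma \ref{direct:sum:div} and the dimension formulas \eqref{dim:S2:K} and \eqref{dim:u}, the dimension of $\mathcal S_{r-1}^{+,2}(K)$ equals $\frac{r(r^2+3r+8)}{2}+6r(r-1)$, and one checks that this matches the total number of DOFs (5)--(7): namely $6\cdot\binom{r+1}{2}$ face DOFs from (5), $6\cdot 2\cdot\binom{r}{2}$ face DOFs from (6), and $3\binom{r}{3}$ interior DOFs from (7) (with the appropriate small-$r$ adjustments, e.g. $\dim\mathcal S_1^{+,2}(K)=30$ when $r=2$). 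Since dimension equals the number of functionals, it suffices to show that if all of (5)--(7) vanish on $\bm v\in\mathcal S_{r-1}^{+,2}(K)$, then $\bm v=0$.

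For the injectivity step, Lemma \ref{direct:sum:div} gives the decomposition $\bm v=\bm v_0+\bm v_b$ with $\bm v_0\in\mathcal S_{r-1}^2(K)$ and $\bm v_b\in\mathcal U^{r-2}(K)$. By Remark \ref{U-bubble}, $\bm v_b$ vanishes at DOFs (5) and (7), hence $\bm v_0$ vanishes at DOFs (5) and (7) as well; Remark \ref{unisolvence-div} then forces $\bm v_0=0$. Thus $\bm v=\bm v_b=\curl\bm z$ for some $\bm z=B_K\sum_{f\in\mathcal F_h(K)}B_f\bm q_f\in\mathcal V^{r-2}(K)$ with $\bm q_f\in\mathcal V_f^{r-2}(K)$. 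Now I use the vanishing of DOFs (6): by Lemma \ref{bubble-property:1}, $(\curl\bm z\times\bm n_f)|_f=-B_f^2\bm q_f$, and since $\bm q_f\in[P_{r-2}(K)]^3\times\bm n_f$ restricted to $f$ lies in $[P_{r-2}(f)]^2$, I can take $\bm q=\bm q_f|_f$ in (6) to get
\[
0=\int_f \bm v\times\bm n_f\cdot\bm q_f\,dA = -\int_f B_f^2\,\bm q_f\cdot\bm q_f\,dA,
\]
which yields $\bm q_f=0$ on $f$ for every face $f$. The last clause of Lemma \ref{bubble-property:1} (or equivalently the definition of $\mathcal V_f^{r-2}(K)$) then gives $\bm q_f=0$ in $K$, so $\bm z=0$ and hence $\bm v=\curl\bm z=0$.

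The only genuinely delicate point is the bookkeeping in the dimension count — in particular confirming that the three groups of DOFs are linearly independent as a set (not merely that their cardinality matches the dimension), which is exactly what the injectivity argument establishes, so the two halves of the proof reinforce each other. A secondary subtlety is making sure that the restriction map $\bm q_f\mapsto\bm q_f|_f$ sends $\mathcal V_f^{r-2}(K)$ into the test space $[P_{r-2}(f)]^2$ used in DOF (6), so that $\bm q_f|_f$ is an admissible test function; this is immediate since $\bm q_f\in[P_{r-2}(K)]^3\times\bm n_f$ is a tangential vector field of polynomial degree $\le r-2$. Everything else is a direct transcription of the $H(\grad\curl)$ argument, relying on Lemmas \ref{direct:sum:div}, \ref{curlbubble-property}, \ref{bubble-property:1}, \ref{dim:bubble:V} and Remarks \ref{unisolvence-div}, \ref{U-bubble}.
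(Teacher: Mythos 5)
Your proposal is correct and follows essentially the same route as the paper's proof: matching the dimension of $\mathcal S_{r-1}^{+,2}(K)$ from \eqref{dim-S2plus} against the count of DOFs (5)--(7), splitting $\bm v=\bm v_0+\bm v_b$ via Lemma \ref{direct:sum:div}, killing $\bm v_0$ with Remarks \ref{U-bubble} and \ref{unisolvence-div}, and then using DOF (6) together with Lemma \ref{bubble-property:1} to force $\bm q_f=0$ on each face and hence in $K$. The only difference is that you make explicit the (correct) observation that $\bm q_f|_f$ is an admissible test function in $[P_{r-2}(f)]^2$, a point the paper leaves implicit.
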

\begin{proof}
According to \eqref{dim-S2plus}, the dimension of $\mathcal S_{r-1}^{+,2}(K)$ is exactly the number of DOFs given by (5)--(7) on one element. Therefore, we only need to show that if the DOFs (5)--(7) vanish for $\bm v\in \mathcal S_{r-1}^{+,2}(K)$, then $\bm v\equiv 0$. To this end, we first write $\bm v = \bm v_0+\bm v_b$ with  $\bm v_0\in \mathcal S_{r-1}^{2}(K)$ and $\bm v_b\in \mathcal U^{r-2}(K)$.  It follows from  Lemma \ref{curlbubble-property} that the DOFs (5) and (7) vanish for $\bm v_b$, and hence for $\bm v_0$. Then $\bm v_0 = 0$ according Remark \ref{unisolvence-div}. Now $\bm v = \bm v_b =\curl\Big(B_K\sum\limits_{f\in\mathcal F_h(K)}B_f\bm q_f\Big)$ for some $\bm q_f\in \mathcal V_f^{r-2}(K)$. The vanishing DOFs (6) and Lemma \ref{bubble-property:1} lead to 
\begin{equation}
    0 = \int_f \bm v\times \bm n_f \cdot \bm q_f\, \d A = -\int_f B_f^2\bm q_f\cdot \bm q_f\, \d A \quad \text{for all}~~f \in \mathcal F_h(K).
\end{equation}
Therefore, we obtain $\bm q_f= 0$ on $f$ and consequently $\bm q_f = 0$ in $K$. Thus $\bm v= \bm 0$ follows. 
\end{proof}

The global $[H^1]^3$-nonconforming finite element space is defined by
\begin{align}
    \mathcal S_{r-1}^{+,2}(\mathcal T_h)=\{&\bm{v}\in H(\operatorname{div};\Omega): \bm v|_K \in \mathcal S_{r-1}^{+,2}(K)~~\text{for all}~~K\in\mathcal T_h,\notag \\
    &\langle [\![\bm v\times \bm{n}_f]\!]_f, \bm q \rangle_f=0~~\text{for all}~~\bm q \in [P_{r-2}(f)]^2,\, f\in \mathcal{F}_h^0\}.\label{global:div:plus}
\end{align}

The DOFs (5)--(7) naturally lead to the canonical interpolation $\bm\Pi^2_h$: $[H^{\frac{1}{2}+\delta}(\Omega)]^3\rightarrow \mathcal S_{r-1}^{+,2}(\mathcal T_h)$ with $\delta>0$. We show the approximation property of $\bm \Pi_h^2$ below. 

 Thanks to Lemma \ref{curlbubble-property}, we can define
\begin{align}
    \mathcal U^{r-2}(\mathcal T_h)=\{&\bm v\in   H_0(\div;\Omega):\bm v|_K\in \mathcal U^{r-2}(K)\text{ for all }K\in \mathcal T_h\}.\label{global:s2}
\end{align}
We then define an interpolation $\bm\Pi_h^{\mathcal U}: [H^{\frac{1}{2}+\delta}(\Omega)]^3\rightarrow \mathcal  U^{r-2}(\mathcal T_h)$ with $\delta>0$ based on the DOFs (6) as follows:
\begin{equation}\label{project:Hdivbb:U}
 \langle\bm \Pi_h^{\mathcal U} \bm v \times \bm n_f,\bm q\rangle_f=\langle\bm v \times \bm n_f, \bm q\rangle_f ~~\text{for all}~~\bm q \in [P_{r-2}(f)]^2\text{ and } f\in \mathcal F_h.
\end{equation}
Let $\bm v = \operatorname{curl}\bm w$ with $\bm w\in H^{\frac{1}{2}+\delta}(\operatorname{curl};\Omega)$. By \eqref{project:Hdivbb:U} and \eqref{project:Hcurlbb:V}, there holds
\begin{equation*}
 \langle\bm \Pi_h^{\mathcal U} \operatorname{curl}\bm w \times \bm n_f,\bm q\rangle_f=\langle\operatorname{curl}\bm w \times \bm n_f, \bm q\rangle_f =\langle\operatorname{curl}\bm \Pi_h^{\mathcal V} \bm w \times \bm n_f,\bm q\rangle_f  ~~\text{for all}~~\bm q \in [P_{r-2}(f)]^2\text{ and } f\in \mathcal F_h,
\end{equation*}
This implies a commuting property 
\begin{equation}\label{commute:U:V}
 \bm \Pi_h^{\mathcal U} \operatorname{curl}\bm w =\operatorname{curl}\bm \Pi_h^{\mathcal V} \bm w,
\end{equation}
which, together with  the boundedness of $\bm \Pi_h^{\mathcal V}$ \eqref{project:estimate:Hcurlbb}, yields
\begin{equation}\label{project:estimate:Hdivbb}
    \|\bm \Pi_h^{\mathcal{U}}\bm v\|_{K}\leq C(\|\bm v\|_K+h_K\|\bm v\|_{1,K})~~\text{for}~~\bm v\in [H^1(K)]^3.
\end{equation}
In addition, we define $\bm\pi^2_h$: $[H^{\frac{1}{2}+\delta}(\Omega)]^3\rightarrow \mathcal S_{r-1}^{2}(\mathcal T_h)$ with $\delta> 0$ based on DOFs (5) and (7). For $\bm v\in [H^{s}(K)]^3, \ s\geq \frac{1}{2}+\delta$, the estimate
\begin{align}
    \|\bm v-\bm\pi_h^{2}\bm v\|_{m,K}\leq Ch_K^{s-m}|\bm v|_{s,K}~~\text{ with }  0\leq m\leq s\leq r\label{project:estimate:Hdiv:1}
\end{align}
follows by the scaling argument.

Lemma \ref{direct:sum:div} and the definitions of 
$\bm\Pi_h^2$, $\bm\pi_h^2$, and $\bm\Pi_h^{\mathcal U}$ lead to 
\begin{align}\label{identity:2}
\bm\Pi_h^2=\bm\pi_h^2+\bm\Pi_h^{\mathcal U}(\bm I-\bm\pi_h^2).
\end{align}
The approximation property of $\bm \Pi_h^2$ can be derived by \eqref{project:estimate:Hdivbb}, \eqref{project:estimate:Hdiv:1}, and \eqref{identity:2}.
\begin{theorem}\label{err:interpolation:2h}
	Let $m$ be an integer and $s\geq 1$ satisfying $0\leq m\leq s\leq r$. Then for any $\bm v\in [H^s(K)]^3$, there holds
	\[\|\bm v-\bm\Pi^2_h\bm v\|_{m,K}\leq Ch_K^{s-m}|\bm v|_{s,K}.\]
\end{theorem}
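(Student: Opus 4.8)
The plan is to mimic the proof of Theorem~\ref{est-pi1}, exploiting the splitting \eqref{identity:2}. Writing $\bm v-\bm\Pi_h^2\bm v=(\bm I-\bm\Pi_h^{\mathcal U})(\bm v-\bm\pi_h^2\bm v)$ and applying the triangle inequality, we get
\[
\|\bm v-\bm\Pi_h^2\bm v\|_{m,K}\le \|\bm v-\bm\pi_h^2\bm v\|_{m,K}+\|\bm\Pi_h^{\mathcal U}(\bm v-\bm\pi_h^2\bm v)\|_{m,K}.
\]
The first term is immediately controlled by the standard polynomial approximation estimate \eqref{project:estimate:Hdiv:1}, which gives $\|\bm v-\bm\pi_h^2\bm v\|_{m,K}\le Ch_K^{s-m}|\bm v|_{s,K}$, so the whole matter reduces to bounding the bubble contribution $\|\bm\Pi_h^{\mathcal U}(\bm v-\bm\pi_h^2\bm v)\|_{m,K}$.

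For this term I would first note that $\bm\Pi_h^{\mathcal U}\bm w$ lies in $\mathcal U^{r-2}(K)=\curl\mathcal V^{r-2}(K)$, a space of polynomials of degree bounded independently of $h$ on the shape-regular cube $K$; hence a standard inverse inequality obtained by scaling to the reference cube yields $\|\bm\Pi_h^{\mathcal U}\bm w\|_{m,K}\le Ch_K^{-m}\|\bm\Pi_h^{\mathcal U}\bm w\|_{0,K}$ for any fixed $m$. Taking $\bm w=\bm v-\bm\pi_h^2\bm v\in[H^1(K)]^3$ (legitimate since $s\ge 1$) and using the boundedness \eqref{project:estimate:Hdivbb} we obtain
\[
\|\bm\Pi_h^{\mathcal U}(\bm v-\bm\pi_h^2\bm v)\|_{0,K}\le C\big(\|\bm v-\bm\pi_h^2\bm v\|_{0,K}+h_K\|\bm v-\bm\pi_h^2\bm v\|_{1,K}\big).
\]
Invoking \eqref{project:estimate:Hdiv:1} twice, once with $m=0$ and once with $m=1$ (both admissible because $1\le s\le r$), the right-hand side is bounded by $Ch_K^{s}|\bm v|_{s,K}$; combining with the inverse inequality gives $\|\bm\Pi_h^{\mathcal U}(\bm v-\bm\pi_h^2\bm v)\|_{m,K}\le Ch_K^{s-m}|\bm v|_{s,K}$, and adding the two contributions completes the proof.

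This is essentially routine interpolation-theory bookkeeping and I do not expect a genuine obstacle: the decomposition $\mathcal S_{r-1}^{+,2}(K)=\mathcal S_{r-1}^{2}(K)\oplus\mathcal U^{r-2}(K)$ of Lemma~\ref{direct:sum:div} and the commuting identity underlying \eqref{identity:2} are exactly what make the split clean. The only points requiring mild care are the simultaneous use of $s\ge 1$ and $s\le r$ so that \eqref{project:estimate:Hdiv:1} may be applied at orders $m=0$ and $m=1$, and the remark that the inverse-inequality constant for $\mathcal U^{r-2}(K)$ is $h$-independent since the polynomial degree is fixed and the mesh is shape-regular.
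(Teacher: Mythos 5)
Your proposal is correct and follows exactly the route the paper intends: the paper itself only remarks that the theorem "can be derived by \eqref{project:estimate:Hdivbb}, \eqref{project:estimate:Hdiv:1}, and \eqref{identity:2}," and your splitting $\bm v-\bm\Pi_h^2\bm v=(\bm I-\bm\Pi_h^{\mathcal U})(\bm v-\bm\pi_h^2\bm v)$ together with the inverse inequality on the fixed-degree bubble space $\mathcal U^{r-2}(K)$ is precisely the bookkeeping that remark leaves implicit. No gaps.
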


%%%%%%%%%%%%%%%%%%%%%%%%%%%%%%%%%%%%%%%
\subsection{The exact sequence}
In this subsection, we present the precise definition of the spaces $\mathcal S_{r}^{0}(\mathcal T_h)$ and $\mathcal S_{r-2}^{3}(\mathcal T_h)$, and prove the exactness of complex \eqref{nonconforming-FE-derham-complex}. 

For $K\in\mathcal T_h$, the shape function space of $\mathcal S_r^0(\mathcal T_h)$ is $\mathcal S_r^0(K)$, which has been defined in Section 2. 
The DOFs for $u\in \mathcal S_r^0(\mathcal T_h)$ \cite[(2.2)]{arnold2011serendipity} are 
\begin{enumerate}\setcounter{enumi}{7}
	\item function values $u(v)$ at all vertices $v\in\mathcal V_h$,
	\item moments $\int_e uq\d s,\ \forall q\in P_{r-2}(e)$ at all edges $e\in \mathcal E_h$,
	\item moments $\int_f uq\d A,\ \forall q\in P_{r-4}(f)$ at all faces $f\in \mathcal F_h$,  and
	\item moments $\int_K uq\d V,\ \forall q\in P_{r-6}(K)$ at all elements $K\in \mathcal T_h$.
\end{enumerate}
Define
\[\mathcal S_{r-2}^3(\mathcal T_h) = \{u\in L^2(\Omega):u|_K\in P_{r-2}(K)\}.\]
Let $\bm \pi_h^3:L^2(\Omega)\rightarrow\mathcal S_{r-2}^3(\mathcal T_h) $ denote the $L^2$ projection. 

We prove below that the spaces $\mathcal S_r^0(\mathcal T_h)$,  $\mathcal S_{r-1,r}^1(\mathcal T_h)$, $\mathcal S_{r-1}^2(\mathcal T_h)$, and $\mathcal S_{r-2}^3(\mathcal T_h)$ form an exact complex. To this end, we first present the following two lemmas.

\begin{lemma}\label{commu-23}
The projections $\bm \Pi_h^2$, $\bm \pi_h^2$ and $\bm \pi_h^3$ satisfy
\begin{align}   
   \operatorname{div}\bm\Pi^2_h \bm{v}=\operatorname{div}\bm\pi^2_h \bm{v}=\bm\pi^3_{h} \operatorname{div}\bm{v},\quad \text{for all }~~\bm{v}\in [H^{\frac{1}{2}+\delta}(\Omega)]^{3}.  \label{commute:a2}
\end{align}   
\end{lemma}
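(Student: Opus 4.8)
The plan is to prove the three-term identity in \eqref{commute:a2} by establishing the two equalities separately and then combining them. For the second equality, $\div\bm\pi_h^2\bm v = \bm\pi_h^3\div\bm v$, I would argue directly from the defining DOFs. Given $\bm v\in[H^{1/2+\delta}(\Omega)]^3$, both $\div\bm\pi_h^2\bm v$ and $\bm\pi_h^3\div\bm v$ lie in $\mathcal S_{r-2}^3(\mathcal T_h)$, so it suffices to test against an arbitrary $q\in P_{r-2}(K)$ on each element $K$. On one side, integration by parts gives $(\div\bm\pi_h^2\bm v, q)_K = -(\bm\pi_h^2\bm v, \grad q)_K + \langle \bm\pi_h^2\bm v\cdot\bm n, q\rangle_{\partial K}$; since $\grad q\in[P_{r-3}(K)]^3$ and $q|_f\in P_{r-1}(f)$, the DOFs (5) and (7) defining $\bm\pi_h^2$ let me replace $\bm\pi_h^2\bm v$ by $\bm v$ in both terms, and integrating by parts back yields $(\div\bm v,q)_K$. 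On the other side, $(\bm\pi_h^3\div\bm v,q)_K = (\div\bm v,q)_K$ since $\bm\pi_h^3$ is the $L^2$ projection onto $P_{r-2}(K)$. This is exactly the commutativity relation \eqref{commu1} already asserted in the excerpt for the conforming de Rham complex, so I would simply invoke it (or reproduce the two-line argument).

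For the first equality, $\div\bm\Pi_h^2\bm v = \div\bm\pi_h^2\bm v$, I would use the direct-sum decomposition $\bm\Pi_h^2 = \bm\pi_h^2 + \bm\Pi_h^{\mathcal U}(\bm I-\bm\pi_h^2)$ from \eqref{identity:2}. Thus $\div\bm\Pi_h^2\bm v - \div\bm\pi_h^2\bm v = \div\bm\Pi_h^{\mathcal U}(\bm v - \bm\pi_h^2\bm v)$, and the whole claim reduces to showing $\div$ annihilates the bubble enrichment space $\mathcal U^{r-2}(\mathcal T_h)$ — which is immediate because $\mathcal U^{r-2}(K) = \curl\mathcal V^{r-2}(K)$, so every function in it is a curl and hence divergence-free pointwise. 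I would state this as a one-line observation: for any $\bm z=\curl\bm w\in\mathcal U^{r-2}(K)$ we have $\div\bm z = \div\curl\bm w = 0$, so $\div\bm\Pi_h^{\mathcal U}\bm\phi = 0$ for all admissible $\bm\phi$, giving the first equality.

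Combining the two gives \eqref{commute:a2}. I do not anticipate a genuine obstacle here; the only point requiring a little care is the regularity hypothesis — one must check that $\bm\pi_h^2$ and $\bm\Pi_h^2$ are well-defined on $[H^{1/2+\delta}(\Omega)]^3$, i.e. that the face and interior moments in DOFs (5)--(7) make sense, which is precisely the content of the statements in the excerpt that $\bm\pi_h^2$ and $\bm\Pi_h^2$ are canonical interpolations defined on $[H^{1/2+\delta}(\Omega)]^3$. With that in hand the proof is essentially a two-line assembly: apply \eqref{identity:2}, note $\div\circ\curl = 0$ to kill the bubble correction, and invoke \eqref{commu1} (equivalently, the DOF-based integration-by-parts argument) for the remaining identity $\div\bm\pi_h^2\bm v = \bm\pi_h^3\div\bm v$.
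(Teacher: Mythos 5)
Your proposal is correct and follows essentially the same route as the paper: the paper also uses the decomposition \eqref{identity:2} together with the fact that the bubble enrichment $\mathcal U^{r-2}=\curl\mathcal V^{r-2}$ is divergence-free to get $\div\bm\Pi_h^2\bm v=\div\bm\pi_h^2\bm v$, and then establishes the remaining identity with $\bm\pi_h^3$ by the same element-wise integration-by-parts argument against $q\in P_{r-2}(K)$ using DOFs (5) and (7). The only cosmetic difference is that the paper runs the integration-by-parts step with $\bm\Pi_h^2$ rather than $\bm\pi_h^2$, which is equivalent once the first equality is in hand.
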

\begin{proof}
The identity \eqref{identity:2} shows $\operatorname{div}\bm\Pi^2_h \bm{v}=\operatorname{div}\bm\pi^2_h \bm{v}$. It suffices to prove the commuting property $\operatorname{div}\bm\Pi^2_h \bm{v}=\bm\pi^3_{h} \operatorname{div}\bm{v}$. Given $\bm v \in[H^{\frac{1}{2}+\delta}(\Omega)]^{3}$, we have $\operatorname{div}\bm \Pi_h^2 \bm v - \bm \pi_h^3\operatorname{div}\bm v \in \mathcal S_{r-2}^3(\mathcal T_h)$. Then for any $K\in\mathcal T_h$ and $q\in  P_{r-2}(K)$, we apply integration by parts and the definitions of $\bm \pi_h^3$ and $\bm \Pi_h^2$ to get
\begin{align*}
    &(\operatorname{div}\bm \Pi_h^2 \bm v - \bm \pi_h^3\operatorname{div}\bm v, q)_K= \sum_{f\in \mathcal F_h(K)}\langle\bm \Pi_h^2 \bm v\cdot \bm n_{f,K}, q\rangle_{f}- (\bm \Pi_h^2 \bm v, \operatorname{grad}q)_K - (\operatorname{div}\bm v, q)_K\\
     =&\sum_{f\in \mathcal F_h(K)}\langle \bm v\cdot \bm n_{f,K}, q\rangle_{f}- ( \bm v, \operatorname{grad}q)_K - (\operatorname{div}\bm v, q)_K= 0.
\end{align*}
Thus \eqref{commute:a2} follows.
\end{proof}
Lemma \ref{commu-23} implies the following crucial result. 
\begin{lemma}\label{onto:2:3}
For $r\geq 2$, there holds
\begin{equation}
    \operatorname{div}\mathcal S_{r-1}^{+,2}(\mathcal T_h) = \operatorname{div}\mathcal S_{r-1}^2(\mathcal T_h) = \mathcal S_{r-2}^3(\mathcal T_h).
\end{equation}
\end{lemma}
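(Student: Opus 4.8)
The plan is to establish the two claimed equalities by a chain of inclusions, leaning on the commuting diagram property just proved in Lemma~\ref{commu-23} and on the exactness of the conforming de~Rham complex \eqref{FE-derham-complex} (whose surjectivity statement $\div \mathcal S_{r-1}^2(\mathcal T_h) = \mathcal S_{r-2}^3(\mathcal T_h)$ I will take as given, since $\mathcal S_{r-1}^2(\mathcal T_h)$ is the Arnold--Awanou $H(\div)$ space from \cite{arnold2014finite} and $\mathcal S_{r-2}^3(\mathcal T_h)$ is the full piecewise-$P_{r-2}$ space). First I would note the trivial inclusions: since $\mathcal S_{r-1}^2(\mathcal T_h)\subseteq \mathcal S_{r-1}^{+,2}(\mathcal T_h)$ by construction (Lemma~\ref{direct:sum:div} at the element level, glued globally), we get $\div \mathcal S_{r-1}^2(\mathcal T_h)\subseteq \div \mathcal S_{r-1}^{+,2}(\mathcal T_h)$. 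Conversely, any $\bm v\in \mathcal S_{r-1}^{+,2}(\mathcal T_h)$ splits elementwise as $\bm v|_K = \bm v_0|_K + \bm v_b|_K$ with $\bm v_b|_K\in \mathcal U^{r-2}(K)=\curl \mathcal V^{r-2}(K)$, and since $\div\curl = 0$ we have $\div \bm v|_K = \div \bm v_0|_K$ on each $K$; but I must be slightly careful that $\div$ here is the elementwise operator and that the bubble part contributes nothing even in a distributional sense — this is fine because $\mathcal V^{r-2}(K)\subset [H^1_0(K)]^3$ so $\bm v_b$ is genuinely $H^1$-conforming with vanishing trace and $\div\bm v_b$ agrees with the elementwise divergence. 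Hence $\div \mathcal S_{r-1}^{+,2}(\mathcal T_h)\subseteq \div \mathcal S_{r-1}^2(\mathcal T_h)$, giving the first equality.

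For the second equality $\div \mathcal S_{r-1}^2(\mathcal T_h) = \mathcal S_{r-2}^3(\mathcal T_h)$, the inclusion $\subseteq$ is immediate since $\bm v|_K\in \mathcal S_{r-1}^2(K)$ has $\div \bm v|_K\in P_{r-2}(K)$. For $\supseteq$, I would take any $q\in \mathcal S_{r-2}^3(\mathcal T_h)\subset L^2(\Omega)$ and invoke the classical result (as in the commented-out Lemma, e.g.\ \cite[Corollary~2.4]{Girault2012Finite}) that there exists $\bm w\in [H^1(\Omega)]^3$ — in fact in $[H^1_0(\Omega)]^3$ after adjusting for the mean value, though for the non-homogeneous statement here one only needs $[H^1(\Omega)]^3\subset [H^{1/2+\delta}(\Omega)]^3$ — with $\div \bm w = q$. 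Then by Lemma~\ref{commu-23}, $q = \bm\pi_h^3 q = \bm\pi_h^3 \div\bm w = \div \bm\pi_h^2\bm w \in \div \mathcal S_{r-1}^2(\mathcal T_h)$, using that $\bm\pi_h^3$ is the $L^2$-projection onto $\mathcal S_{r-2}^3(\mathcal T_h)$ and is therefore the identity on $q$. This closes the argument.

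I do not expect any serious obstacle here; the lemma is essentially a bookkeeping consequence of Lemma~\ref{commu-23} together with the surjectivity of $\div$ from $[H^1]^3$ onto $L^2$. The one point that requires a line of care is the direction $\div \mathcal S_{r-1}^{+,2}(\mathcal T_h)\subseteq \div \mathcal S_{r-1}^2(\mathcal T_h)$: I must confirm that the global split $\bm v = \bm v_0 + \bm v_b$ with $\bm v_0\in \mathcal S_{r-1}^2(\mathcal T_h)$ (i.e.\ genuinely $H(\div)$-conforming, not merely piecewise in $\mathcal S_{r-1}^2(K)$) is legitimate — this follows because the bubble component $\bm v_b$ lives in the global space $\mathcal U^{r-2}(\mathcal T_h)\subset H_0(\div;\Omega)$ defined in \eqref{global:s2} (its normal trace vanishes on every face by Lemma~\ref{curlbubble-property}), so $\bm v_0 = \bm v - \bm v_b\in H(\div;\Omega)$ with $\bm v_0|_K\in \mathcal S_{r-1}^2(K)$, hence $\bm v_0\in \mathcal S_{r-1}^2(\mathcal T_h)$ by Remark~\ref{unisolvence-div}. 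With that observation in place, all inclusions are routine and the two equalities follow.
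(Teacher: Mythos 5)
Your proof is correct, and for the equality $\div\mathcal S_{r-1}^2(\mathcal T_h)=\mathcal S_{r-2}^3(\mathcal T_h)$ it is the same argument as the paper's: lift $q_h$ to $\bm w\in[H^1(\Omega)]^3$ with $\div\bm w=q_h$ and push through the commuting property of Lemma~\ref{commu-23}. Where you genuinely differ is the first equality. The paper never compares $\div\mathcal S_{r-1}^{+,2}(\mathcal T_h)$ and $\div\mathcal S_{r-1}^{2}(\mathcal T_h)$ with each other; it simply runs the lifting argument twice, once with $\bm\pi_h^2$ and once with $\bm\Pi_h^2$, which works because Lemma~\ref{commu-23} already asserts $\div\bm\Pi_h^2\bm v=\div\bm\pi_h^2\bm v=\bm\pi_h^3\div\bm v$. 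You instead prove $\div\mathcal S_{r-1}^{+,2}(\mathcal T_h)=\div\mathcal S_{r-1}^{2}(\mathcal T_h)$ head-on by splitting $\bm v=\bm v_0+\bm v_b$ and observing that the enrichment $\mathcal U^{r-2}=\curl\mathcal V^{r-2}$ is divergence-free. The two routes carry essentially the same content --- the identity $\div\bm\Pi_h^2=\div\bm\pi_h^2$ in Lemma~\ref{commu-23} is itself a consequence of $\div\mathcal U^{r-2}=0$ via \eqref{identity:2} --- but yours makes the structural reason explicit (the bubble enrichment cannot alter the divergence image) and is slightly more self-contained. Your worry about the legitimacy of the \emph{global} splitting is exactly the right point, and your resolution via Lemma~\ref{curlbubble-property}, the space \eqref{global:s2}, and Remark~\ref{unisolvence-div} is correct.

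One phrasing needs repair, though it does not create a gap: mid-argument you assert that ``$\bm v_b$ is genuinely $H^1$-conforming with vanishing trace'' because $\mathcal V^{r-2}(K)\subset[H_0^1(K)]^3$. That is not true of $\bm v_b$ itself --- it is the potential $\bm z$ with $\bm v_b=\curl\bm z$ that lies in $[H_0^1(\Omega)]^3$; the function $\bm v_b$ has nonvanishing tangential traces on element faces (Lemma~\ref{bubble-property:1}) and is only weakly tangentially continuous. What you actually need, and what you correctly supply in your closing paragraph, is that $\bm v_b$ has vanishing normal traces (Lemma~\ref{curlbubble-property}), hence lies in $H_0(\div;\Omega)$ with distributional divergence equal to the elementwise one, namely $\div\bm v_b=0$. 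With that justification in place the argument stands.
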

\begin{proof}
It suffices to prove $\mathcal S_{r-2}^3(\mathcal T_h)\subset \operatorname{div}\mathcal S_{r-1}^{+,2}(\mathcal T_h)$ and $\mathcal S_{r-2}^3(\mathcal T_h)\subset \operatorname{div}\mathcal S_{r-1}^{2}(\mathcal T_h)$ since the opposite inclusion is trivial. 
From the exactness of the continuous Stokes complex \eqref{stokes-complex}, for any $q_h\in {\mathcal S}_{r-2}^3(\mathcal T_h)\subset L^2(\Omega)$, there exists $\bm w\in  [H^1(\Omega)]^3$ satisfying $\div\bm w= q_h$. By \eqref{commute:a2}, we have $q_h=\bm\pi_h^{3} q_h=\bm\pi_h^{3}\div \bm w=\div\bm\pi_h^2 \bm w\in \div {\mathcal S}_{r-1}^2(\mathcal T_h)$, and similarly $q_h=\div\bm\Pi_h^2 \bm w\in \div {\mathcal S}_{r-1}^{+,2}(\mathcal T_h)$.
\end{proof}

\iffalse
And define the following global spaces:
\begin{align*}
&\mathring{\mathcal S}_{r}^{0}(\mathcal T_h)=\mathcal S_{r}^{0}(\mathcal T_h)\cap H_0^1(\Omega),&& \mathring{\mathcal S}^{1}_{r-1,r}(\mathcal T_h)=\mathcal S_{r-1,r}^1(\mathcal T_h)\cap H_0(\operatorname{curl};\Omega),\\
&\mathring{\mathcal S}^{2}_{r-1}(\mathcal T_h)=\mathcal S_{r-1}^2(\mathcal T_h)\cap H_0(\operatorname{div};\Omega),&&
\mathring{\mathcal S}_{r-2}^{3}(\mathcal T_h) = {\mathcal S}_{r-2}^{3}(\mathcal T_h)\cap L_0^2(\Omega).
\end{align*}
\fi

\begin{lemma}\label{exactness}
For $r\geq 2$, the finite element sequence in \eqref{FE-derham-complex} 
% \begin{equation}\label{FE-derham-complex-0}
% \begin{tikzcd}
%  \mathbb{R}\arrow{r}{\subset}& {\mathcal S}_r^0(\mathcal T_h) \arrow{r}{\grad} & {\mathcal S}_{r-1,r}^1(\mathcal T_h)\arrow{r}{\curl} &  {\mathcal S}_{r-1}^2(\mathcal T_h) \arrow{r}{\div} & {\mathcal S}_{r-2}^3(\mathcal T_h) \arrow{r}{} & 0
%  \end{tikzcd}
% \end{equation}
is an exact complex when $\Omega$ is contractible. 
\end{lemma}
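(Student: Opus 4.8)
The plan is to establish exactness of \eqref{FE-derham-complex} at each of its terms, using that it is a complex of $H^1$-, $H(\curl)$-, $H(\div)$- and $L^2$-conforming spaces and that $\Omega$ is contractible. First I would record that \eqref{FE-derham-complex} is a complex: the inclusions $\grad\mathcal S_r^0(K)\subseteq\mathcal S_{r-1,r}^1(K)$ and $\div\mathcal S_{r-1}^2(K)\subseteq P_{r-2}(K)$ are immediate (the latter since $\div\curl=0$), and $\curl\mathcal S_{r-1,r}^1(K)\subseteq\mathcal S_{r-1}^2(K)$ follows from \eqref{s1:onK}--\eqref{def:s2} because $\curl\grad=0$, a one-line computation gives $\curl(\bm p\times\bm x)\in[P_{r-1}(K)]^3$ for $\bm p\in[P_{r-1}(K)]^3$, and the curl of the third summand of \eqref{s1:onK} is by definition the second summand of \eqref{def:s2}; patching these local statements into the global conforming spaces and using $d\circ d=0$ for the continuous operators shows that \eqref{FE-derham-complex} is a complex. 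Exactness at $\mathcal S_r^0(\mathcal T_h)$ is trivial since $\grad u=0$ forces $u$ constant on the connected domain $\Omega$ and constants lie in $\mathcal S_r^0(\mathcal T_h)$, and exactness at $\mathcal S_{r-2}^3(\mathcal T_h)$ (i.e.\ surjectivity of $\div$) is exactly Lemma~\ref{onto:2:3}.

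For exactness at $\mathcal S_{r-1,r}^1(\mathcal T_h)$ I would run the standard argument for $1$-forms: if $\bm v\in\mathcal S_{r-1,r}^1(\mathcal T_h)$ with $\curl\bm v=0$, then by contractibility $\bm v=\grad u$ for some $u\in H^1(\Omega)$, and since $\bm v$ is a bounded piecewise polynomial, $u$ is Lipschitz; on each $K$ one has $\grad(u|_K)=\bm v|_K\in\mathcal S_{r-1,r}^1(K)$, so the local $1$-form exactness $\ker(\curl)\cap\mathcal S_{r-1,r}^1(K)=\grad\mathcal S_r^0(K)$ from \cite[Theorem~3.5]{gillette2019trimmed} gives $u|_K\in\mathcal S_r^0(K)$ (constants being in $\mathcal S_r^0(K)$); as $u\in H^1(\Omega)$ this means $u\in\mathcal S_r^0(\mathcal T_h)$ and $\bm v\in\grad\mathcal S_r^0(\mathcal T_h)$.

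The crux is exactness at $\mathcal S_{r-1}^2(\mathcal T_h)$, for which I see two routes. The direct route is a dimension count: the exactness already obtained at $\mathcal S_r^0(\mathcal T_h)$, $\mathcal S_{r-1,r}^1(\mathcal T_h)$ and $\mathcal S_{r-2}^3(\mathcal T_h)$ together with rank--nullity gives $\dim\curl\mathcal S_{r-1,r}^1(\mathcal T_h)=\dim\mathcal S_{r-1,r}^1(\mathcal T_h)-\dim\mathcal S_r^0(\mathcal T_h)+1$ and $\dim\bigl(\ker(\div)\cap\mathcal S_{r-1}^2(\mathcal T_h)\bigr)=\dim\mathcal S_{r-1}^2(\mathcal T_h)-\dim\mathcal S_{r-2}^3(\mathcal T_h)$, so the already established inclusion $\curl\mathcal S_{r-1,r}^1(\mathcal T_h)\subseteq\ker(\div)\cap\mathcal S_{r-1}^2(\mathcal T_h)$ is an equality precisely when
\[
\dim\mathcal S_r^0(\mathcal T_h)-\dim\mathcal S_{r-1,r}^1(\mathcal T_h)+\dim\mathcal S_{r-1}^2(\mathcal T_h)-\dim\mathcal S_{r-2}^3(\mathcal T_h)=1 .
\]
I would verify this Euler-type identity by writing each global dimension as the sum of the vertex-, edge-, face- and cell-DOF contributions from the DOF lists in Section~3 (equivalently, from \cite{arnold2011serendipity,gillette2019trimmed,arnold2014finite}) and invoking $\#\mathcal V_h-\#\mathcal E_h+\#\mathcal F_h-\#\mathcal T_h=1$, valid on a contractible cubical mesh. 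Alternatively, mirroring the proof of Lemma~\ref{onto:2:3}, one can use the commuting relation $\curl\bm\pi_h^1=\bm\pi_h^2\curl$ --- which holds because the face and interior DOFs of $\mathcal S_{r-1,r}^1$ were chosen so that $\grad_f P_{r-1}(f)\subseteq[P_{r-3}(f)]^2\oplus\grad_f\widetilde{P}_{r-1}(f)$ and $\curl[P_{r-3}(K)]^3\subseteq[P_{r-5}(K)]^3\oplus\curl[\widetilde{P}_{r-3}(K)]^3$ --- so that, given $\bm v\in\mathcal S_{r-1}^2(\mathcal T_h)$ with $\div\bm v=0$, exactness of \eqref{stokes-complex} supplies $\bm w\in[H^1(\Omega)]^3$ with $\curl\bm w=\bm v$ and then $\bm v=\bm\pi_h^2\bm v=\bm\pi_h^2\curl\bm w=\curl\bm\pi_h^1\bm w\in\curl\mathcal S_{r-1,r}^1(\mathcal T_h)$.

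I expect the main obstacle to be precisely this last term: in the first route it is the routine but lengthy DOF bookkeeping together with the mesh Euler identity, and in the second route it is the careful verification of $\curl\bm\pi_h^1=\bm\pi_h^2\curl$ directly from the DOFs. The remaining point --- that the potentials $u$ and $\bm w$ must be regular enough to lie in the domains of the interpolation operators --- causes no trouble here, since the functions being lifted are piecewise polynomials, so $u$ can be taken Lipschitz and $\bm w$ in $[H^1(\Omega)]^3$.
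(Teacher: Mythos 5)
Your proposal is correct and follows essentially the same route as the paper: exactness at $\mathcal S_r^0(\mathcal T_h)$ is trivial, exactness at $\mathcal S_{r-2}^3(\mathcal T_h)$ is Lemma \ref{onto:2:3}, exactness at $\mathcal S_{r-1,r}^1(\mathcal T_h)$ rests on \cite[Theorem 3.5]{gillette2019trimmed}, and exactness at $\mathcal S_{r-1}^2(\mathcal T_h)$ is settled by the inclusion $\curl\mathcal S_{r-1,r}^1(\mathcal T_h)\subseteq\ker(\div)\cap\mathcal S_{r-1}^2(\mathcal T_h)$ together with the dimension count that reduces, via the vertex/edge/face/cell DOF bookkeeping, to the Euler relation for a contractible cubical mesh --- exactly the paper's argument. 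Your alternative second route via the commuting identity $\curl\bm\pi_h^1=\bm\pi_h^2\curl$ is not needed and would additionally require verifying that the potential $\bm w$ (and the piecewise-polynomial, merely $H(\div)$-conforming field $\curl\bm w$) is regular enough to sit in the domain of $\bm\pi_h^1$, whose edge DOFs demand more than $[H^1]^3$ regularity; the dimension-count route you give first is complete and matches the paper.
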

\begin{proof}
We show the exactness at each space. 
The exactness at ${\mathcal S}_{r}^0(\mathcal T_h)$ is trivial as the kernel of the operator $\operatorname{grad}$ only consists of constant functions. The exactness at ${\mathcal S}_{r-1,r}^1(\mathcal T_h)$ follows from \cite[Theorem 3.5]{gillette2019trimmed}. Besides, Lemma \ref{onto:2:3} shows the exactness at ${\mathcal S}_{r-2}^3(\mathcal T_h)$. To show the exactness at ${\mathcal S}_{r-1}^1(\mathcal T_h)$, it suffices to prove $\operatorname{ker}(\operatorname{div})\cap \mathcal S_{r-1}^2(\mathcal T_h)=\operatorname{curl} \mathcal{S}_{r-1,r}^1(\mathcal T_h)$. It is easy to verify 
\begin{equation*}
    \operatorname{curl} \mathcal{S}_{r-1,r}^1(\mathcal T_h)\subseteq \operatorname{ker}(\operatorname{div})\cap \mathcal S_{r-1}^2(\mathcal T_h).
\end{equation*}
To prove $\operatorname{ker}(\operatorname{div})\cap \mathcal S_{r-1}^2(\mathcal T_h)=\operatorname{curl} \mathcal{S}_{r-1,r}^1(\mathcal T_h)$, we only need to show the dimensions of the two spaces are equal. 
Let $\mathcal V_0$, $\mathcal E_0$, $\mathcal F_0$, and $\mathcal K_0$ denote the number of all vertices, edges, faces, and 3D cells. We have
	 \begin{align*}
	 	\dim{\mathcal S}_r^0(\mathcal T_h) &= \mathcal V_0+(r-1)\mathcal E_0+\frac{(r-3)(r-2)}{2}\mathcal F_0+\frac{(r-5)(r-4)(r-3)}{6}\mathcal K_0,\\
	 	\dim{\mathcal S}_{r-1,r}^1(\mathcal T_h)& = r\mathcal E_0+{(r^2 - 2r + 2)}\mathcal F_0+\frac{r^3 - 7r^2 + 18r - 18}{2}\mathcal K_0,\\
	 	\dim{\mathcal S}_{r-1}^2(\mathcal T_h) &= \frac{(r+1)r}{2}\mathcal F_0+\frac{(r-1)(r-2)r}{2}\mathcal K_0,\\
	 	\dim{\mathcal S}_{r-2}^3(\mathcal T_h)& = \frac{(r+1)(r-1)r}{6}\mathcal K_0.
	 \end{align*}
Then the dimensions of $\operatorname{curl} \mathcal{S}_{r-1,r}^1(\mathcal T_h)$ and $\operatorname{ker}(\operatorname{div})\cap \mathcal S_{r-1}^2(\mathcal T_h)$ read
\begin{align*}
 \operatorname{dim}\operatorname{curl} \mathcal{S}_{r-1,r}^1(\mathcal T_h) &= \operatorname{dim}\mathcal{S}_{r-1,r}^1(\mathcal T_h) - \operatorname{dim} \mathcal S_r^0(\mathcal T_h)+ 1\\
 &= -\mathcal V_0+\mathcal E_0+(\frac{1}{2}r^2+\frac{1}{2}r-1)\mathcal F_0 +(\frac{1}{3}r^3-\frac{3}{2}r^3+\frac{7}{6}r+1) \mathcal K_0+1,\\
\operatorname{dim}\operatorname{ker}(\operatorname{div})\cap \mathcal S_{r-1}^2(\mathcal T_h) &= \operatorname{dim}\mathcal S_{r-1}^2(\mathcal T_h)-\operatorname{dim} S^3_{r-2}(\mathcal T_h)\\
&= (\frac{1}{2}r^2+\frac{1}{2}r)\mathcal F_0 +(\frac{1}{3}r^3-\frac{3}{2}r^3+\frac{7}{6}r+1) \mathcal K_0,
\end{align*}
which, together with Euler's formula $\mathcal V_0-\mathcal E_0+\mathcal F_0-\mathcal K_0+1=0$, yields $\operatorname{dim}\operatorname{curl} \mathcal{S}_{r-1,r}^1(\mathcal T_h)=\operatorname{dim}\operatorname{ker}(\operatorname{div})\cap \mathcal S_{r-1}^2(\mathcal T_h)$.
\end{proof}

\iffalse
\begin{align*}
&\mathring{\mathcal S}_{r}^{0}(\mathcal T_h)=\{\bm u\in {\mathcal S}_{r}^{0}(\mathcal T_h): \text{the boundary DOFs (8)--(10) vanish}\},\\
	&\mathring{\mathcal S}_{r-1,r}^{+,1}(\mathcal T_h)=\{\bm u\in {\mathcal S}_{r-1,r}^{+,1}(\mathcal T_h): \text{the boundary DOFs (1)--(3) vanish}\},\\
	&\mathring{\mathcal S}_{r-1}^{+,2}(\mathcal T_h)=\{\bm u\in {\mathcal S}_{r-1}^{+,2}(\mathcal T_h): \text{the boundary DOFs of (5)--(6) vanish}\},\\
 &\mathring{\mathcal S}_{r-2}^{3}(\mathcal T_h) = {\mathcal S}_{r-2}^{3}(\mathcal T_h)\cap L_0^2(\Omega).
\end{align*}
\fi 

We prove the exactness of the nonconforming finite element complex \eqref{nonconforming-FE-derham-complex} below.

\begin{theorem}\label{exactness1}
For $r\geq 2$, the nonconforming finite element complex \eqref{nonconforming-FE-derham-complex}
% \begin{equation}\label{FE-derham-complex-0}
% \begin{tikzcd}
%  \mathbb{R}\arrow{r}{\subset}& {\mathcal S}_r^0(\mathcal T_h) \arrow{r}{\grad} & {\mathcal S}_{r-1,r}^{+,1}(\mathcal T_h)\arrow{r}{\curl} &  {\mathcal S}_{r-1}^{+,2}(\mathcal T_h) \arrow{r}{\div} & {\mathcal S}_{r-2}^3(\mathcal T_h) \arrow{r}{} & 0
%  \end{tikzcd}
% \end{equation}
is an exact complex when $\Omega$ is contractible. 
\end{theorem}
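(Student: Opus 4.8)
The plan is to establish exactness of \eqref{nonconforming-FE-derham-complex} at each of its four nontrivial spots, leveraging the already-proven exactness of the conforming complex \eqref{FE-derham-complex} (Lemma \ref{exactness}) together with the direct-sum decompositions of Lemmas \ref{direct:sum:curl} and \ref{direct:sum:div} and the commuting/bubble properties collected in the previous subsections. Exactness at $\mathcal S_r^0(\mathcal T_h)$ is immediate since the kernel of $\grad$ is the constants. Exactness at $\mathcal S_{r-2}^3(\mathcal T_h)$ is exactly Lemma \ref{onto:2:3}, which already gives $\div\mathcal S_{r-1}^{+,2}(\mathcal T_h)=\mathcal S_{r-2}^3(\mathcal T_h)$. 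So the real work is at the two middle spaces, and I would organize it around the decompositions $\mathcal S_{r-1,r}^{+,1}(\mathcal T_h)=\mathcal S_{r-1,r}^1(\mathcal T_h)\oplus\mathcal V^{r-2}(\mathcal T_h)$ (at the global level, respecting the extra face-moment continuity) and $\mathcal S_{r-1}^{+,2}(\mathcal T_h)=\mathcal S_{r-1}^2(\mathcal T_h)\oplus\mathcal U^{r-2}(\mathcal T_h)$ with $\mathcal U^{r-2}=\curl_h\mathcal V^{r-2}$.

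For exactness at $\mathcal S_{r-1,r}^{+,1}(\mathcal T_h)$, i.e.\ $\ker(\curl_h)\cap\mathcal S_{r-1,r}^{+,1}(\mathcal T_h)=\grad\mathcal S_r^0(\mathcal T_h)$: the inclusion $\supseteq$ is clear. For $\subseteq$, take $\bm v\in\mathcal S_{r-1,r}^{+,1}(\mathcal T_h)$ with $\curl_h\bm v=0$ and write $\bm v=\bm v_0+\bm v_b$ with $\bm v_0\in\mathcal S_{r-1,r}^1(\mathcal T_h)$, $\bm v_b|_K\in\mathcal V^{r-2}(K)$; since $\curl_h\bm v_b$ is determined elementwise and, by Remark \ref{curl:norm:remark}, $\|\curl\cdot\|_K$ is a norm on $\mathcal V^{r-2}(K)$, the equation $\curl(\bm v_0+\bm v_b)=0$ on each $K$ forces $\bm v_b=0$ on $K$ once we know $\curl\bm v_0=0$; but a priori we only know the sum vanishes. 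The cleaner route is: on each $K$, $\curl\bm v_0=-\curl\bm v_b$; the left side lies in $\curl\mathcal S_{r-1,r}^1(K)\subseteq\mathcal S_{r-1}^2(K)$ and the right in $\mathcal U^{r-2}(K)$, and by Lemma \ref{direct:sum:div} this intersection is trivial, so $\curl\bm v_0=0$ and $\curl\bm v_b=0$ on every $K$, whence $\bm v_b=0$ by Lemma \ref{dim:bubble:V} (the kernel of $\curl$ on $\mathcal V^{r-2}(K)$ is trivial). Then $\bm v=\bm v_0\in\mathcal S_{r-1,r}^1(\mathcal T_h)$ with $\curl\bm v_0=0$, so the already-established exactness of \eqref{FE-derham-complex} at $\mathcal S_{r-1,r}^1(\mathcal T_h)$ gives $\bm v_0\in\grad\mathcal S_r^0(\mathcal T_h)$.

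For exactness at $\mathcal S_{r-1}^{+,2}(\mathcal T_h)$, i.e.\ $\ker(\div_h)\cap\mathcal S_{r-1}^{+,2}(\mathcal T_h)=\curl_h\mathcal S_{r-1,r}^{+,1}(\mathcal T_h)$: again $\supseteq$ is clear (and one should note $\curl_h$ indeed maps into $\mathcal S_{r-1}^{+,2}(\mathcal T_h)$, in particular that the face-tangential jump condition \eqref{regularity:div} holds for $\curl_h\bm v$ when $\bm v$ satisfies \eqref{regularity:curl} --- this follows because the DOF (3) continuity built into \eqref{global:curl:plus} is precisely DOF (6) applied to the curl). For $\subseteq$, take $\bm w\in\mathcal S_{r-1}^{+,2}(\mathcal T_h)$ with $\div_h\bm w=0$ and split $\bm w=\bm w_0+\bm w_b$, $\bm w_0\in\mathcal S_{r-1}^2(\mathcal T_h)$, $\bm w_b|_K\in\mathcal U^{r-2}(K)$. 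On each $K$, $\bm w_b=\curl\bm z_K$ with $\bm z_K\in\mathcal V^{r-2}(K)$; then $\div\bm w_b=0$ automatically, so $\div\bm w_0=0$ on each $K$, and since $\bm w_0\in H(\div;\Omega)$ this means $\bm w_0\in\ker(\div)\cap\mathcal S_{r-1}^2(\mathcal T_h)=\curl\mathcal S_{r-1,r}^1(\mathcal T_h)$ by the exactness of \eqref{FE-derham-complex}; write $\bm w_0=\curl\bm v_0$ with $\bm v_0\in\mathcal S_{r-1,r}^1(\mathcal T_h)$. Assembling the local $\bm z_K$ into a function $\bm z_b\in\mathcal V^{r-2}(\mathcal T_h)\subseteq[H_0^1(\Omega)]^3$ (uniquely determined by DOF (3), since by Lemma \ref{bubble-property:1} the curl's face-tangential trace determines the bubble and the jump of $\curl\bm w_b\times\bm n_f$ equals the jump of $\bm w_b\times\bm n_f$... here one uses that $\bm w=\curl_h$-image forces the relevant moments), we get $\bm v:=\bm v_0+\bm z_b\in\mathcal S_{r-1,r}^{+,1}(\mathcal T_h)$ --- the membership in the global space requires checking $\bm v\in H(\curl;\Omega)$ and the jump condition \eqref{regularity:curl}, which follows since $\curl_h\bm v=\bm w_0+\bm w_b=\bm w$ and $\bm w\in H(\div;\Omega)$ satisfies \eqref{regularity:div}. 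Then $\curl_h\bm v=\bm w$ as desired.

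The main obstacle is the bookkeeping at the global (rather than elementwise) level: one must be careful that the decompositions $\mathcal S_{r-1,r}^{+,1}(\mathcal T_h)=\mathcal S_{r-1,r}^1(\mathcal T_h)\oplus\mathcal V^{r-2}(\mathcal T_h)$ and $\mathcal S_{r-1}^{+,2}(\mathcal T_h)=\mathcal S_{r-1}^2(\mathcal T_h)\oplus\mathcal U^{r-2}(\mathcal T_h)$ hold with the \emph{global} bubble spaces \eqref{global:s1}, \eqref{global:s2} (which carry homogeneous boundary conditions and hence automatically satisfy the extra continuity), and that when we reassemble a local curl-preimage into a global one the jump conditions \eqref{regularity:curl}–\eqref{regularity:div} are respected --- this is where the commuting property \eqref{commute:U:V} $\bm\Pi_h^{\mathcal U}\curl\bm w=\curl\bm\Pi_h^{\mathcal V}\bm w$ does the decisive work, tying the DOF (6) of $\bm w_b$ to the DOF (3) of its potential. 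A short dimension count (as in Lemma \ref{exactness}, now with the bubble contributions $6r(r-1)$ per cell added to both $\mathcal S_{r-1,r}^1$ and $\mathcal S_{r-1}^2$ counts, which cancel in the alternating sum) can serve as an independent check that exactness at $\mathcal S_{r-1,r}^{+,1}(\mathcal T_h)$ holds, closing any gap left by the direct argument.
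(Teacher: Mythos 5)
Your proposal is correct, and for three of the four spots (exactness at $\mathcal S_r^0(\mathcal T_h)$, at $\mathcal S_{r-1}^{+,2}(\mathcal T_h)$, and at $\mathcal S_{r-2}^3(\mathcal T_h)$) it coincides with the paper's proof: the paper likewise invokes Lemma~\ref{onto:2:3} for the last spot and, for the middle $H(\div)$ spot, splits $\bm w_h=\bm w_0+\curl\bm v_1$ with $\bm w_0\in\mathcal S_{r-1}^2(\mathcal T_h)$ and $\bm v_1\in\mathcal V^{r-2}(\mathcal T_h)$, pulls back $\bm w_0$ through the conforming complex \eqref{FE-derham-complex}, and transfers the jump condition from $\bm w_h$ to $\curl\bm v_h$ exactly as you do. The genuine divergence is at $\mathcal S_{r-1,r}^{+,1}(\mathcal T_h)$: the paper settles this by a dimension count, writing $\dim\ker(\curl)\cap\mathcal S_{r-1,r}^{+,1}=\dim\mathcal S_{r-1,r}^{+,1}-\dim\mathcal S_{r-1}^{+,2}+\dim\mathcal S_{r-2}^3$ (which uses the just-proved exactness at $\mathcal S_{r-1}^{+,2}$), cancelling the equal bubble dimensions from Lemma~\ref{dim:bubble:V} and then citing Lemma~\ref{exactness} --- essentially the ``independent check'' you sketch at the end. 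Your primary argument is instead direct: from $\curl\bm v_0=-\curl\bm v_b$ elementwise, the triviality of $\mathcal S_{r-1}^2(K)\cap\mathcal U^{r-2}(K)$ (Lemma~\ref{direct:sum:div}) forces both curls to vanish, injectivity of $\curl$ on $\mathcal V^{r-2}(K)$ kills $\bm v_b$, and the conforming exactness finishes. This is a sound alternative that buys transparency --- it exhibits the kernel element explicitly as a conforming gradient rather than inferring equality of subspaces from equality of dimensions --- at the cost of being slightly longer; the paper's count is shorter but leans on the global Euler-formula bookkeeping already done in Lemma~\ref{exactness}. Your caution about lifting the elementwise decompositions to global ones is well placed and is handled correctly (the bubbles have vanishing traces, so the conforming parts reassemble into $H(\curl)$ and $H(\div)$ respectively); the paper passes over this silently.
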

\begin{proof}
It is trivial to show the exactness at ${\mathcal S}_r^0(\mathcal T_h)$. 
The exactness at $\mathcal S_{r-2}^3(\mathcal T_h)$ follows from Lemma \ref{onto:2:3}. To show the exactness at $\mathcal S_{r-1}^{+,2}(\mathcal T_h)$, for $\bm w_h\in {\mathcal S}_{r-1}^{+,2}(\mathcal T_h)$ with $\div \bm w_h = 0$, it suffices to prove there exists some $ \bm v_h\in {\mathcal S}_{r-1,r}^{+,1}(\mathcal T_h)$ such that $\bm w_h=\curl \bm v_h$. In light of the definition of ${\mathcal S}_{r-1}^{+,2}(\mathcal T_h)$, we have
	\[\bm w_h=\bm w_0+\curl\bm v_1\text{ with }\bm w_0\in {\mathcal S}_{r-1}^2(\mathcal T_h)\text{ and }\bm v_1\in \mathcal V^{r-2}(\mathcal T_h).\]
Then $\div\bm w_h=0$ leads to $\div\bm w_0=0.$ By Lemma \ref{exactness}, there exists $\bm v_0\in \mathcal S_{r-1,r}^1(\mathcal T_h)$ such that $\bm w_0=\curl \bm v_0.$
	Define $\bm v_h:=\bm v_0+\bm v_1$, then $\bm w_h=\curl\bm v_h.$
	Since $\bm w_h\in {\mathcal S}_{r-1}^{+,2}(\mathcal T_h)$, we have $\langle[\![\bm w_h\times \bm n_f]\!]_f,\bm q\rangle_f =0$, and hence 
	 $\langle[\![\curl\bm v_h\times \bm n_f]\!]_f,\bm q\rangle_f=0$, for all $\bm q\in [P_{r-3}(f)]^2$ and $f\in\mathcal{F}_h$. Thus we obtain $\bm v_h\in {\mathcal S}_{r-1,r}^{+,1}(\mathcal T_h)$ and consequently the exactness at $\mathcal S_{r-1}^{+,2}(\mathcal T_h)$. To prove the exactness at ${\mathcal S}_{r-1,r}^{+,1}(\mathcal T_h)$, we need to show $\operatorname{ker}(\operatorname{curl})\cap \mathcal S_{r-1,r}^{+,1}(\mathcal T_h)=\operatorname{grad}\mathcal S_{r}^0(\mathcal T_h)$. It is easy to confirm $\operatorname{grad}\mathcal S_{r}^0(\mathcal T_h)\subseteq\operatorname{ker}(\operatorname{curl})\cap \mathcal S_{r-1,r}^{+,1}(\mathcal T_h)$. To show $\operatorname{grad}\mathcal S_{r}^0(\mathcal T_h)\subseteq \operatorname{ker}(\operatorname{curl})\cap \mathcal S_{r-1,r}^{+,1}(\mathcal T_h)$, we count dimensions:
  \begin{align*}
  \operatorname{dim}\operatorname{ker}(\operatorname{curl})\cap \mathcal S_{r-1,r}^{+,1}(\mathcal T_h)& = \operatorname{dim} \mathcal S_{r-1,r}^{+,1}(\mathcal T_h)-\operatorname{dim}\operatorname{curl}\mathcal S_{r-1,r}^{+,1}(\mathcal T_h)\\
  &=\operatorname{dim} \mathcal S_{r-1,r}^{+,1}(\mathcal T_h)-\operatorname{dim}\mathcal S_{r-1}^{+,2}(\mathcal T_h) + \operatorname{dim}\mathcal S_{r-2}^3(\mathcal T_h)\\
  & =\operatorname{dim} \mathcal S_{r-1,r}^{1}(\mathcal T_h)-\operatorname{dim}\mathcal S_{r-1}^{2}(\mathcal T_h) + \operatorname{dim}\mathcal S_{r-2}^3(\mathcal T_h)\\
  &=\operatorname{dim} \operatorname{grad}\mathcal S_{r}^0(\mathcal T_h).
  \end{align*}
  Lemma \ref{dim:bubble:V} and Lemma \ref{exactness} account for the last two equalities, respectively. 
  \end{proof}
We define finite element spaces with vanishing traces on $\partial \Omega$.
\begin{align*}
\mathring{\mathcal S}_{r}^{0}(\mathcal T_h)&=\{ w\in  H_0^1(\Omega):\, w|_{K}\in \mathcal S_{r}^{0}(K)\ \text{for all}~K\in\mathcal{T}_{h}\},\\
\mathring{\mathcal S}_{r-1,r}^{+,1}(\mathcal T_h)&=\{\bm{w}\in  H_{0}(\curl;\Omega):\, \bm{w}|_{K}\in \mathcal S_{r-1,r}^{+,1}(K)\ \text{for all}~K\in\mathcal{T}_{h},  \\
&\langle [\![\curl\bm{w}\times \bm{n}_f]\!]_f, \bm{q}\rangle_{f}=0\ \text{for all}~\bm{q}\in [P_{r-2}(f)]^{2}\text{ and } f\in\mathcal{F}_{h}\},\\
\mathring{\mathcal S}_{r-1}^{+,2}(\mathcal T_h)&=\{ \bm{v}\in  H_{0}(\operatorname{div};\Omega):\, \bm{v}|_{K}\in \mathcal S_{r-1}^{+,2}(K)\ \text{for all}~K\in\mathcal{T}_{h},  \\
&\langle [\![\bm{v}\times \bm{n}_f]\!]_f, \bm{q}\rangle_{f}=0\ \text{for all}~\bm{q}\in [P_{r-2}(f)]^{2}\text{ and } f\in\mathcal{F}_{h}\},\\
\mathring{\mathcal S}_{r-2}^{3}(\mathcal T_h)&=\{ w\in  L_0^2(\Omega):\, w|_{K}\in \mathcal S_{r-2}^{3}(K)\ \text{for all}~K\in\mathcal{T}_{h}\},
\end{align*}

\begin{theorem}\label{exactness2}
    For $r\geq 2$, the following sequence
\begin{equation}
\begin{tikzcd}
 0\arrow{r}{\subset}& \mathring{\mathcal S}_r^0(\mathcal T_h) \arrow{r}{\grad} & \mathring{\mathcal S}_{r-1,r}^{+,1}(\mathcal T_h)\arrow{r}{\curl_h} &  \mathring{\mathcal S}_{r-1}^{+,2}(\mathcal T_h) \arrow{r}{\div_h} & \mathring{\mathcal S}_{r-2}^3(\mathcal T_h) \arrow{r}{} & 0
 \end{tikzcd}
\end{equation}
is an exact complex when $\Omega$ is contractible. 
\end{theorem}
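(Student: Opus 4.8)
The plan is to repeat the proof of Theorem~\ref{exactness1} almost verbatim, replacing each space by its subspace of functions with vanishing boundary trace and keeping careful track of the homogeneous boundary conditions. The only new ingredient needed is the homogeneous analogue of Lemma~\ref{exactness}, namely the exactness of the conforming complex formed by $\mathring{\mathcal S}_r^0(\mathcal T_h)$, the tangential-trace-zero subspace $\mathring{\mathcal S}_{r-1,r}^1(\mathcal T_h)=\mathcal S_{r-1,r}^1(\mathcal T_h)\cap H_0(\curl;\Omega)$, the normal-trace-zero subspace $\mathring{\mathcal S}_{r-1}^2(\mathcal T_h)=\mathcal S_{r-1}^2(\mathcal T_h)\cap H_0(\div;\Omega)$, and $\mathring{\mathcal S}_{r-2}^3(\mathcal T_h)$, with differentials $\grad,\curl,\div$. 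This complex is proved exact exactly as Lemma~\ref{exactness}: exactness at the $H_0(\curl)$-term is the standard argument for $1$-forms with vanishing tangential trace on a contractible domain; surjectivity onto $\mathring{\mathcal S}_{r-2}^3(\mathcal T_h)\subset L_0^2(\Omega)$ follows because every $q_h$ there equals $\div\bm w$ for some $\bm w\in[H_0^1(\Omega)]^3$, combined with the commuting relation \eqref{commute:a2}; and exactness at the $H_0(\div)$-term is a dimension count in which the numbers $\mathcal V_0,\mathcal E_0,\mathcal F_0$ of interior vertices, edges and faces appear and the relation $\mathcal V_0-\mathcal E_0+\mathcal F_0-\mathcal K+1=0$ is used, valid because $\partial\Omega$ is homeomorphic to a sphere so that $\mathcal V_b-\mathcal E_b+\mathcal F_b=2$.

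Granting this, I would check exactness of the stated complex at each of its four terms. Exactness at $\mathring{\mathcal S}_r^0(\mathcal T_h)$ is immediate since $\ker(\grad)$ consists of constants and $\mathbb R\cap H_0^1(\Omega)=\{0\}$. For surjectivity of $\div_h$ onto $\mathring{\mathcal S}_{r-2}^3(\mathcal T_h)$, given $q_h$ there I pick $\bm w\in[H_0^1(\Omega)]^3$ with $\div\bm w=q_h$; since $\bm w$ vanishes on $\partial\Omega$, the DOFs (5) and (6) of $\bm\Pi_h^2\bm w$ on boundary faces vanish, hence $\bm\Pi_h^2\bm w\in\mathring{\mathcal S}_{r-1}^{+,2}(\mathcal T_h)$, and by Lemma~\ref{commu-23} one has $\div_h\bm\Pi_h^2\bm w=\bm\pi_h^3\div\bm w=q_h$.

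For exactness at $\mathring{\mathcal S}_{r-1}^{+,2}(\mathcal T_h)$, let $\bm w_h$ in that space satisfy $\div_h\bm w_h=0$. Applying the local splitting of Lemma~\ref{direct:sum:div} on each element, I write $\bm w_h=\bm w_0+\curl\bm v_1$ with $\bm w_0\in\mathcal S_{r-1}^2(\mathcal T_h)$ and $\bm v_1\in\mathcal V^{r-2}(\mathcal T_h)\subset[H_0^1(\Omega)]^3$; then $\div\bm w_0=0$, and since $\bm w_h\in H_0(\div;\Omega)$ while $\curl\bm v_1$ has vanishing normal trace on every face by Lemma~\ref{curlbubble-property}, also $\bm w_0\in\mathring{\mathcal S}_{r-1}^2(\mathcal T_h)$. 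The homogeneous conforming complex then yields $\bm v_0\in\mathring{\mathcal S}_{r-1,r}^1(\mathcal T_h)$ with $\curl\bm v_0=\bm w_0$, and $\bm v_h:=\bm v_0+\bm v_1$ satisfies $\bm v_h\in H_0(\curl;\Omega)$, $\bm v_h|_K\in\mathcal S_{r-1,r}^{+,1}(K)$, $\curl_h\bm v_h=\bm w_h$, and $[\![\curl\bm v_h\times\bm n_f]\!]_f=[\![\bm w_h\times\bm n_f]\!]_f$ on every $f\in\mathcal F_h$, so the curl-jump condition for $\bm v_h$ is inherited from that for $\bm w_h$; hence $\bm v_h\in\mathring{\mathcal S}_{r-1,r}^{+,1}(\mathcal T_h)$. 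Finally, exactness at $\mathring{\mathcal S}_{r-1,r}^{+,1}(\mathcal T_h)$ is a dimension count as in Theorem~\ref{exactness1}: the previous step gives $\curl_h\mathring{\mathcal S}_{r-1,r}^{+,1}(\mathcal T_h)=\ker(\div_h)\cap\mathring{\mathcal S}_{r-1}^{+,2}(\mathcal T_h)$, so using the surjectivity of $\div_h$, the identity $\dim\mathring{\mathcal S}_{r-1,r}^{+,1}(\mathcal T_h)-\dim\mathring{\mathcal S}_{r-1}^{+,2}(\mathcal T_h)=\dim\mathring{\mathcal S}_{r-1,r}^1(\mathcal T_h)-\dim\mathring{\mathcal S}_{r-1}^2(\mathcal T_h)$ (the two bubble enrichments have equal dimension, by Lemma~\ref{dim:bubble:V}, and add equally many boundary DOFs), and the homogeneous conforming exactness, I obtain $\dim\big(\ker(\curl_h)\cap\mathring{\mathcal S}_{r-1,r}^{+,1}(\mathcal T_h)\big)=\dim\grad\mathring{\mathcal S}_r^0(\mathcal T_h)$; since $\grad\mathring{\mathcal S}_r^0(\mathcal T_h)$ is trivially contained in this kernel, the two coincide.

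I expect the main obstacle to be bookkeeping rather than conceptual: one must set up the homogeneous conforming complex and redo the dimension count of Lemma~\ref{exactness} with interior entities and the shifted Euler relation, and—more delicately—verify at each step that $\bm\Pi_h^2$ and $\bm\Pi_h^{\mathcal V}$ (hence $\bm\Pi_h^1$) preserve the homogeneous boundary conditions and that the curl-jump conditions, imposed over all of $\mathcal F_h$ including boundary faces, are compatible with the purely interior support of the bubble spaces $\mathcal V^{r-2}(K)$ and $\mathcal U^{r-2}(K)$.
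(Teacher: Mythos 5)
Your proposal is correct and follows exactly the route the paper intends: the paper omits this proof with the remark that it is ``quite similar to that of Theorem~\ref{exactness1}'', and your argument is precisely that adaptation, supplying the details the paper leaves out (the homogeneous conforming complex with the interior-entity Euler relation, the boundary-trace preservation of $\bm\Pi_h^2$, and the unchanged dimension gap between the two bubble-enriched spaces). No gaps.
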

\begin{proof}
    The proof of this result is quite similar to that of Theorem \ref{exactness1} and so is omitted.
\end{proof}

\section{Applications to $-\curl\Delta\curl$ Problem}
In this section, we apply $\mathring{\mathcal S}_{r}^{0}(\mathcal T_h)$ and $\mathring{\mathcal S}_{r-1,r}^{+,1}(\mathcal T_h)$ to the quad-curl problem \eqref{prob1}. To simplify notation, we use $\mathring{\mathcal S}_{r}^{0}$ and $\mathring{\mathcal S}_{r-1,r}^{+,1}$ in the following. 
We define $H_0(\grad\curl;\Omega)$ as the space of functions in $H(\grad\curl;\Omega)$ with vanishing boundary conditions:
\begin{align*}
H_0(\grad\curl;\Omega):=&\left\{\bm u \in H(\grad\curl;\Omega): {\bm n}\times\bm u=0\; \text{and}\ \curl\bm u=0\; \text{on}\ \partial \Omega\right\}.
\end{align*}
%The space of $\bm L^2(D)$ functions with a divergence zero is denoted by $H(\div^0;D)$ and defined by
%\[H(\text{div}^0;D) :=\{\bm u\in {\bm L}^2(D):\; \nabla\cdot \bm u=0\}.\]
%For the sake of satisfying divergence-free condition, we adopt mixed methods where the constraint $\div\bm u=0$ in \eqref{prob1} is satisfied in a weak sense by introducing an auxiliary unknown $p$ and employing a mixed
We use a Lagrange multiplier $p$ to deal with the divergence-free condition. 
The mixed variational formulation of \eqref{prob1} is to find $(\bm u;p)\in H_0(\grad\curl;\Omega)\times H^1_0(\Omega)$  such that
\begin{equation}\label{prob2}
\begin{aligned}
a_1(\bm u,\bm v)+b_1(\bm v,p)&=(\bm f, \bm v)~~&&\forall \bm v\in H_0(\grad\curl;\Omega),\\
b_1(\bm u,q)&=0~~&&\forall q\in H_0^1(\Omega),
\end{aligned}
\end{equation}
with $a_1(\bm u,\bm v):=\mu(\grad\curl\bm u, \grad\curl\bm v) +(\curl\bm u,\curl\bm v)+ \gamma(\bm u,\bm v)$ and $b_1(\bm v,p) = (\bm v,\grad p).$

%The weak form \eqref{prob22} can be regarded as a model problem for the high order problems in MHD, e.g., \cite[(1)]{chacon2007steady} and continuum mechanics with size effects, e.g., \cite[(3.27)]{mindlin1962effects}, \cite[(35)]{park2008variational}. 

Taking $\bm v=\grad p$ in \eqref{prob2} and using the vanishing boundary condition of $p$, we can obtain that $p\equiv 0$.

% We take 
% \begin{equation*}
% \begin{split}
% \bm{W}_{h}=\mathring{\mathcal S}_{r-1,r}^{+,1}(\mathcal T_h)&=\{\bm{w}\in  H_{0}(\curl;\Omega):\, \bm{w}|_{K}\in \mathcal S_{r-1,r}^{+,1}(K)\ \text{for all}~K\in\mathcal{T}_{h},  \\
% &\langle [\![\curl\bm{w}\times \bm{n}_f]\!]_f, \bm{q}\rangle_{f}=0\ \text{for all}~\bm{q}\in [P_{r-2}(f)]^{2}\text{ and } f\in\mathcal{F}_{h}\},\\
% \mathring{\mathcal S}_{r}^0=\mathring{\mathcal S}_{r}^{0}(\mathcal T_h)&=\{ w\in  H_0^1(\Omega):\, w|_{K}\in \mathcal S_{r}^{0}(K)\ \text{for all}~K\in\mathcal{T}_{h}\},
% \end{split}
% \end{equation*}
% for $r\geq 2$.

The nonconforming finite element method for \eqref{prob2} seeks $(\bm u_h;p_h)\in \mathring{\mathcal S}_{r-1,r}^{+,1} \times \mathring{\mathcal S}_{r}^0$  such that
\begin{equation}\label{prob22}
\begin{aligned}
a_{1,h}(\bm u_h,\bm v_h)+b_{1}(\bm v_h,p_h)&=(\bm f, \bm v_h)~~&&\forall \bm v_h\in \mathring{\mathcal S}_{r-1,r}^{+,1},\\
b_{1}(\bm u_h,q_h)&=0~~&&\forall q_h\in \mathring{\mathcal S}_{r}^0,
\end{aligned}
\end{equation}
where $a_{1,h}(\bm u_h,\bm v_h):=\mu\sum\limits_{K\in\mathcal T_h}(\grad\curl\bm u_h, \grad\curl\bm v_h)_K +(\curl\bm u_h,\curl\bm v_h)+\gamma (\bm u_h,\bm v_h)$.

%The finite element spaces $\mathring{\mathcal S}_{r-1,r}^{+,1}$ and  $\mathring{\mathcal S}_{r}^0$ should satisfy:
%\begin{itemize}
%	\item the inf-sup condition
%\begin{align}\label{infsup2}
%	\sup_{\bm v\in\mathring{\mathcal S}_{r-1,r}^{+,1}}\frac{b_1(\bm v,w)}{\|\bm v\3bar_1}\geq C\|w\|_1, \quad\forall w\in \mathring{\mathcal S}_{r}^0,
%\end{align}
%\item the coercivity condition
%\begin{align}\label{coer}
%\|\bm v_h\|\gc,h_{1,h}}\geq c\|\bm v_h\3bar_1\text{ for }\bm v_h\in N_h,
%\end{align}
%\item discrete divergence-free condition
%\begin{align}\label{div-free3}
%\grad \mathring{\mathcal S}_{r}^0=\ker(\curl)\cap \mathring{\mathcal S}_{r-1,r}^{+,1}.
%\end{align} 
%\end{itemize} 

%By Theorem \ref{exactness1},
%\begin{align}
%\grad \mathring{\mathcal S}_{r}^0=\ker(\curl)\cap \mathring{\mathcal S}_{r-1,r}^{+,1}.
%\end{align}
For $\bm v_h\in \mathring{\mathcal S}_{r-1,r}^{+,1}+H(\grad\curl;\Omega)$, define 
\begin{eqnarray*}
	\3bar\bm v_h\3bar_1^2=\|\bm v_h\|^2+\|\curl\bm v_h\|^2+\mu|\curl\bm v_h|^2_{1,h},
\end{eqnarray*}
where $|\cdot|_{1,h} = \sum\limits_{K\in\mathcal T_h}|\cdot|_{1,K}$.
It is clear that
\begin{align}\label{bounda}
	&a_{1,h}(\bm u_h,\bm v_h)\leq C\3bar\bm v_h\3bar_1\3bar\bm u_h\3bar_1~~&&\text{for }\bm u_h,\bm v_h\in\mathring{\mathcal S}_{r-1,r}^{+,1}+H(\grad\curl;\Omega),\\
	&b_{1}(\bm v_h, q_h)\leq C\3bar\bm v_h\3bar_1\|q_h\|_{1}~~&&\text{for }\bm v_h\in\mathring{\mathcal S}_{r-1,r}^{+,1}+H(\grad\curl;\Omega) \text{ and }q_h\in H^1(\Omega).\label{boundb}
\end{align}
The inf-sup condition holds since for any $q_h\in \mathring{\mathcal S}_{r}^0$, 
\begin{align}\label{infsup2}
	\sup_{\bm v\in\mathring{\mathcal S}_{r-1,r}^{+,1}}\frac{b_{1}(\bm v,q_h)}{\3bar\bm v\3bar_1}\geq \frac{b_{1}(\grad q_h,q_h)}{\3bar\grad q_h\3bar_1}=\|\grad q_h\|\geq C\|q_h\|_1.
\end{align}
Define 
\[N_h=\{\bm v_h\in \mathring{\mathcal S}_{r-1,r}^{+,1}:b_1(\bm v_h,q_h)=0, \ \forall q_h\in \mathring{\mathcal S}_{r}^0\}.\]
%Now we prove the coercivity. 
% For $\bm w_h\in \mathring{\mathcal S}_{r-1,r}^{+,1}$, we have $\curl \bm w_h\in \mathring{\mathcal S}_{r-1}^{+,2}$. 
%By \eqref{poincare}, we have
%\[\|\grad_h\curl \bm w_h\|\geq C \|\curl \bm w_h\|_{1,h}.\]
Following the argument in the Theorem \cite[Theorem 4.7]{hiptmair2002finite}, we can prove the following discrete Poincar\'e inequality for $ \bm w_h\in N_h$,
\[\|\curl \bm w_h\|\geq C\|\bm w
_h\|.\]
Then it holds the following coercivity
\begin{align}\label{coercivity}
	a_{1,h}(\bm w_h,\bm w_h)\geq C\3bar\bm w_h\3bar_{1}^2 \, \, \text{ for $\bm w_h\in N_h$}.
\end{align}
%Here $\|\bm v_h\|_{a_{1,h}}^2=a_1(\bm v_h,\bm v_h)$.

%The following Theorem implies the continuity condition that the spaces $\mathring{\mathcal S}_{r-2}^{3}$ should satisfy except for the above-mentioned three conditions.
\begin{theorem}\label{converg2}	Let $(\bm{u};p)\in H_0(\grad\curl;\Omega)\times H_0^1(\Omega)$ be the solution of the variational problem \eqref{prob2}. Assume that  $\mathring{\mathcal S}_{r-1,r}^{+,1}\subset H_0(\curl;\Omega)$ and $\mathring{\mathcal S}_{r}^0\subset H_0^1(\Omega)$ satisfy \eqref{bounda} -- \eqref{coercivity}. Then the problem \eqref{prob22} has a unique solution $(\bm u_h;p_h)$  such that $p_h=0$ and
	\begin{align}\label{estimateu}
		\3bar\bm u-\bm u_h\3bar_1&\leq C\Big(\inf_{\bm w_h\in \mathring{\mathcal S}_{r-1,r}^{+,1}}\3bar\bm u-\bm w_h\3bar_1+\sup_{\bm z_h\in N_h\backslash\{0\}}\frac{E_{1,h}(\bm u,\bm z_h)}{\3bar\bm z_h\3bar_1}\Big),
	\end{align}
	where the consistency error 
	\begin{align}\label{quadcurl:consistant}
		E_{1,h}(\bm u,\bm z_h)=-\mu \sum_{f\in \mathcal F_h}\langle \partial_{\bm n_f}(\curl\bm u),[\![\curl\bm z_h]\!]_f\rangle_f.
	\end{align}
	\end{theorem}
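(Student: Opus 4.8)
The statement is a standard Strang-type error estimate for a mixed nonconforming method that satisfies a Babuška–Brezzi-style stability setup, so the plan is to follow the usual existence/uniqueness-then-consistency route. First I would settle well-posedness of the discrete problem \eqref{prob22}: the bilinear form $a_{1,h}$ is bounded on $\mathring{\mathcal S}_{r-1,r}^{+,1}$ by \eqref{bounda}, coercive on the discrete kernel $N_h$ by \eqref{coercivity}, the form $b_1$ is bounded by \eqref{boundb}, and the inf--sup condition \eqref{infsup2} holds; hence the classical theory of mixed methods (e.g.\ Brezzi's theorem) gives a unique solution $(\bm u_h;p_h)$. To see $p_h=0$, test the first equation of \eqref{prob22} with $\bm v_h=\grad p_h\in \mathring{\mathcal S}_{r-1,r}^{+,1}$ (this uses $\grad\mathring{\mathcal S}_r^0\subset\mathring{\mathcal S}_{r-1,r}^{+,1}$, which is part of the exact complex of Theorem \ref{exactness2}); since $\bm f\in H(\div^0;\Omega)$ and $b_1(\grad p_h,q_h)=(\grad p_h,\grad q_h)$, one gets $a_{1,h}(\bm u_h,\grad p_h)+\|\grad p_h\|^2=(\bm f,\grad p_h)=0$, and because $\curl\grad p_h=0$ the term $a_{1,h}(\bm u_h,\grad p_h)=\gamma(\bm u_h,\grad p_h)$; combined with $b_1(\bm u_h,q_h)=0$ for all $q_h$, i.e.\ $(\bm u_h,\grad q_h)=0$, this forces $\|\grad p_h\|^2=0$, so $p_h=0$ by the boundary condition. (The same computation at the continuous level gives $p=0$, as already noted before the statement.)

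Next I would derive the consistency identity. For the exact solution $\bm u\in H_0(\grad\curl;\Omega)$ of \eqref{prob1}, multiply $-\mu\curl\Delta\curl\bm u+\curl\curl\bm u+\gamma\bm u=\bm f$ by $\bm z_h\in N_h$ and integrate by parts elementwise. The $\gamma$- and $\curl\curl$-terms integrate by parts cleanly because $\bm z_h\in H(\curl;\Omega)$ is tangentially continuous and has vanishing tangential trace on $\partial\Omega$, reproducing $(\curl\bm u,\curl\bm z_h)+\gamma(\bm u,\bm z_h)$ with no jump contributions. For the $\mu\curl\Delta\curl$ term, integrating by parts twice produces the bulk term $\mu\sum_K(\grad\curl\bm u,\grad\curl\bm z_h)_K$ together with face terms $\mu\sum_f\langle\partial_{\bm n_f}(\curl\bm u),[\![\curl\bm z_h]\!]_f\rangle_f$ (the $\curl\curl\bm u$-against-$[\![\bm z_h\times\bm n]\!]$ face terms vanish since $\bm z_h$ is $H(\curl)$-conforming). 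Hence
\[
a_{1,h}(\bm u,\bm z_h)+b_1(\bm z_h,p)-(\bm f,\bm z_h)=-\mu\sum_{f\in\mathcal F_h}\langle\partial_{\bm n_f}(\curl\bm u),[\![\curl\bm z_h]\!]_f\rangle_f=E_{1,h}(\bm u,\bm z_h),
\]
and since $p=0$ this reads $a_{1,h}(\bm u,\bm z_h)-(\bm f,\bm z_h)=E_{1,h}(\bm u,\bm z_h)$ for all $\bm z_h\in N_h$.

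Finally I would run the Strang argument. Pick any $\bm w_h\in N_h$ (note $N_h\subset\mathring{\mathcal S}_{r-1,r}^{+,1}$; the infimum over $\mathring{\mathcal S}_{r-1,r}^{+,1}$ in \eqref{estimateu} can in fact be reduced to $N_h$ using the inf--sup condition to correct the non-divergence-free part, at the cost of a constant, which is the standard trick). Then $\bm w_h-\bm u_h\in N_h$, so by coercivity \eqref{coercivity},
\[
C\3bar\bm w_h-\bm u_h\3bar_1^2\le a_{1,h}(\bm w_h-\bm u_h,\bm w_h-\bm u_h)=a_{1,h}(\bm w_h-\bm u,\bm w_h-\bm u_h)+\big(a_{1,h}(\bm u,\bm w_h-\bm u_h)-a_{1,h}(\bm u_h,\bm w_h-\bm u_h)\big).
\]
Using \eqref{prob22} with test function $\bm w_h-\bm u_h\in N_h$ (so the $b_1$ terms drop, as $p_h=0$ and $b_1(\bm w_h-\bm u_h,q_h)=0$) together with the consistency identity (and $p=0$, so $b_1(\bm w_h-\bm u_h,p)=0$), the bracketed term equals $E_{1,h}(\bm u,\bm w_h-\bm u_h)$. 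Bounding the first term by \eqref{bounda} as $C\3bar\bm w_h-\bm u\3bar_1\3bar\bm w_h-\bm u_h\3bar_1$ and dividing through, then applying the triangle inequality $\3bar\bm u-\bm u_h\3bar_1\le\3bar\bm u-\bm w_h\3bar_1+\3bar\bm w_h-\bm u_h\3bar_1$ and taking the infimum over $\bm w_h\in N_h$, yields \eqref{estimateu}. The main obstacle is the elementwise integration by parts for the $\curl\Delta\curl$ term: one must track carefully which face jump terms survive given that $\bm z_h\in H(\curl;\Omega)$ (so $[\![\bm z_h\times\bm n_f]\!]_f=0$ but $[\![\curl\bm z_h]\!]_f$ need not vanish, and only its tangential part is controlled by the defining constraint of $\mathring{\mathcal S}_{r-1,r}^{+,1}$) and that $\bm u$ is smooth enough for $\partial_{\bm n_f}(\curl\bm u)$ to make sense on faces — this is where the regularity assumption $\bm u\in H^s(\curl;\Omega)$, $s\ge 2$, enters when one subsequently estimates $E_{1,h}$, though the identity \eqref{quadcurl:consistant} itself only needs $\curl\bm u\in H^2$ piecewise.
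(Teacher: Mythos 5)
Your proposal is correct and follows essentially the same route as the paper: testing with $\grad p_h$ to get $p_h=0$, elementwise integration by parts to obtain the consistency identity \eqref{quadcurl:consistant} (with the $\curl\curl$ and tangential face terms killed by $H(\curl)$-conformity), the coercivity-on-$N_h$ Strang argument, and finally the gradient-correction trick to pass from the infimum over $N_h$ to the infimum over all of $\mathring{\mathcal S}_{r-1,r}^{+,1}$. The only difference is one of detail: you give the $p_h=0$ computation more fully than the paper does, while the last step (constructing $\bm w_h\in N_h$ from an arbitrary $\bm v_h$ via a discrete Poisson problem, with $\|\grad q_h\|\le\|\bm u-\bm v_h\|$) is spelled out in the paper but only sketched parenthetically in your write-up.
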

\begin{proof}
Taking $\bm v_h=\grad p_h$ in \eqref{prob22}, then 
\[(\grad p_h,\grad p_h)=0,\]
which together with the vanishing boundary condition of $p_h$ leads to $p_h=0$.

For any $\bm v_h\in \mathring{\mathcal S}_{r-1,r}^{+,1}$, by using integration by parts, we can show that 
\begin{align}\label{E2h}
 	a_{1,h}(\bm u,\bm v_h)+b_1(\bm v_h,p)-(\bm f,\bm v_h)=E_{1,h}(\bm u,\bm v_h).
 \end{align}
For any $\bm w_h\in N_h$, applying equations \eqref{prob22}, \eqref{bounda}, \eqref{coercivity}, and \eqref{E2h} yields
\begin{align*}
		&c\3bar\bm w_h-\bm u_h\3bar_1^2\leq a_{1,h}(\bm w_h-\bm u_h,\bm w_h-\bm u_h)\\
		=&a_{1,h}(\bm w_h-\bm u,\bm w_h-\bm u_h)+a_{1,h}(\bm u,\bm w_h-\bm u_h)-a_{1,h}(\bm u_h,\bm w_h-\bm u_h)\\
		=&a_{1,h}(\bm w_h-\bm u,\bm w_h-\bm u_h)+a_{1,h}(\bm u,\bm w_h-\bm u_h)+b_1(\bm w_h-\bm u_h,p_h)-(\bm f,\bm w_h-\bm u_h)\\
		=&a_{1,h}(\bm w_h-\bm u,\bm w_h-\bm u_h)+E_{1,h}(\bm u,\bm w_h-\bm u_h)+b_1(\bm w_h-\bm u_h,p_h-p)\\
		\leq &\3bar\bm w_h-\bm u\3bar_1\3bar\bm w_h-\bm u_h\3bar_1+E_{1,h}(\bm u,\bm w_h-\bm u_h)+b_1(\bm w_h-\bm u_h,p_h-p).
\end{align*}
The facts $p=0$ and $p_h=0$ lead to $b_1(\bm w_h-\bm u_h, p_h-p)=0$, and hence we have 
	\[\3bar\bm w_h-\bm u_h\3bar_1\leq \3bar\bm w_h-\bm u\3bar_1+\frac{E_{1,h}(\bm u,\bm w_h-\bm u_h)}{\3bar\bm w_h-\bm u_h\3bar_1}\leq \3bar\bm w_h-\bm u\3bar_1+\sup_{\bm z_h\in  N_h\backslash\{0\}}\frac{E_{1,h}(\bm u,\bm z_h)}{\3bar\bm z_h\3bar_1}.\]
Then the triangle inequality and \eqref{bounda} leads to
\begin{align}\label{Nh-estimate}
\3bar\bm u-\bm u_h\3bar_1\leq C \3bar\bm u-\bm w_h\3bar_1+C\sup_{\bm z_h\in N_h\backslash\{0\}}\frac{E_{1,h}(\bm u,\bm z_h)}{\3bar\bm z_h\3bar_1}.
\end{align}
Furthermore, for any $\bm v_h\in \mathring{\mathcal S}_{r-1,r}^{+,1}$, we decompose $\bm v_h$ as 
		\[\bm v_h=\grad q_h+\bm w_h\]
		with $\bm w_h\in N_h$ and $\grad q_h\in \grad \mathring{\mathcal S}_{r}^0$ satisfying 
		\[(\grad q_h,\grad \theta_h)=(\bm u-\bm v_h,\grad\theta_h) \text{ for any }\theta_h\in \mathring{\mathcal S}_{r}^0.\]
		 Taking $\theta_h=q_h$ yields $\|\grad q_h\|\leq \|\bm u-\bm v_h\|.$ Then
		\[\3bar\bm u-\bm w_h\3bar_1\leq\3bar\bm u-\bm v_h\3bar_1+\|\grad q_h\|\leq 2\3bar\bm u-\bm v_h\3bar_1,\]
		which, combined with \eqref{Nh-estimate}, leads to
		\[\3bar\bm u-\bm u_h\3bar_1\leq C \3bar\bm u-\bm v_h\3bar_1+C\sup_{\bm z_h\in N_h\backslash\{0\}}\frac{E_{1,h}(\bm u,\bm z_h)}{\3bar\bm z_h\3bar_1}.\]
			Taking infimum over $\bm v_h\in \mathring{\mathcal S}_{r-1,r}^{+,1}$ on both sides leads to \eqref{estimateu}.
%		\begin{align*}
%		\|\bm u-\bm u_h\3bar_1\leq C\inf_{\bm v_h\in \mathring{\mathcal S}_{r-1,r}^{+,1}}\|\bm u-\bm v_h\3bar_1+C\sup_{\bm z_h\in N_h\backslash\{0\}}\frac{E_{1,h}(\bm u,\bm z_h)}{\|\bm z_h\|_{a_{1,h}}}.
%		\end{align*}

\end{proof}
\begin{theorem}\label{Thm:approx-quadcurl}
Let $(\bm{u};p)\in H_0(\grad\curl;\Omega)\times H_0^1(\Omega)$ be the solution of the variational problem \eqref{prob2} and $(\bm{u}_{h};p_{h})\in  \mathring{\mathcal S}_{r-1,r}^{+,1}\times \mathring{\mathcal S}_{r}^0$ be the solution of the discrete problem \eqref{prob22}. Suppose that $\bm{u} \in H^{s}(\curl;\Omega)$ with $2\leq s\leq r$, then there holds  $p=p_h=0$ and 
\begin{equation}\label{err:u:anorm22}
\3bar\bm u-\bm u_h\3bar_1\leq Ch^{s-1}\big(h|\bm{u}|_{s}+h|\curl\bm{u}|_{s}+\mu^{1/2}|\curl\bm u|_{s}\big).
\end{equation}
\end{theorem}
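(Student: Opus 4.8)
The plan is to invoke the abstract quasi-optimal bound \eqref{estimateu} of Theorem~\ref{converg2} (whose hypotheses \eqref{bounda}--\eqref{coercivity} have already been verified for the pair $\mathring{\mathcal S}_{r-1,r}^{+,1}\times\mathring{\mathcal S}_r^0$) and then to estimate separately the best-approximation term and the consistency error \eqref{quadcurl:consistant}. That $p=0$ and $p_h=0$ is immediate from Theorem~\ref{converg2} and the remark following \eqref{prob2}. For the approximation term I take $\bm w_h=\bm\Pi_h^1\bm u$ in the infimum of \eqref{estimateu}. Since $\bm u\in H_0(\grad\curl;\Omega)$, the interpolant lies in $\mathring{\mathcal S}_{r-1,r}^{+,1}$: the boundary contributions of the DOFs (1)--(2) vanish because $\bm u\times\bm n_f=0$ on $\partial\Omega$, and that of DOF (3) vanishes because $\curl\bm u=0$ on $\partial\Omega$. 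Expanding $\3bar\cdot\3bar_1$ and invoking Theorem~\ref{est-pi1} (with $m=0$ for $\|\bm u-\bm\Pi_h^1\bm u\|$ and $\|\curl(\bm u-\bm\Pi_h^1\bm u)\|$, and with $m=1$ for $|\curl(\bm u-\bm\Pi_h^1\bm u)|_{1,h}$) yields
\[\3bar\bm u-\bm\Pi_h^1\bm u\3bar_1^2\le C\big(h^{2s}|\bm u|_s^2+h^{2s}|\curl\bm u|_s^2+\mu\,h^{2s-2}|\curl\bm u|_s^2\big),\]
so the approximation term already matches the right-hand side of \eqref{err:u:anorm22}.

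It then remains to bound $E_{1,h}(\bm u,\bm z_h)$ for $\bm z_h\in N_h$. The crucial input is the weak tangential continuity built into $\mathring{\mathcal S}_{r-1,r}^{+,1}$: for every face $f$ (interior faces by \eqref{regularity:curl}; boundary faces because the boundary part of DOF~(3) vanishes) and every $\bm q\in[P_{r-2}(f)]^2$ one has $\langle[\![\curl\bm z_h\times\bm n_f]\!]_f,\bm q\rangle_f=0$. Using $[\![\curl\bm z_h]\!]_f=-[\![\curl\bm z_h\times\bm n_f]\!]_f\times\bm n_f$ (valid since $\curl\bm z_h$ has continuous normal trace across $f$) together with this orthogonality, I may subtract from $\partial_{\bm n_f}(\curl\bm u)\times\bm n_f$ its $L^2(f)$-projection $\bm P_f$ onto $[P_{r-2}(f)]^2$ at no cost, so that
\[E_{1,h}(\bm u,\bm z_h)=-\mu\sum_{f\in\mathcal F_h}\big\langle(\bm I-\bm P_f)\big(\partial_{\bm n_f}(\curl\bm u)\times\bm n_f\big),[\![\curl\bm z_h\times\bm n_f]\!]_f\big\rangle_f.\]
A face-wise Cauchy--Schwarz inequality, a Bramble--Hilbert/trace estimate for the data factor ($\le Ch_K^{s-3/2}|\curl\bm u|_{s,K}$, using $\curl\bm u\in H^s(K)$ with $s\le r$), the scaled trace inequality for the jump factor summed over the two elements meeting $f$, and a suitable distribution of the coefficient $\mu$ between the two factors — together with $\|\curl\bm z_h\|^2+\mu|\curl\bm z_h|_{1,h}^2\le\3bar\bm z_h\3bar_1^2$ — then give $|E_{1,h}(\bm u,\bm z_h)|\le C\mu^{1/2}h^{s-1}|\curl\bm u|_s\,\3bar\bm z_h\3bar_1$. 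Substituting the two bounds into \eqref{estimateu} completes the proof.

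The main obstacle is precisely this consistency-error step: one must extract the exact power $h^{s-1}$ and the weighting $\mu^{1/2}$ from the face integrals, which forces essential use of the extra regularity \eqref{regularity:curl} — the very property the bubble enrichment $\mathcal V^{r-2}(K)$ was designed to provide — and a careful bookkeeping of $\mu$ alongside the trace constants; by contrast, the membership $\bm\Pi_h^1\bm u\in\mathring{\mathcal S}_{r-1,r}^{+,1}$ and the interpolation estimate are routine consequences of the DOF definitions and Theorem~\ref{est-pi1}.
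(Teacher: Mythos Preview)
Your overall strategy coincides with the paper's: invoke Theorem~\ref{converg2}, bound the approximation term by taking $\bm w_h=\bm\Pi_h^1\bm u$ and applying Theorem~\ref{est-pi1}, and then estimate the consistency error \eqref{quadcurl:consistant} using the weak continuity \eqref{regularity:curl}. The approximation step and the verification $\bm\Pi_h^1\bm u\in\mathring{\mathcal S}_{r-1,r}^{+,1}$ are handled correctly.

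There is, however, a genuine gap in your consistency-error bound. After subtracting $\bm P_f$ from the data factor you apply the plain scaled trace inequality to the jump $[\![\curl\bm z_h\times\bm n_f]\!]_f$, which produces two contributions:
\[
\mu\,h^{s-3/2}|\curl\bm u|_{s,K}\cdot\big(h^{-1/2}\|\curl\bm z_h\|_K+h^{1/2}|\curl\bm z_h|_{1,K}\big).
\]
The second piece indeed gives $\mu^{1/2}h^{s-1}|\curl\bm u|_s\cdot\mu^{1/2}|\curl\bm z_h|_{1,h}\le \mu^{1/2}h^{s-1}|\curl\bm u|_s\,\3bar\bm z_h\3bar_1$. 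But the first piece yields $\mu h^{s-2}|\curl\bm u|_s\,\|\curl\bm z_h\|$, and no ``suitable distribution of $\mu$'' turns this into $\mu^{1/2}h^{s-1}$: bounding $\|\curl\bm z_h\|\le\3bar\bm z_h\3bar_1$ leaves the coefficient $\mu h^{s-2}$, which for $\mu\sim 1$ and $s=2$ is $O(1)$. So the claimed estimate $|E_{1,h}(\bm u,\bm z_h)|\le C\mu^{1/2}h^{s-1}|\curl\bm u|_s\,\3bar\bm z_h\3bar_1$ does not follow from the ingredients you list.

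The missing step---and exactly what the paper does in \eqref{consistent:err:22}---is to exploit that the data factor $(\bm I-\bm P_f)\big(\partial_{\bm n_f}(\curl\bm u)\times\bm n_f\big)$ is already $L^2(f)$-orthogonal to constants (since $r\ge2$), so one may also subtract the face mean $\mathcal P_f^0(\curl\bm z_h)$ from the jump at no cost. Then the Poincar\'e-type trace estimate gives only the term $Ch^{1/2}|\curl\bm z_h|_{1,K}$, the problematic $h^{-1/2}\|\curl\bm z_h\|_K$ contribution disappears, and the bound $\mu^{1/2}h^{s-1}|\curl\bm u|_s\,\3bar\bm z_h\3bar_1$ follows cleanly.
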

\begin{proof}
	In light of Theorem \ref{est-pi1}, for $\bm{u}\in  H^{s}(\curl;\Omega)$ with $2\leq s\leq r$,  there holds
\begin{equation}\label{interpolation:pi2h:a1h22}
\begin{split}
\3bar\bm{u}-\bm\Pi^1_h\bm{u}\3bar_1&\leq C \left(\mu^{1/2}|\curl(\bm{u}-\bm\Pi^1_h\bm{u})|_{1,h}+\|\curl(\bm{u}-\bm\Pi^1_h\bm{u})\|+\|\bm{u}-\bm\Pi^1_h\bm{u}\|\right)\\
&\leq C \left(\mu^{1/2}h^{s-1}|\curl\bm{u}|_{s}+h^{s}|\curl\bm{u}|_{s}+h^{s}|\bm{u}|_{s}\right). 
\end{split}
\end{equation}
%can be obtained by \cite[Theorem 3.3.3]{Wang-Shi-FEMbook}. 

To estimate the consistency error \eqref{quadcurl:consistant}, define $\mathcal P_f^k:[L^2(f)]^3\rightarrow [P_k(f)]^3$ to be the $L^2$ orthogonal projection and let $\bm{w}_{h}\in  N_h$. 
%Then we have
%\begin{equation}\label{continuitycondition:t}
%\langle[\![\bm{w}_{h}\times \bm{n}]\!]_f, \bm{q}\rangle_{f}=0\quad\text{for all}\, \, \bm{q}\in [P_{r-2}(f)]^{2} \text{ and } f\in\mathcal{F}_{h}.
%\end{equation}
Noting that $\curl\bm{w}_{h}\in \curl \mathring{\mathcal S}_{r-1,r}^{+,1}\subseteq \mathcal S_{r-1}^{+,2}(\mathcal T_h)$ and using the definition of $\mathcal S_{r-1}^{+,2}(\mathcal T_h)$, we have
%\begin{equation}
%\begin{split}
%|E_{1,h}(\bm u,\bm w_{h})|&=\Big|\sum_{f\in \mathcal F_h}\langle \nu\curl\bm u,[\![\bm w_{h}\times \bm n_f]\!]_f\rangle_f\Big|\\
%&=\Big|\sum_{f\in \mathcal F_h}\nu\langle \curl\bm u-\mathcal{P}_{f}^{r-2}(\curl\bm u),[\![\bm w_{h}\times \bm n_f- \mathcal{P}_{f}^{0}(\bm w_{h}\times \bm n_f)]\!]_f\rangle_f\Big|\\
%\end{split}
%\end{equation}
%where $\mathcal{P}_{f}^{r-3}$ and $\mathcal{P}_{f}^{0}$ are the orthogonal projection operators from $[L^{2}(f)]^{3}$ to $[P_{r-3}(f)]^{3}$ and $[P_{0}(f)]^{3}$, respectively. The error estimate for these orthogonal projection operators \cite[Theorem 3.3.5]{Wang-Shi-FEMbook} and Schwarz inequality lead to 
\begin{equation}
\label{consistent:err:22}
\begin{split}
E_{1,h}(\bm u,\bm w_h)&=-\mu\sum_{f\in \mathcal F_h}\langle \partial_{\bm n_f}(\curl\bm u),[\![\curl\bm w_h]\!]_f\rangle_f \\
&=-\mu\sum_{f\in \mathcal F_h}\langle \partial_{\bm n_f}(\curl\bm u)-\mathcal P_{f}^{r-2}\partial_{\bm n_f}(\curl\bm u),[\![\curl\bm w_h]\!]_f\rangle_f\\
&=-\mu\sum_{K\in \mathcal T_h}\sum_{f\in \mathcal F_h(K)}\langle \partial_{\bm n_{f,K}}(\curl\bm u)-\mathcal P_{f}^{r-2}\partial_{\bm n_{f,K}}(\curl\bm u),\curl\bm w_h\rangle_{f}\\
&=-\mu\sum_{K\in \mathcal T_h}\sum_{f\in \mathcal F_h(K)}\langle \partial_{\bm n_{f,K}}(\curl\bm u)-\mathcal P_{f}^{r-2}\partial_{\bm n_{f,K}}(\curl\bm u),(\curl\bm w_h-\mathcal P_{f}^{0}\curl\bm w_h)\rangle_{f}\\
&\leq C\mu^{1/2} h^{s-1}|\curl\bm u|_{s}\3bar\bm{w}_{h}\3bar_1.
%|E_{1,h}(\bm u,\bm w_{h})|
%&\leq C \nu^{1/2} h^{s-1}|\bm u|_{s}\|\bm{w}_{h}\|_{a_{1,h}}.
\end{split}
\end{equation}
According to Theorem \ref{converg2}, we can arrive at \eqref{err:u:anorm22} by combining \eqref{interpolation:pi2h:a1h22} and  \eqref{consistent:err:22}.

\end{proof}
\begin{remark}\label{high-conv-quadcurl}
When $\mu$ is sufficiently small, the first two terms $h|\bm{u}|_{s}+h|\curl\bm{u}|_{s}$ in the right hand side of \eqref{err:u:anorm22} will dominate, and hence $\3bar\bm u-\bm u_h\3bar_1$ will have one-order higher accuracy.
\end{remark}

\section{Applications to Brinkman Problem}
In this section, we apply $\mathring{\mathcal S}_{r-1}^{+,2}(\mathcal T_h)$ and $\mathring{\mathcal S}_{r-2}^{3}(\mathcal T_h)$ to the Brinkman problem \eqref{prob2}. To simplify notation, we drop $\mathcal T_h$ and simply denote $\mathring{\mathcal S}_{r-1}^{+,2}$ and $\mathring{\mathcal S}_{r-2}^{3}$.

The variational formulation is to find $(\bm u;p)\in [ H_0^1(\Omega)]^3\times L_0^2(\Omega)$  such that
\begin{equation}\label{variation-prob-brinkman}
\begin{split}
a_2(\bm u,\bm v)-b_2(\bm v,p)&=(\bm f, \bm v)\quad \forall \bm v\in [H_0^1(\Omega)]^3,\\
b_2(\bm u,w)&=(g,w)\quad \forall w\in L_0^2(\Omega),
\end{split}
\end{equation}
where $a_2(\bm u,\bm v):=\nu(\grad\bm u, \grad\bm v) + \alpha(\bm u,\bm v)$ and $b_2(\bm v,p)=(\div \bm v,p)$.

% We take
% \begin{equation*}
% \begin{split}
% \bm{V}_{h}=\mathring{\mathcal S}_{r-1}^{+,2}(\mathcal T_h)&=\{ \bm{v}\in  H_{0}(\operatorname{div};\Omega):\, \bm{v}|_{K}\in \mathcal S_{r-1}^{+,2}(K)\ \text{for all}~K\in\mathcal{T}_{h},  \\
% &\langle [\![\bm{v}\times \bm{n}_f]\!]_f, \bm{q}\rangle_{f}=0\ \text{for all}~\bm{q}\in [P_{r-2}(f)]^{2}\text{ and } f\in\mathcal{F}_{h}\},\\
% \mathring{\mathcal S}_{r-2}^{3}=\mathring{\mathcal S}_{r-2}^{3}(\mathcal T_h)&=\{ w\in  L_0^2(\Omega):\, w|_{K}\in \mathcal S_{r-2}^{3}(K)\ \text{for all}~K\in\mathcal{T}_{h}\},
% \end{split}
% \end{equation*}
% for $r\geq 2$.

The nonconforming finite element method seeks $(\bm u_h;p_h)\in \mathring{\mathcal S}_{r-1}^{+,2}\times \mathring{\mathcal S}_{r-2}^{3}$ such that 
\begin{equation}\label{FEM-brinkman}
\begin{split}
a_{2,h}(\bm u_h,\bm v_h)-b_{2}(\bm v_h,p_h)&=(\bm f, \bm v_h)\quad \forall \bm v_h\in \mathring{\mathcal S}_{r-1}^{+,2},\\
b_{2}(\bm u_h,w_h)&=(g,w_h)\quad \forall w_h\in \mathring{\mathcal S}_{r-2}^{3},
\end{split}
\end{equation}
where $a_{2,h}(\bm u_h,\bm v_h):=\sum\limits_{K\in\mathcal T_h}\nu(\grad\bm u_h, \grad\bm v_h)_K + \alpha(\bm u_h,\bm v_h)$.

For $\bm v\in \mathring{\mathcal S}_{r-1}^{+,2}+[H_0^1(\Omega)]^3$, we define 
\[\3bar\bm v\3bar_2:=\left(a_{2,h}(\bm v,\bm v)+M\|\div\bm v\|^2\right)^{\frac{1}{2}} \text{ with }M=\max\{\nu,\alpha\}.\]
To show that scheme \eqref{FEM-brinkman} is uniformly stable with respect to $\nu$, we first have
\begin{align*}
&a_{2,h}(\bm u_h,\bm v_h)\leq \3bar\bm u_h\3bar_2\3bar\bm v_h\3bar_2\, &&\text{for }\bm u_h,\bm v_h\in \mathring{\mathcal S}_{r-1}^{+,2}+[H_0^1(\Omega)]^3,\\
&b_{2}(\bm v_h,q_h)\leq C\3bar\bm v_h\3bar_2\|q_h\|\, &&\text{for }\bm u_h\in \mathring{\mathcal S}_{r-1}^{+,2}+[H_0^1(\Omega)]^3\text{ and }q_h\in L_0^2(\Omega).
\end{align*}

We then show that the scheme is inf-sup stable. 
\begin{lemma}[Inf-sup Condition]
For any $w_h\in \mathring{\mathcal S}_{r-2}^{3}$, there exists a constant $c>0$ such that 
\begin{align}\label{infsup}
	\sup_{\bm v_h\in\mathring{\mathcal S}_{r-1}^{+,2}}\frac{b_{2}(\bm v_h,w_h)}{\3bar\bm v_h\3bar_2}\geq c\|w_h\|.
\end{align}
%where $\|\cdot\|_{2,h}^2=\sum_{K\in\mathcal T_h}\|\cdot\|_{2,K}^2.$
\end{lemma}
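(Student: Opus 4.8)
The plan is the standard Fortin-operator argument: given $w_h\in\mathring{\mathcal S}_{r-2}^{3}$ I will exhibit a single test field $\bm v_h\in\mathring{\mathcal S}_{r-1}^{+,2}$ with $b_2(\bm v_h,w_h)=\|w_h\|^2$ and $\3bar\bm v_h\3bar_2\le C M^{1/2}\|w_h\|$, where $M=\max\{\nu,\alpha\}$; the ratio in \eqref{infsup} is then bounded below by $C^{-1}M^{-1/2}\|w_h\|$. Since $\alpha$ is of moderate size (and $M=\alpha$ once $\nu$ is small, while for $\nu$ not small both are moderate), this lower bound does not degenerate as $\nu\to 0$, so $c$ may be taken as $C^{-1}M^{-1/2}$.

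First I would lift $w_h$: because $\mathring{\mathcal S}_{r-2}^{3}\subset L_0^2(\Omega)$, the bounded right inverse of $\div\colon [H_0^1(\Omega)]^3\to L_0^2(\Omega)$ from \cite[Corollary 2.4]{Girault2012Finite} produces $\bm w\in[H_0^1(\Omega)]^3$ with $\div\bm w=w_h$ and $\|\bm w\|_1\le C\|w_h\|$. Set $\bm v_h:=\bm\Pi_h^2\bm w$, which is well defined since $\bm w\in[H^{\frac12+\delta}(\Omega)]^3$. Next I have to check $\bm v_h\in\mathring{\mathcal S}_{r-1}^{+,2}$: since $\bm w$ is globally $H^1$, the face DOFs (5)--(6) of $\bm\Pi_h^2\bm w$ agree from the two adjacent elements on every interior face, so $\bm v_h\in H(\div;\Omega)$ and $\langle[\![\bm v_h\times\bm n_f]\!]_f,\bm q\rangle_f=0$ for all $\bm q\in[P_{r-2}(f)]^2$ on interior faces; and since $\bm w$ vanishes on $\partial\Omega$, DOFs (5)--(6) on boundary faces vanish, giving $\bm v_h\in H_0(\div;\Omega)$ and the jump condition on boundary faces. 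Then the commuting relation $\div\bm\Pi_h^2\bm v=\bm\pi_h^3\div\bm v$ of Lemma \ref{commu-23}, together with the fact that $\bm\pi_h^3$ acts as the identity on $\mathcal S_{r-2}^3(\mathcal T_h)$, yields $\div\bm v_h=\bm\pi_h^3 w_h=w_h$, hence $b_2(\bm v_h,w_h)=(\div\bm v_h,w_h)=\|w_h\|^2$.

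For the norm bound I would use Theorem \ref{err:interpolation:2h} with $s=1$ and the triangle inequality: $\|\bm v_h\|_{1,h}=\|\bm\Pi_h^2\bm w\|_{1,h}\le\|\bm w\|_1+\|\bm w-\bm\Pi_h^2\bm w\|_{1,h}\le C\|\bm w\|_1\le C\|w_h\|$. Since $\nu\le M$, $\alpha\le M$ and $\div\bm v_h=w_h$,
\[
\3bar\bm v_h\3bar_2^2=\nu|\bm v_h|_{1,h}^2+\alpha\|\bm v_h\|^2+M\|\div\bm v_h\|^2\le M\|\bm v_h\|_{1,h}^2+M\|w_h\|^2\le C M\|w_h\|^2 ,
\]
so $\3bar\bm v_h\3bar_2\le C M^{1/2}\|w_h\|$. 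Combining with $b_2(\bm v_h,w_h)=\|w_h\|^2$ gives
$\sup_{\bm z_h\in\mathring{\mathcal S}_{r-1}^{+,2}} b_2(\bm z_h,w_h)/\3bar\bm z_h\3bar_2\ge \|w_h\|^2/\3bar\bm v_h\3bar_2\ge C^{-1}M^{-1/2}\|w_h\|$, which is \eqref{infsup}.

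The only step needing real care is the verification that $\bm\Pi_h^2\bm w$ lands in the constrained global space $\mathring{\mathcal S}_{r-1}^{+,2}$, i.e.\ both $H_0(\div)$-conformity and the extra tangential-jump continuity of the interpolant; this rests entirely on $\bm w\in[H_0^1(\Omega)]^3$ (so the relevant face moments are single-valued and vanish on $\partial\Omega$). Everything else is a direct application of Lemma \ref{commu-23} and the approximation estimate of Theorem \ref{err:interpolation:2h}.
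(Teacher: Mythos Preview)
Your proof is correct and follows essentially the same Fortin-operator argument as the paper: lift $w_h$ to $\bm w\in[H_0^1(\Omega)]^3$ via \cite{Girault2012Finite}, set $\bm v_h=\bm\Pi_h^2\bm w$, use Lemma~\ref{commu-23} to get $\div\bm v_h=w_h$, and bound $\3bar\bm v_h\3bar_2$ via Theorem~\ref{err:interpolation:2h}. Your explicit verification that $\bm\Pi_h^2\bm w\in\mathring{\mathcal S}_{r-1}^{+,2}$ is a nice addition that the paper leaves implicit.
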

\begin{proof}
	From \cite[Corollary 2.4]{Girault2012Finite}, for any $w_h\in \mathring{\mathcal S}_{r-2}^{3}\subset L_0^2(\Omega)$, there exists $\bm v\in  [H_0^1(\Omega)]^3$ satisfying
\begin{align*}
%	C\|q\|\leq \frac{b_{2,h}(\bm v,q)}{\|\bm v\|_2}.
\div\bm v= w_h\text{ \quad and \quad } \|\bm v\|_1\leq C\|w_h\|. 
\end{align*}
By \eqref{commute:a2}, it holds
\[\div \bm \Pi_h^2\bm v = \bm\pi_h^3\div\bm v = w_h.\]
In addition, Theorem \ref{err:interpolation:2h} leads to
\begin{equation*}
|\bm\Pi^2_h\bm{v}|_{m,K}\leq C_0\|\bm{v}\|_{1,K}\quad\text{ with $m=0,1$}.
\end{equation*}
Therefore, for any $w_h\in \mathring{\mathcal S}_{r-2}^{3}$,
\[\sup_{\bm v_h\in\mathring{\mathcal S}_{r-1}^{+,2}}\frac{b_{2}(\bm v_h,w_h)}{\3bar\bm v_h\3bar_2}\geq\frac{b_{2}(\bm\Pi^2_h\bm v,w_h)}{\3bar\bm\Pi^2_h\bm v\3bar_2}\geq \frac{\|w_h\|^2}{\sqrt{M}\big(\sqrt{2}C_0\|\bm v\|_{1}+\|w_h\|\big)}\geq
c\|w_h\|\]
with $c=\frac{1}{\sqrt{M}(1+\sqrt{2}C_0)}$.
\end{proof}

In addition to the inf-sup condition, the following lemma presents an additional condition to 
make the numerical scheme uniformly stable with respect to $\nu$. 

\begin{lemma}\label{div-free}
Define
	\[Z_h:=\{\bm v\in\mathring{\mathcal S}_{r-1}^{+,2}:b_{2}(\bm v,w)=0\ \ \forall w\in \mathring{\mathcal S}_{r-2}^{3}\}.\] Then we have
\begin{align}\label{div-free2}
	Z_h =\{\bm v\in\mathring{\mathcal S}_{r-1}^{+,2}:\div\bm v =0\}. 
\end{align}
\end{lemma}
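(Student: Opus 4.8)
The plan is to exploit the direct sum decomposition $\mathcal S_{r-1}^{+,2}(K)=\mathcal S_{r-1}^{2}(K)\oplus\mathcal U^{r-2}(K)$ from Lemma~\ref{direct:sum:div}, where $\mathcal U^{r-2}(K)=\curl\mathcal V^{r-2}(K)$, together with the elementary fact that $\div\curl\equiv 0$ and $\div\mathcal S_{r-1}^{2}(K)\subseteq P_{r-2}(K)$. The inclusion $\{\bm v\in\mathring{\mathcal S}_{r-1}^{+,2}:\div\bm v=0\}\subseteq Z_h$ is immediate, since $b_2(\bm v,w)=(\div\bm v,w)=0$ for every $w\in\mathring{\mathcal S}_{r-2}^{3}$ whenever $\div\bm v=0$; so the content of the lemma is the reverse inclusion.

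For the reverse inclusion, let $\bm v\in Z_h$. The first step is to verify that $\div\bm v$ is itself an admissible test function, i.e. $\div\bm v\in\mathring{\mathcal S}_{r-2}^{3}$. On each $K\in\mathcal T_h$, writing $\bm v|_K=\bm v_0+\curl\bm v_b$ with $\bm v_0\in\mathcal S_{r-1}^{2}(K)$ and $\bm v_b\in\mathcal V^{r-2}(K)$, we get $\div\bm v|_K=\div\bm v_0\in P_{r-2}(K)$, so $\div\bm v$ is a piecewise polynomial of degree at most $r-2$. Moreover, since $\bm v\in H_0(\div;\Omega)$, the divergence theorem gives $\int_\Omega\div\bm v\,\d V=\int_{\partial\Omega}\bm v\cdot\bm n\,\d A=0$, hence $\div\bm v\in L_0^2(\Omega)$, and therefore $\div\bm v\in\mathring{\mathcal S}_{r-2}^{3}$.

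With this in hand the conclusion is one line: choosing $w=\div\bm v$ in the defining relation of $Z_h$ yields $0=b_2(\bm v,\div\bm v)=\|\div\bm v\|^2$, whence $\div\bm v=0$ and $\bm v$ lies in the right-hand side of \eqref{div-free2}. There is no genuine obstacle; the only point that needs a moment's care is precisely the verification of the previous paragraph — that $\div\bm v$ belongs to $\mathring{\mathcal S}_{r-2}^{3}$, using both the shape-function decomposition (for the pointwise polynomial degree) and the vanishing normal trace of $\bm v$ (for the zero-mean condition) — since this is what makes it legitimate to test against $\div\bm v$ itself.
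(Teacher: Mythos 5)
Your proof is correct and follows the same underlying argument as the paper: the paper disposes of the lemma by citing Theorem \ref{exactness2}, whose content includes precisely the containment $\div\mathring{\mathcal S}_{r-1}^{+,2}\subseteq\mathring{\mathcal S}_{r-2}^{3}$ that you verify by hand (piecewise degree via the decomposition of Lemma \ref{direct:sum:div} and $\div\curl=0$, zero mean via the vanishing normal trace), after which testing with $w=\div\bm v$ finishes the argument. Your version is simply a self-contained unpacking of that citation, and it is sound.
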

\begin{proof}
	The lemma is a direct result of Theorem \ref{exactness2}. 
\end{proof}

With Lemma \ref{div-free}, it is clear that 
\[a_{2,h}(\bm v_h,\bm v_h)=\3bar\bm v_h\3bar_2^2\quad \text{for }\bm v_h\in Z_h.\]

According to \cite[Theorem 3.1]{xie2008uniformly}, 
the finite element scheme \eqref{FEM-brinkman} is uniformly stable with respect to $\nu$ under the norm $\3bar\cdot\3bar_2$.

%\noindent The condition \eqref{div-free} holds if the following stronger condition is satisfied
%\begin{align}\label{div-free2}
%\div \mathring{\mathcal S}_{r-1}^{+,2}\subseteq \mathring{\mathcal S}_{r-2}^{3}.
%\end{align}

%According to Theorem \ref{stalbecond}, the $\div$-conforming finite element presented in Section \ref{derhamFEM} is a good candidate for the space $\mathring{\mathcal S}_{r-1}^{+,2}$ since it satisfies both the conditions \eqref{infsup} and \eqref{div-free2} and it has fewer DOFs compared to the  RT elements. However, to make the numerical scheme converge, we need some tangential continuity across interior faces according to the following theorem.

We will use the following standard result to show the convergence of scheme \eqref{FEM-brinkman}.
\begin{proposition}\label{converg}(\cite[Theorem 3.2]{xie2008uniformly})
Let $(\bm{u};p)\in [H^{1}_{0}(\Omega)]^{3}\times L^{2}(\Omega)$ be the solution of the variational problem \eqref{variation-prob-brinkman}.
	Assume that  $\mathring{\mathcal S}_{r-1}^{+,2}\subset H_0(\div;\Omega)$ satisfy \eqref{infsup} and \eqref{div-free2}. Then the problem \eqref{FEM-brinkman} has a unique solution $(\bm u_h,p_h)$  such that
	\begin{align*}
		\3bar\bm u-\bm u_h\3bar_2&\leq C\Big(\inf_{\bm v\in \mathring{\mathcal S}_{r-1}^{+,2}}\3bar\bm u-\bm v\3bar_2+\sup_{\bm w\in \mathring{\mathcal S}_{r-1}^{+,2}\backslash\{0\}}\frac{E_{2,h}(\bm u,\bm w)}{\3bar\bm w\3bar_2}\Big),\\
		\|p-p_h\|&\leq C\Big(\inf_{w\in \mathring{\mathcal S}_{r-2}^{3}}\|p-w\|+\inf_{\bm v\in \mathring{\mathcal S}_{r-1}^{+,2}}\3bar\bm u-\bm v\3bar_2+\sup_{\bm w\in \mathring{\mathcal S}_{r-1}^{+,2}\backslash\{0\}}\frac{E_{2,h}(\bm u,\bm w)}{\3bar\bm w\3bar_2}\Big),
	\end{align*}
	where the consistency error 
	\begin{equation}\label{brinkman:consistant}
	E_{2,h}(\bm u,\bm v)=\sum_{f\in \mathcal F_h}\langle \nu\curl\bm u,[\![\bm v\times \bm n_f]\!]_f\rangle_f.
	\end{equation}
\end{proposition}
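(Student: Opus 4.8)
The plan is to obtain this as an instance of the standard abstract error theory for nonconforming mixed (saddle-point) approximations — a second Strang lemma in the Brezzi framework — the one non-generic ingredient being the pointwise divergence constraint \eqref{div-free2}. Write $\|\bm v\|_{a_{2,h}}^2:=a_{2,h}(\bm v,\bm v)$, so that $\3bar\bm v\3bar_2^2=\|\bm v\|_{a_{2,h}}^2+M\|\div\bm v\|^2$ and $\|\div\bm v\|\le M^{-1/2}\3bar\bm v\3bar_2$, and set $Z_h(g):=\{\bm v\in\mathring{\mathcal S}_{r-1}^{+,2}:b_2(\bm v,w)=(g,w)\ \forall w\in\mathring{\mathcal S}_{r-2}^{3}\}$, so $Z_h=Z_h(0)$ and $\bm u_h\in Z_h(g)$. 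The first step is well-posedness: boundedness of $a_{2,h}$ and $b_2$ in the norms $(\3bar\cdot\3bar_2,\|\cdot\|)$ is immediate from Cauchy--Schwarz, the inf-sup condition is exactly \eqref{infsup}, and coercivity on the discrete kernel is where \eqref{div-free2} enters — for $\bm v\in Z_h$ one has $\div\bm v\equiv 0$, hence $\3bar\bm v\3bar_2^2=a_{2,h}(\bm v,\bm v)$. The classical Babuška--Brezzi theorem then gives a unique $(\bm u_h;p_h)$ solving \eqref{FEM-brinkman}.

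Next I would derive the consistency identity. Multiplying the first equation of \eqref{prob-brinkman} by $\bm v_h\in\mathring{\mathcal S}_{r-1}^{+,2}$, integrating over each $K\in\mathcal T_h$, integrating by parts, and summing, one uses that $\nu\grad\bm u$ and $p$ are single-valued on interior faces while $\mathring{\mathcal S}_{r-1}^{+,2}\subset H_0(\div;\Omega)$ forces $[\![\bm v_h]\!]_f$ to be tangential, together with the pointwise identity $(\grad\bm u)\bm n_f=\grad(\bm u\cdot\bm n_f)-\curl\bm u\times\bm n_f$ and a face-wise integration by parts (the $\grad(\bm u\cdot\bm n_f)$ term being annihilated by the tangential face moments in the definition of $\mathring{\mathcal S}_{r-1}^{+,2}$), to arrive at
\begin{equation*}
a_{2,h}(\bm u,\bm v_h)-b_2(\bm v_h,p)=(\bm f,\bm v_h)+E_{2,h}(\bm u,\bm v_h),\qquad \forall\,\bm v_h\in\mathring{\mathcal S}_{r-1}^{+,2},
\end{equation*}
with $E_{2,h}$ as in \eqref{brinkman:consistant}.

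For the velocity estimate, take any $\bm v_h\in Z_h(g)$; then $\bm v_h-\bm u_h\in Z_h$, so kernel coercivity gives $c\3bar\bm v_h-\bm u_h\3bar_2^2\le a_{2,h}(\bm v_h-\bm u_h,\bm v_h-\bm u_h)$. Expanding, substituting the discrete equation $a_{2,h}(\bm u_h,\cdot)=(\bm f,\cdot)+b_2(\cdot,p_h)$ and the consistency identity, and using $b_2(\bm v_h-\bm u_h,p_h)=b_2(\bm v_h-\bm u_h,p)=0$ because $\div(\bm v_h-\bm u_h)\equiv 0$ (the decisive payoff of \eqref{div-free2}: the velocity error decouples completely from the pressure), one is left with $c\3bar\bm v_h-\bm u_h\3bar_2^2\le a_{2,h}(\bm v_h-\bm u,\bm v_h-\bm u_h)+E_{2,h}(\bm u,\bm v_h-\bm u_h)$; dividing and applying the triangle inequality bounds $\3bar\bm u-\bm u_h\3bar_2$ by $C\big(\inf_{\bm v_h\in Z_h(g)}\3bar\bm u-\bm v_h\3bar_2+\sup_{\bm w}E_{2,h}(\bm u,\bm w)/\3bar\bm w\3bar_2\big)$, and a Fortin correction — given $\bm v_h\in\mathring{\mathcal S}_{r-1}^{+,2}$, add $\bm\delta_h\in\mathring{\mathcal S}_{r-1}^{+,2}$ solving $b_2(\bm\delta_h,w)=b_2(\bm u-\bm v_h,w)$ with $\3bar\bm\delta_h\3bar_2\le C\3bar\bm u-\bm v_h\3bar_2$ via \eqref{infsup} — replaces the infimum over $Z_h(g)$ by that over all of $\mathring{\mathcal S}_{r-1}^{+,2}$, giving the claimed velocity bound. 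For the pressure, given any $q_h\in\mathring{\mathcal S}_{r-2}^{3}$, \eqref{infsup} yields $c\|p_h-q_h\|\le\sup_{\bm v_h}b_2(\bm v_h,p_h-q_h)/\3bar\bm v_h\3bar_2$; rewriting $b_2(\bm v_h,p_h-q_h)=a_{2,h}(\bm u_h-\bm u,\bm v_h)+b_2(\bm v_h,p-q_h)-E_{2,h}(\bm u,\bm v_h)$ via the discrete equation and the consistency identity and bounding each term gives $\|p_h-q_h\|\le C\big(\3bar\bm u-\bm u_h\3bar_2+\|p-q_h\|+\sup_{\bm w}E_{2,h}(\bm u,\bm w)/\3bar\bm w\3bar_2\big)$; the triangle inequality, the velocity bound, and the infimum over $q_h$ finish the pressure estimate.

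The main obstacle is really only the face-term bookkeeping in the consistency identity — showing that all interior-face contributions collapse to the single curl-jump functional $E_{2,h}$ — which relies on the normal-continuity of $\mathring{\mathcal S}_{r-1}^{+,2}$ and on precisely which tangential face moments enter its degrees of freedom; the small Fortin step in the velocity estimate is the only other non-automatic point. Everything else is the routine Brezzi/Strang machinery, which is why the statement can simply be cited from \cite{xie2008uniformly}.
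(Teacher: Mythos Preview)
The paper does not prove this proposition at all: it is quoted verbatim as Theorem~3.2 of \cite{xie2008uniformly}, with the consistency functional specialized to the present setting. So there is no ``paper's own proof'' to compare against; your sketch supplies exactly what the paper outsources. The abstract part of your argument --- boundedness, the inf-sup condition \eqref{infsup}, kernel coercivity from \eqref{div-free2}, the second-Strang manipulation on $Z_h(g)$, and the Fortin lift to pass from $Z_h(g)$ to all of $\mathring{\mathcal S}_{r-1}^{+,2}$ --- is the standard Brezzi machinery and is carried out correctly; this is precisely the content of \cite[Theorem~3.2]{xie2008uniformly}.

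The one point that deserves a flag is your reduction of the raw consistency error $\sum_f\langle\nu\,\partial_{\bm n_f}\bm u,[\![\bm v_h]\!]_f\rangle_f$ to the curl form \eqref{brinkman:consistant}. Using the identity $\partial_{\bm n_f}\bm u=\grad(\bm u\cdot\bm n_f)+\curl\bm u\times\bm n_f$ (your sign is off, but that is harmless) and the tangentiality of $[\![\bm v_h]\!]_f$, the curl piece indeed gives $E_{2,h}$ up to sign; the leftover term is $\sum_f\langle\nu\,\grad_f(\bm u\cdot\bm n_f),[\![\bm v_h]\!]_f\rangle_f$. Your claim that this is ``annihilated by the tangential face moments in the definition of $\mathring{\mathcal S}_{r-1}^{+,2}$'' is not right as stated: those moments control $\langle[\![\bm v_h\times\bm n_f]\!]_f,\bm q\rangle_f$ for $\bm q\in[P_{r-2}(f)]^2$, not the surface divergence $\div_f[\![\bm v_h]\!]_f$ or the edge contributions that appear after a face integration by parts. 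In the cited reference the result is stated with an abstract residual $E_h(\bm u,\bm v_h):=a_{2,h}(\bm u,\bm v_h)-b_2(\bm v_h,p)-(\bm f,\bm v_h)$; the curl-only expression \eqref{brinkman:consistant} is the form the present authors adopt, and in the subsequent estimate (Theorem~\ref{convergence-num-sol}) only this part is actually needed because the full $\partial_{\bm n_f}\bm u$ term obeys the same bound. So the gap does not affect the final error analysis, but your justification for dropping the $\grad_f(\bm u\cdot\bm n_f)$ contribution is not the mechanism you name.
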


\begin{theorem}\label{convergence-num-sol}
Let $(\bm{u};p)\in [H^{1}_{0}(\Omega)]^{3}\times L^{2}(\Omega)$ be the solution of the variational problem \eqref{variation-prob-brinkman} and $(\bm{u}_{h};p_{h})\in \mathring{\mathcal S}_{r-1}^{+,2}\times  \mathring{\mathcal S}_{r-2}^{3}$ be the solution of the discrete problem \eqref{FEM-brinkman}. Suppose that $\bm{u} \in [H^{s}(\Omega)]^{3}$ with $2\leq s\leq r$, then there holds
\begin{equation}\label{err:u:anorm}
\3bar\bm{u}-\bm{u}_{h}\3bar_2\leq C(\nu^{1/2}h^{s-1}+\alpha^{1/2}h^{s})|\bm{u}|_{s}.
\end{equation}
For $p\in H^{s-1}(\Omega)$, there holds
\begin{equation}\label{err:p:L2norm}
\|p-p_{h}\|\leq C\big(h^{s-1}|p|_{s-1}+(\nu^{1/2}h^{s-1}+\alpha^{1/2}h^{s})|\bm{u}|_{s}\big).
\end{equation}
\end{theorem}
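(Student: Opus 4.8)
The plan is to invoke the abstract a priori estimate of Proposition~\ref{converg} and then bound, in turn, the three quantities appearing on its right-hand side: the best-approximation error $\inf_{\bm v\in\mathring{\mathcal S}_{r-1}^{+,2}}\3bar\bm u-\bm v\3bar_2$, the pressure best-approximation error $\inf_{w\in\mathring{\mathcal S}_{r-2}^{3}}\|p-w\|$, and the consistency error $\sup_{\bm w\in\mathring{\mathcal S}_{r-1}^{+,2}\setminus\{0\}}E_{2,h}(\bm u,\bm w)/\3bar\bm w\3bar_2$. Well-posedness of \eqref{FEM-brinkman} and the two abstract bounds are furnished directly by Proposition~\ref{converg}, whose hypotheses \eqref{infsup} and \eqref{div-free2} have already been verified.

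For the velocity approximation error I would take $\bm v=\bm\Pi_h^2\bm u$ and split $\3bar\cdot\3bar_2^2$ into its constituents $\nu\,|\cdot|_{1,h}^2+\alpha\|\cdot\|^2+M\|\div\cdot\|^2$. The broken $H^1$-seminorm and the $L^2$-norm are controlled by Theorem~\ref{err:interpolation:2h}, giving $|\bm u-\bm\Pi_h^2\bm u|_{1,K}\le Ch_K^{s-1}|\bm u|_{s,K}$ and $\|\bm u-\bm\Pi_h^2\bm u\|_K\le Ch_K^{s}|\bm u|_{s,K}$; for the divergence term one uses the commuting identity $\div\bm\Pi_h^2\bm v=\bm\pi_h^3\div\bm v$ of Lemma~\ref{commu-23}, so that $\div(\bm u-\bm\Pi_h^2\bm u)=\div\bm u-\bm\pi_h^3\div\bm u$ is an $L^2$-projection error with the standard bound $Ch_K^{s-1}|\bm u|_{s,K}$. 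Summing over $K$ yields $\3bar\bm u-\bm\Pi_h^2\bm u\3bar_2\le C(\nu^{1/2}h^{s-1}+\alpha^{1/2}h^{s})|\bm u|_{s}$. For the pressure I would take $w=\bm\pi_h^3 p$, the $L^2$-projection onto $\mathring{\mathcal S}_{r-2}^{3}$, which gives $\|p-\bm\pi_h^3 p\|\le Ch^{s-1}|p|_{s-1}$ by a routine approximation estimate.

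The core of the proof is the consistency error. For $\bm w\in\mathring{\mathcal S}_{r-1}^{+,2}$ the defining weak continuity $\langle[\![\bm w\times\bm n_f]\!]_f,\bm q\rangle_f=0$ for all $\bm q\in[P_{r-2}(f)]^2$ allows subtracting the face $L^2$-projection $\mathcal P_f^{r-2}$ of $\curl\bm u$, so that
\[
E_{2,h}(\bm u,\bm w)=\nu\sum_{f\in\mathcal F_h}\big\langle\curl\bm u-\mathcal P_f^{r-2}\curl\bm u,\ [\![\bm w\times\bm n_f]\!]_f\big\rangle_f .
\]
Since $[P_{r-2}(f)]^2$ contains constants, the same continuity (together with the continuity of the normal trace of $\bm w\in H(\div;\Omega)$) forces $\int_f[\![\bm w]\!]_f=0$ on every face, and a local Poincar\'e/trace estimate then gives $\|[\![\bm w\times\bm n_f]\!]_f\|_f\le Ch_K^{1/2}|\bm w|_{1,\omega_f}$ on the face patch $\omega_f$; this zero-mean exploitation is essential, since the crude bound $\|[\![\bm w\times\bm n_f]\!]_f\|_f\le C(h_K^{-1/2}\|\bm w\|_{\omega_f}+h_K^{1/2}|\bm w|_{1,\omega_f})$ would leave a spurious $h^{s-2}$ term. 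On the other factor, a scaled trace inequality and the approximation property of $\mathcal P_f^{r-2}$ (using $s-2\le r-2$ and $\curl\bm u\in[H^{s-1}(K)]^3$) give $\|\curl\bm u-\mathcal P_f^{r-2}\curl\bm u\|_f\le Ch_K^{s-3/2}|\bm u|_{s,K}$. Cauchy--Schwarz and summation then yield $|E_{2,h}(\bm u,\bm w)|\le C\nu h^{s-1}|\bm u|_{s}\,|\bm w|_{1,h}$, and since $\nu|\bm w|_{1,h}^2\le\3bar\bm w\3bar_2^2$ we conclude $\sup_{\bm w}E_{2,h}(\bm u,\bm w)/\3bar\bm w\3bar_2\le C\nu^{1/2}h^{s-1}|\bm u|_{s}$.

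Feeding these three bounds into Proposition~\ref{converg} delivers \eqref{err:u:anorm} and \eqref{err:p:L2norm}. I expect the main obstacle to be precisely the consistency step: one must both (i) use the $\mathcal P_f^{r-2}$-orthogonality to strip off the low-order part of $\curl\bm u$ on each face, and (ii) use the zero-mean property of the tangential jump $[\![\bm w\times\bm n_f]\!]_f$ so that it is controlled by $|\bm w|_{1,h}$ alone (hence by $\nu^{-1/2}\3bar\bm w\3bar_2$), which is what produces the correct single power $\nu^{1/2}$ in front of $h^{s-1}|\bm u|_s$ rather than an unfavourable power of $h$ or of $M$. The remaining ingredients are standard scaling, trace, and interpolation estimates.
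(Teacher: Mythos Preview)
Your proposal is correct and follows essentially the same route as the paper: invoke Proposition~\ref{converg}, bound the velocity best-approximation by taking $\bm v=\bm\Pi_h^2\bm u$ and using Theorem~\ref{err:interpolation:2h}, bound the pressure best-approximation by $\bm\pi_h^3p$, and estimate the consistency error by inserting the face projection $\mathcal P_f^{r-2}$ via the weak tangential continuity of $\mathring{\mathcal S}_{r-1}^{+,2}$. The only cosmetic difference is in the second half of the consistency step: the paper rewrites the face sum as an element sum and, using that $\curl\bm u-\mathcal P_f^{r-2}\curl\bm u$ is orthogonal to constants, subtracts $\mathcal P_f^{0}\bm w_h$ elementwise (exactly as in \eqref{consistent:err:22}); you instead stay with the face jumps and exploit their zero mean via a Poincar\'e--trace bound. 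These are two equivalent renderings of the same standard argument and yield the identical bound $|E_{2,h}(\bm u,\bm w)|\le C\nu^{1/2}h^{s-1}|\bm u|_s\3bar\bm w\3bar_2$.
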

\begin{proof}
%We will apply Theorem \ref{converg} to prove \eqref{err:u:anorm} and \eqref{err:p:L2norm} since the conditions of Theorem \ref{converg} are satisfied. 
%
In light of Theorem \ref{err:interpolation:2h}, for $\bm{u}\in  [H^{s}(\Omega)]^{3}$ with $2\leq s\leq r$,  there holds
\begin{equation}\label{interpolation:pi2h:a1h}
\begin{split}
\3bar\bm{u}-\bm\Pi^2_h\bm{u}\3bar_2&\leq C \left(\nu^{1/2}|\bm{u}-\bm\Pi^2_h\bm{u}|_{{1},h}+\alpha^{1/2}\|\bm{u}-\bm\Pi^2_h\bm{u}\|+M^{1/2}\|\div\bm{u}-\div\bm\Pi^2_h\bm{u}\|\right)\\
&\leq C \left(\nu^{1/2}h^{s-1}+\alpha^{1/2}h^{s}+M^{1/2}h^{s-1}\right)|\bm{u}|_{s}. 
\end{split}
\end{equation}
For $p \in  H^{s-1}(\Omega)$, there holds the following the estimation:
\begin{equation}\label{projection:wh:err}
\|p-\bm\pi^3_{h}p\|\leq C h^{s-1}|p|_{s-1}.
\end{equation}
%can be obtained by \cite[Theorem 3.3.3]{Wang-Shi-FEMbook}. 

To estimate the consistency error \eqref{brinkman:consistant},
let $\bm{w}_{h}\in \mathring{\mathcal S}_{r-1}^{+,2}$. 
%Then we have
%\begin{equation}\label{continuitycondition:t}
%\langle[\![\bm{w}_{h}\times \bm{n}]\!]_f, \bm{q}\rangle_{f}=0\quad\text{for all}\, \, \bm{q}\in [P_{r-2}(f)]^{2} \text{ and } f\in\mathcal{F}_{h}.
%\end{equation}
Proceeding as in \eqref{consistent:err:22}, the consistent error has the following estimate
%\begin{equation}
%\begin{split}
%|E_{2,h}(\bm u,\bm w_{h})|&=\Big|\sum_{f\in \mathcal F_h}\langle \nu\curl\bm u,[\![\bm w_{h}\times \bm n_f]\!]_f\rangle_f\Big|\\
%&=\Big|\sum_{f\in \mathcal F_h}\nu\langle \curl\bm u-\mathcal{P}_{f}^{r-2}(\curl\bm u),[\![\bm w_{h}\times \bm n_f- \mathcal{P}_{f}^{0}(\bm w_{h}\times \bm n_f)]\!]_f\rangle_f\Big|\\
%\end{split}
%\end{equation}
%where $\mathcal{P}_{f}^{r-3}$ and $\mathcal{P}_{f}^{0}$ are the orthogonal projection operators from $[L^{2}(f)]^{3}$ to $[P_{r-3}(f)]^{3}$ and $[P_{0}(f)]^{3}$, respectively. The error estimate for these orthogonal projection operators \cite[Theorem 3.3.5]{Wang-Shi-FEMbook} and Schwarz inequality lead to 
\begin{equation}
\label{consistent:err:1}
\begin{split}
&E_{2,h}(\bm u,\bm w_h)=\sum_{f\in \mathcal F_h}\langle \nu\curl\bm u,[\![\bm w_h\times \bm n_f]\!]_f\rangle_f \\
%&=\nu\sum_{f\in \mathcal F_h}\langle \curl\bm u-\mathcal P_{f}^{r-2}\curl\bm u,[\![\bm w_h\times \bm n_f]\!]_f\rangle_f\\
%&=\nu\sum_{K\in \mathcal T_h}\langle \curl\bm u-\mathcal P_{f}^{r-2}\curl\bm u,\bm w_h\times \bm n_K\rangle_{\partial K}\\
=\ &\nu\sum_{K\in \mathcal T_h}\sum_{f\in \mathcal F_h(K)}\langle \curl\bm u-\mathcal P_{f}^{r-2}\curl\bm u,(\bm w_h-\mathcal P_{f}^{0}\bm w_h)\times \bm n_{f,K}\rangle_{f}\\
\leq\  &C\nu^{1/2} h^{s-1}|\bm u|_{s}\3bar\bm{w}_{h}\3bar_2.
%|E_{2,h}(\bm u,\bm w_{h})|
%&\leq C \nu^{1/2} h^{s-1}|\bm u|_{s}\|\bm{w}_{h}\|\gc,h_{2,h}}.
\end{split}
\end{equation}
According to Proposition \ref{converg}, we can arrive at \eqref{err:u:anorm} and \eqref{err:p:L2norm}
by combining \eqref{interpolation:pi2h:a1h}, \eqref{projection:wh:err}, and \eqref{consistent:err:1}.
\end{proof}
\begin{remark}\label{high-conv-Brinkman}
When $\nu$ is sufficiently small, the second term $\alpha^{1/2}h^{s}|\bm{u}|_{s}$ in the right hand side of \eqref{err:u:anorm} will dominate, and consequently, $\3bar\bm u-\bm u_h\3bar_2$ will have one-order higher accuracy.
\end{remark}

\begin{remark}
From the definition of $\mathring{\mathcal S}_{r-1}^{+,2}$, $\int_f [\![\bm v_h]\!]\d S=0$ for all $f\in \mathcal F_h$ and $\bm v_h\in \mathring{\mathcal S}_{r-1}^{+,2}$.
By \cite[(1.3)]{brenner2003poincare}, we have
$|\bm v_h|_{1,h}\geq C \|\bm v_h\|_{1,h}$,
where  $\|\cdot\|_{1,h}=\sum\limits_{K\in\mathcal T_h}\|\cdot\|_{1,K}$. Then the nonconforming element can also be used to solve Stokes problems with $\alpha = 0$ in \eqref{prob-brinkman}.
\end{remark}

\section{Numerical experiments}
\begin{example}
\end{example}
We use the finite element spaces $\mathring{\mathcal S}_{r-1,r}^{+,1}$ and $\mathring{\mathcal S}_{r}^0$, $r=2,3$, to solve the $-\curl\Delta\curl$ problem with $\gamma =1 $ and $\mu=1$, $10^{-2}$, $10^{-4}$, and $10^{-6}$ on $\Omega =[0,1]\times[0,1]\times[0,1]$. We use the exact solution
\begin{align*}
\bm u =&\left(
\begin{array}{c}
\sin^3(\pi x)\sin^2(\pi y)\sin^2(\pi z)\cos(\pi y)\cos(\pi z) \\
\sin^3(\pi y)\sin^2(\pi z)\sin^2(\pi x)\cos(\pi z)\cos(\pi x)\\
-2\sin^3(\pi z)\sin^2(\pi x)\sin^2(\pi y)\cos(\pi x)\cos(\pi y)
\end{array}
\right).
\end{align*}
Then the right-hand side can be derived by a simple calculation. In Figure \ref{fig:rate2}, we plot the errors $\bm u-\bm u_h$ in the sense of different norms. 
As evident from Figure \ref{fig:rate2}, the errors converge to 0 with order $r-1$ for different values of $\mu$. It can also be observed in Figure \ref{gcerror} that the convergence rate of $\3bar\bm{u}-\bm{u}_h\3bar_1$ approaches to $r$ when $\mu=10^{-6}$, which is consistent with the observation in Remark \ref{high-conv-quadcurl}.

%\begin{table}[h]
%	\centering
%	\caption{Numerical results by using the lowest-order   $H(\grad\curl)$ nonconforming element} \label{tab1}
%	\begin{tabular}{cccccccc}
%		\hline
%		$h$ &$\left\|\bm e_h\right\|$&rates&$\left\|\nabla\times\bm e_h\right\|$&rates&$\left\|(\nabla\times)^2\bm e_h\right\|$& rates\\
%		\hline
%		$1\slash 8$ & 4.4096438107044e-03&2.5193& 7.7902828669093e-02&2.2247&3.3025950534052e+00 
%		&0.9968  \\
%		$1\slash 10$&2.5133750608893e-03&2.3265&4.7419418073851e-02 &2.1350&2.6439856815847e+00 		&0.9899 \\
%		$1\slash 12$&1.6445364076257e-03&-&3.2129238459824e-02&-&2.2073965219030e+00
%&\\
%			\hline
%	\end{tabular}
%\end{table}
\begin{figure}[h!]
\captionsetup[subfigure]{font=footnotesize}
\centering
\subfloat[The convergence rate of $\|\bm{u}-\bm{u}_h\|$.]{\label{uherror} \includegraphics[width=0.5\textwidth]{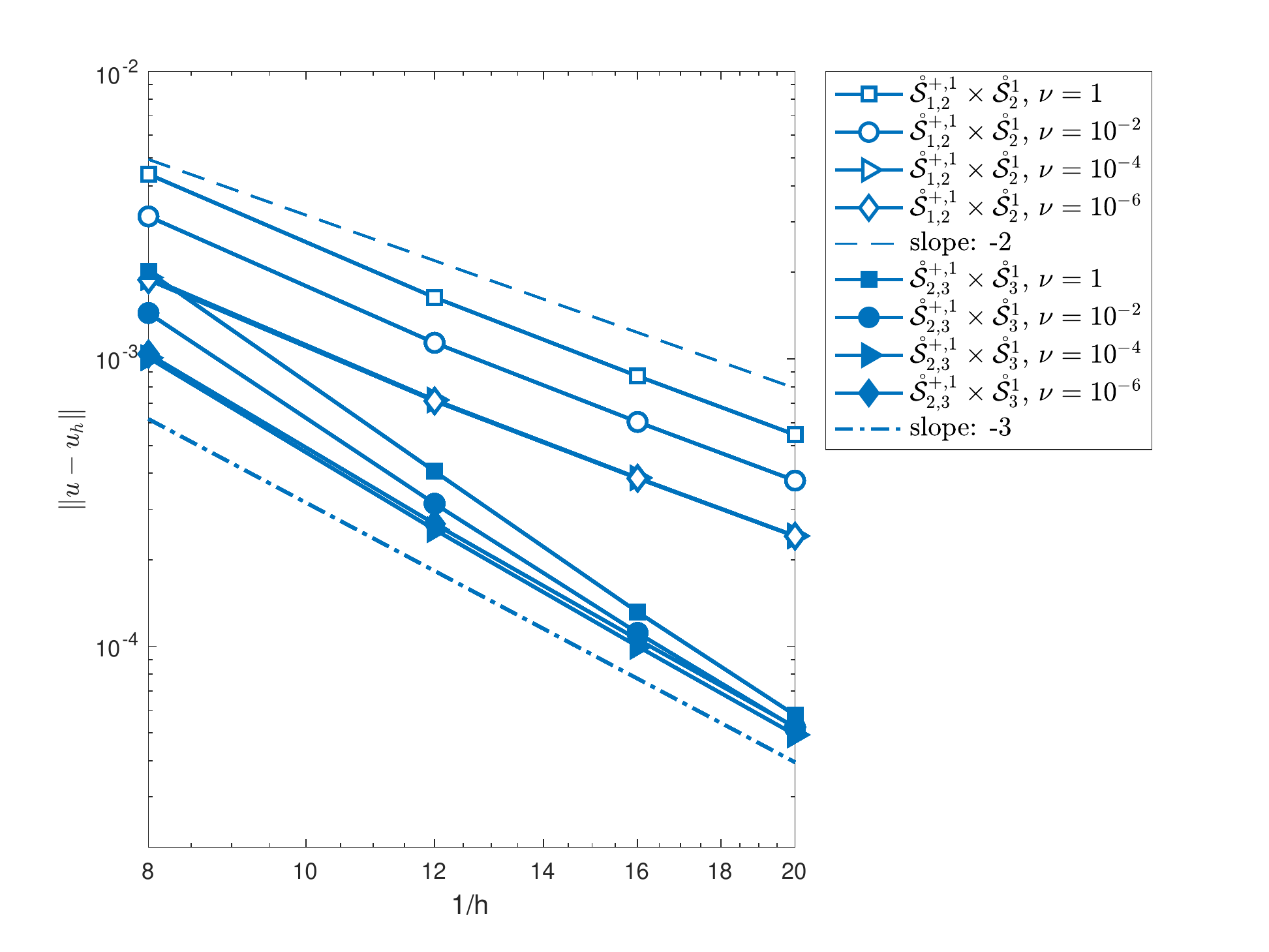}}%请勿换行
\subfloat[The convergence rate of $\|\curl\bm u -\curl\bm u_h\|$.]{\label{curluherror}\includegraphics[width=0.5\textwidth]{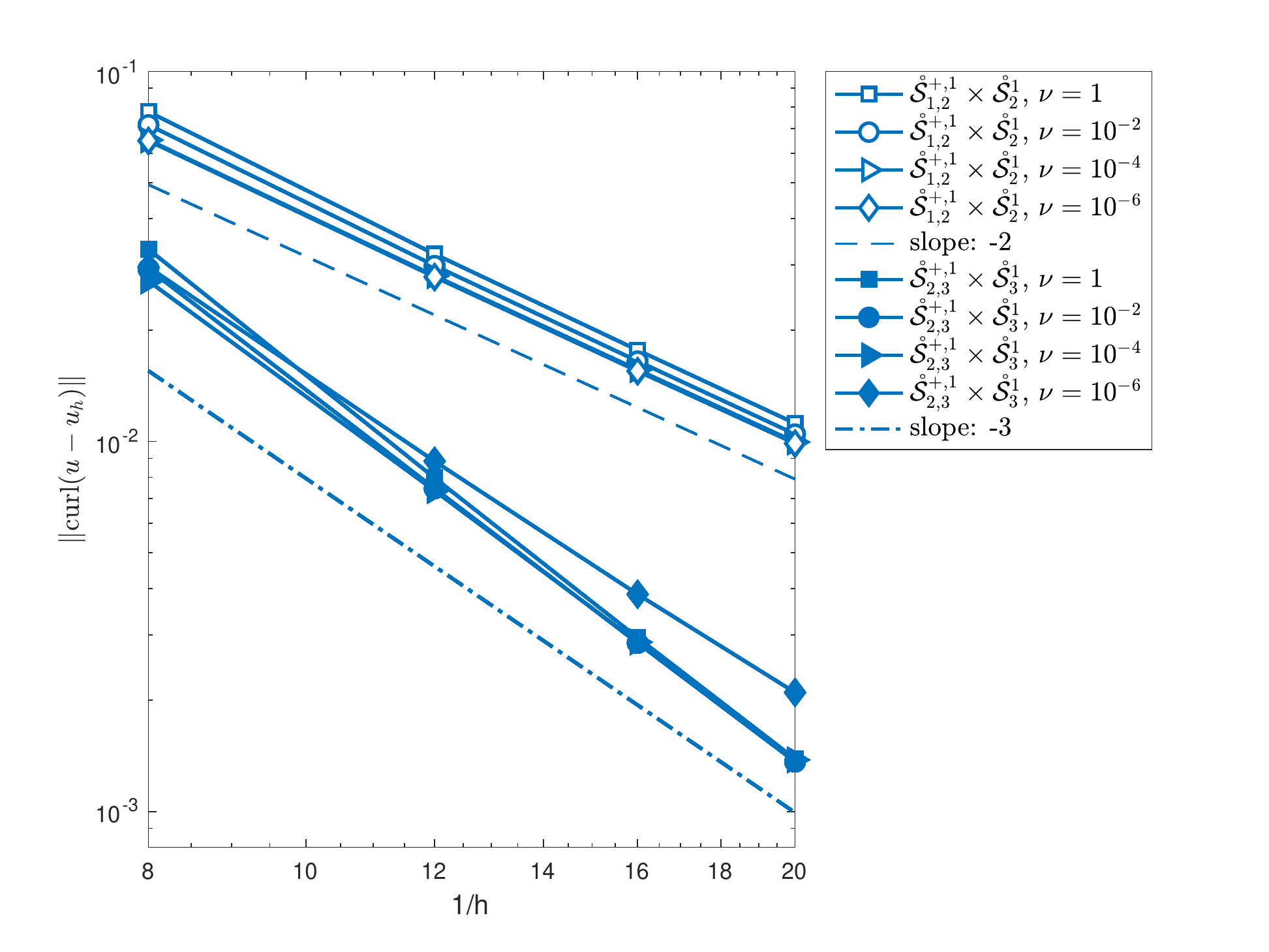}}\\
\subfloat[The convergence rate of $|\curl\bm{u}-\curl\bm{u}_h|_{1,h}$.]{\label{curlcurluherror}\includegraphics[width=0.5\textwidth]{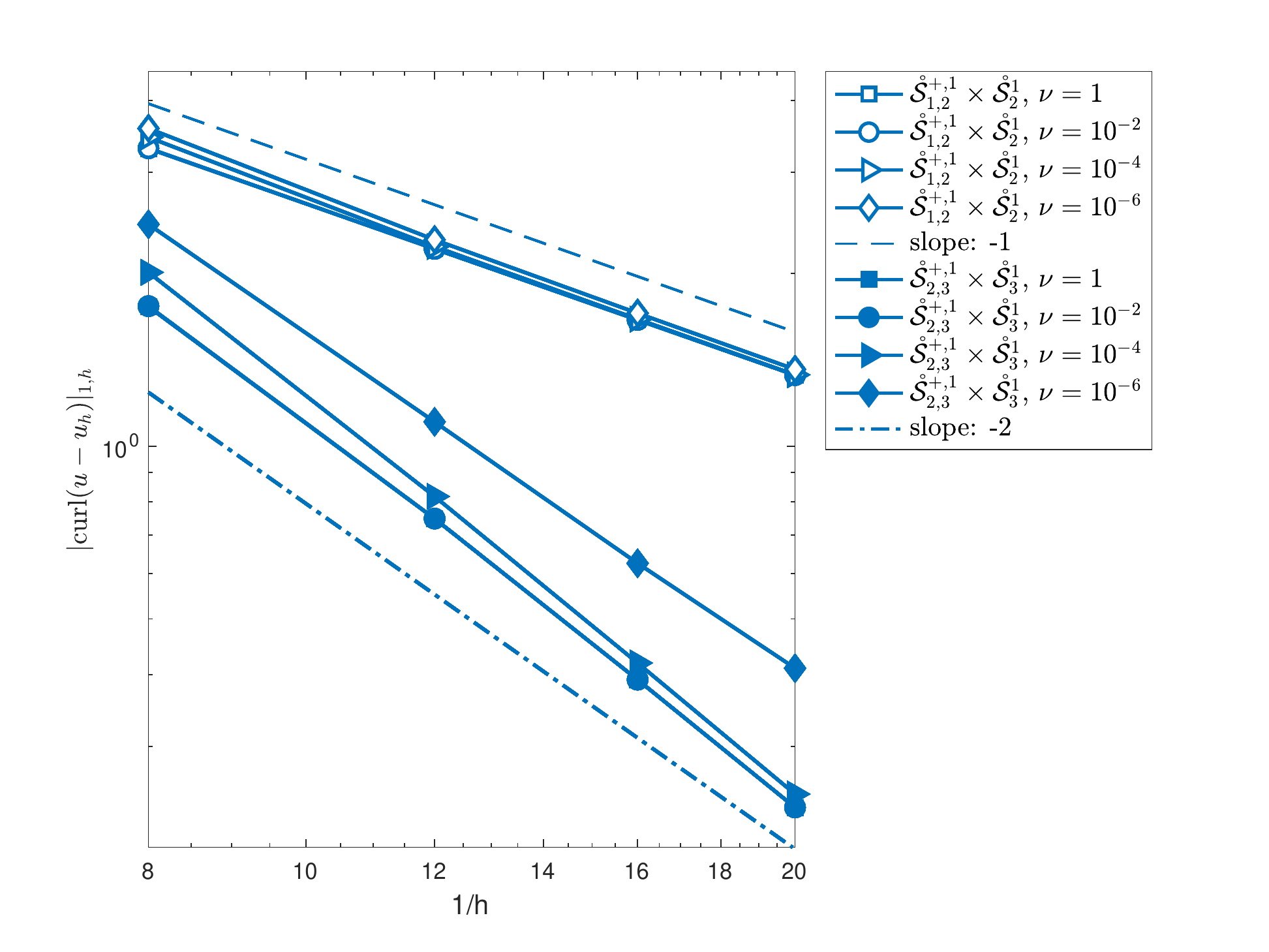}}
\subfloat[The convergence rate of $\3bar\bm{u}-\bm{u}_h\3bar_1$.]{\label{gcerror}\includegraphics[width=0.5\textwidth]{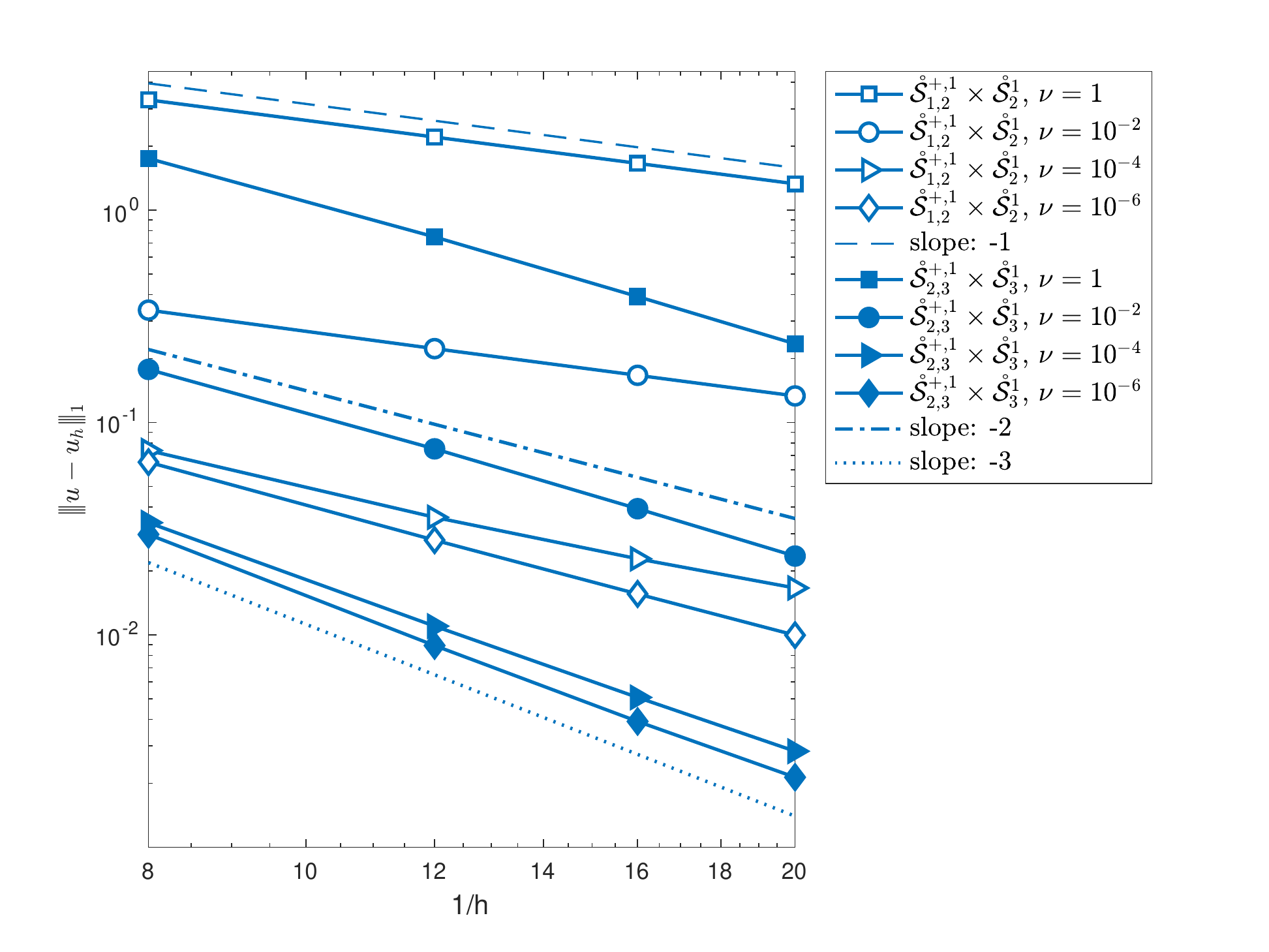}}
\caption{Convergence rates for Example 1}
\label{fig:rate2}
\end{figure}

\begin{example}
\end{example}
We consider the Brinkman problem \eqref{prob-brinkman} with $\Omega =[0,1]\times[0,1]\times[0,1]$.
Cut the unit cube into eight cubes and each of the eight cubes is refined into eight half-sized
cubes uniformly to get a higher-level grid. The mesh size $h$ of each level is $1/2$, $1/4$, $1/8$, $1/16$, respectively.

We use the finite element space $\mathring{\mathcal S}_{r-1}^{+,2}\times \mathring{\mathcal S}_{r-2}^{3}$ with $r=2,3$ to discretize $(\bm{u};p)$ in \eqref{variation-prob-brinkman}. Let $\alpha = 1$ in \eqref{prob-brinkman}, and let the effective viscosity $\nu$ be $1$, $10^{-2}$, $10^{-4}$, and $10^{-6}$. We consider the system \eqref{prob-brinkman} with $g=0$ and
$\bm f=-\div(\nu \grad\bm u)+\alpha\bm u+\grad p$, where 
\begin{equation}
\bm{u}=\operatorname{curl}\begin{pmatrix}
y^{2}(1-y)^{2}x(1-x)z^{2}(1-z)^{3}\\
x^{2}(1-x)^{2}y(1-y)z^{2}(1-z)^{3}\\
0
\end{pmatrix},
\end{equation}
and 
\begin{equation}
p = \left(x-\frac{1}{2}\right)\left(y-\frac{1}{2}\right)(1-z).
\end{equation}

For different values of $\nu$ and $h$, we present the errors of the velocity $\bm{u}_h$ measured in various norms and the errors of the pressure $p_h$ measured in the $L^{2}$ norm in Figure \ref{fig:rate}.
It can be observed that the optimal order of convergence is achieved, and the discretization is stable as $\nu$ tends to 0. In particular, Figure \ref{converfig:d} shows that $\3bar\bm u-\bm u_h\3bar_2$ has one-order higher convergence rate when $\nu$ is sufficiently small, which is in accordance with Remark \ref{high-conv-Brinkman}.

%When $\mu$ is sufficiently small, the first two terms $h|\bm{u}|_{s}+h|\curl\bm{u}|_{s}$ in the right hand side of \eqref{err:u:anorm22} will dominate, and hence $\3bar\bm u-\bm u_h\3bar_1$ will have one-order higher accuracy.

\begin{figure}[h]
\captionsetup[subfigure]{font=footnotesize}
\centering
\subfloat[The convergence rates of $|\bm{u}-\bm{u}_h|_{1,h}$.]{\label{converfig:a} \includegraphics[width=0.5\textwidth]{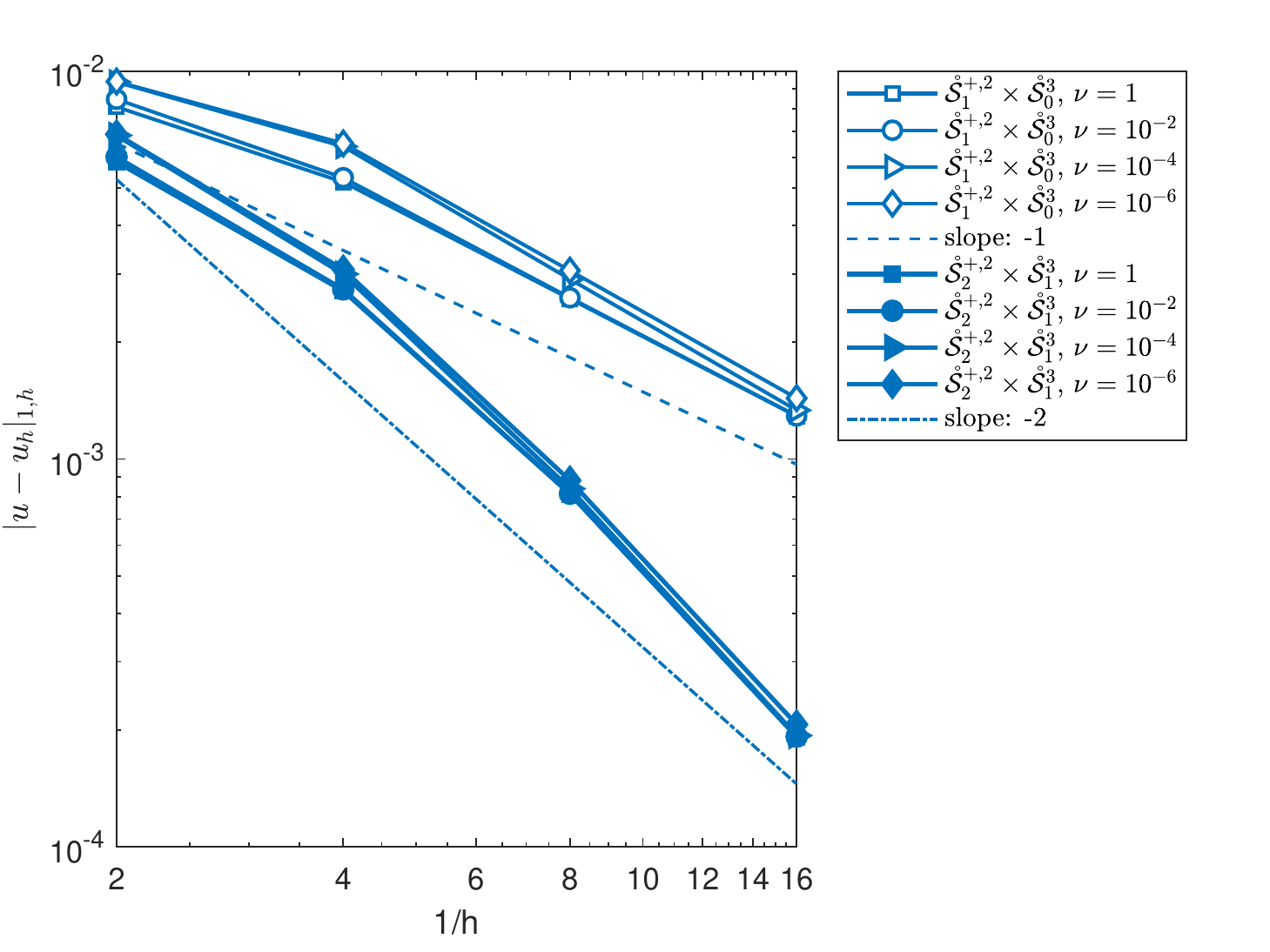}}%请勿换行
\subfloat[The convergence rate of $\|\bm{u}-\bm{u}_h\|$.]{\label{converfig:c}\includegraphics[width=0.5\textwidth]{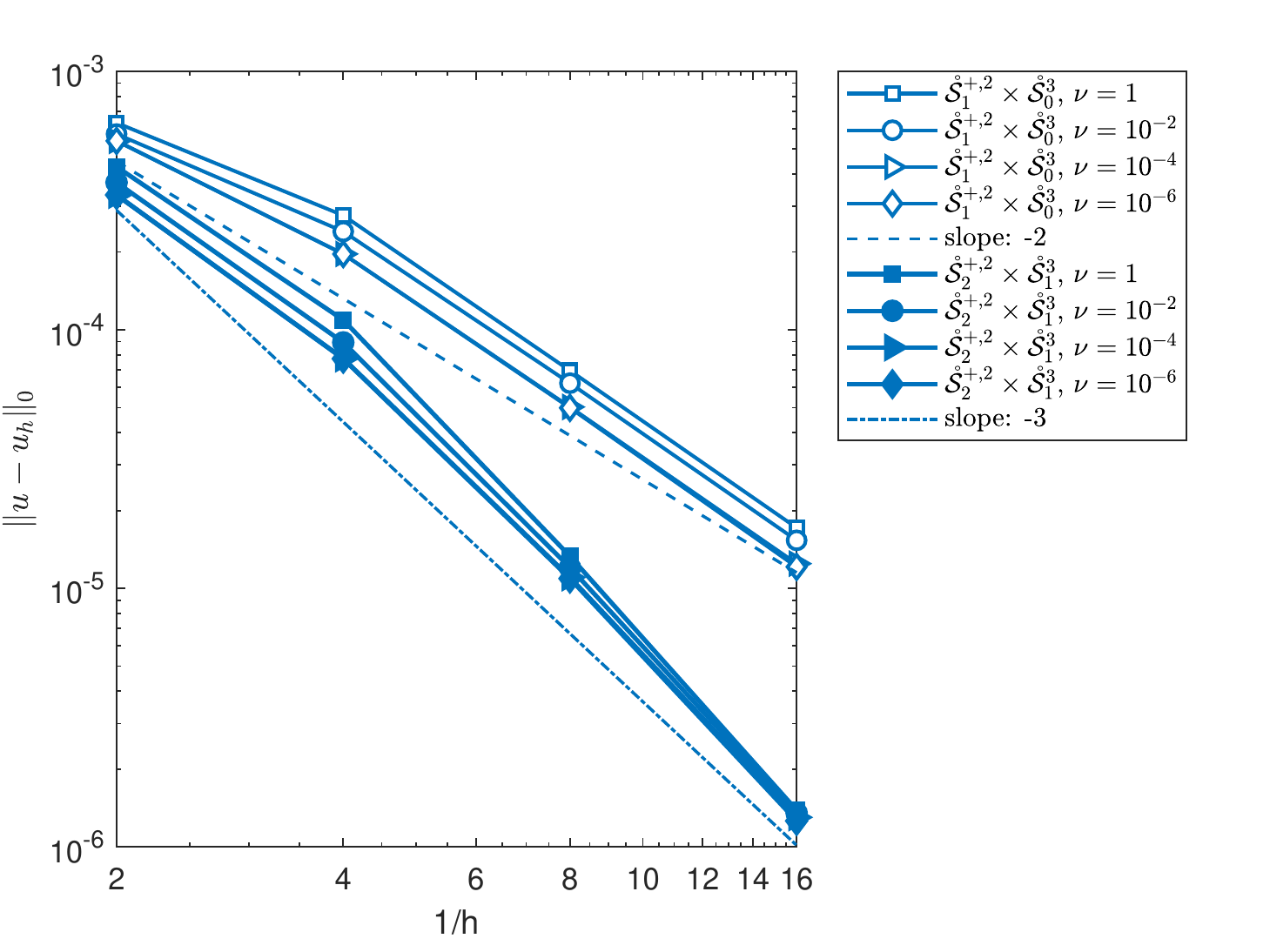}}\\
\subfloat[The convergence rate of $\3bar\bm{u}-\bm{u}_{h}\3bar_2$.]{\label{converfig:d}\includegraphics[width=0.5\textwidth]{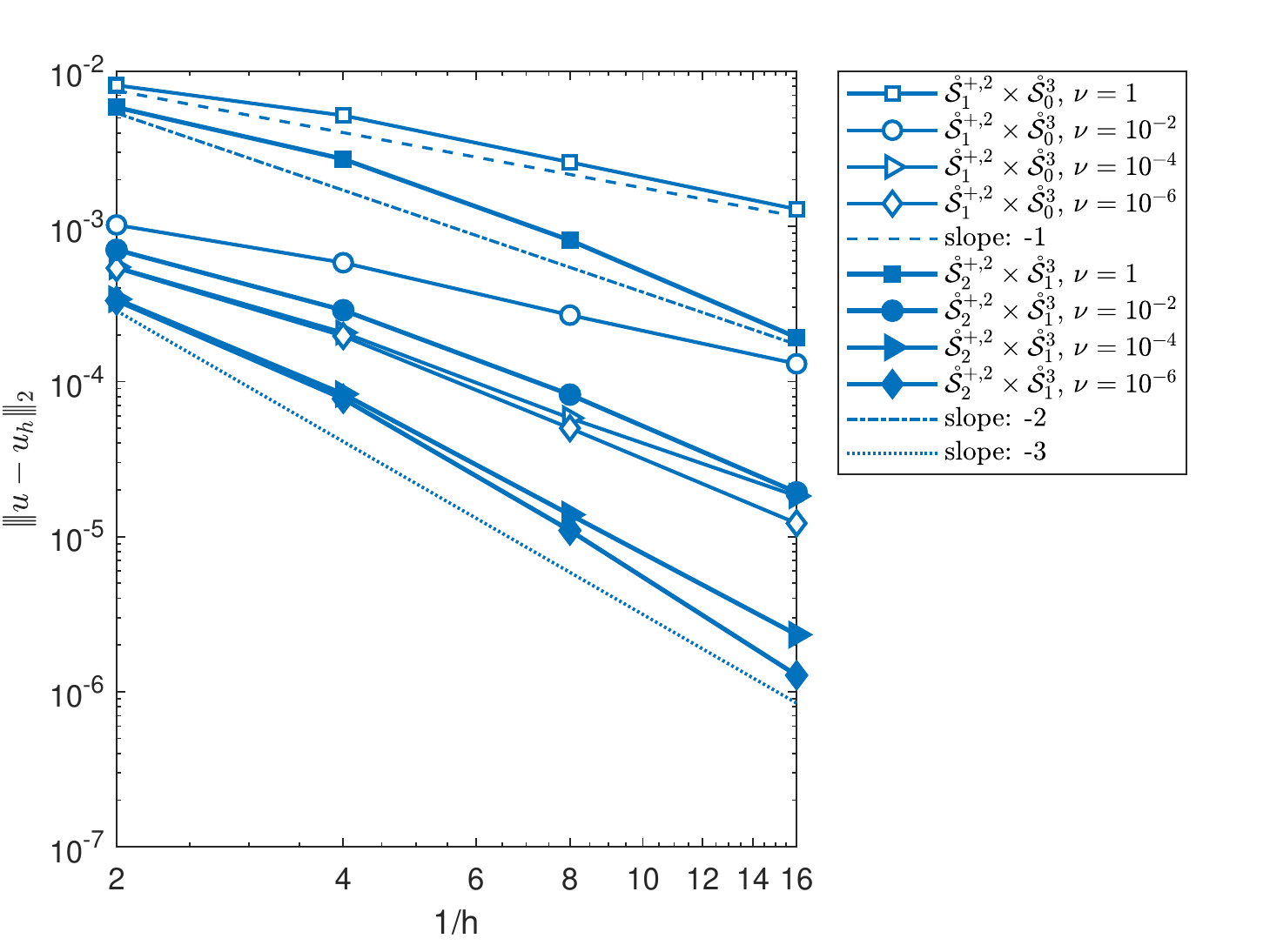}}
\subfloat[The convergence rate of $\|p-p_h\|$.]{\label{converfig:b}\includegraphics[width=0.5\textwidth]{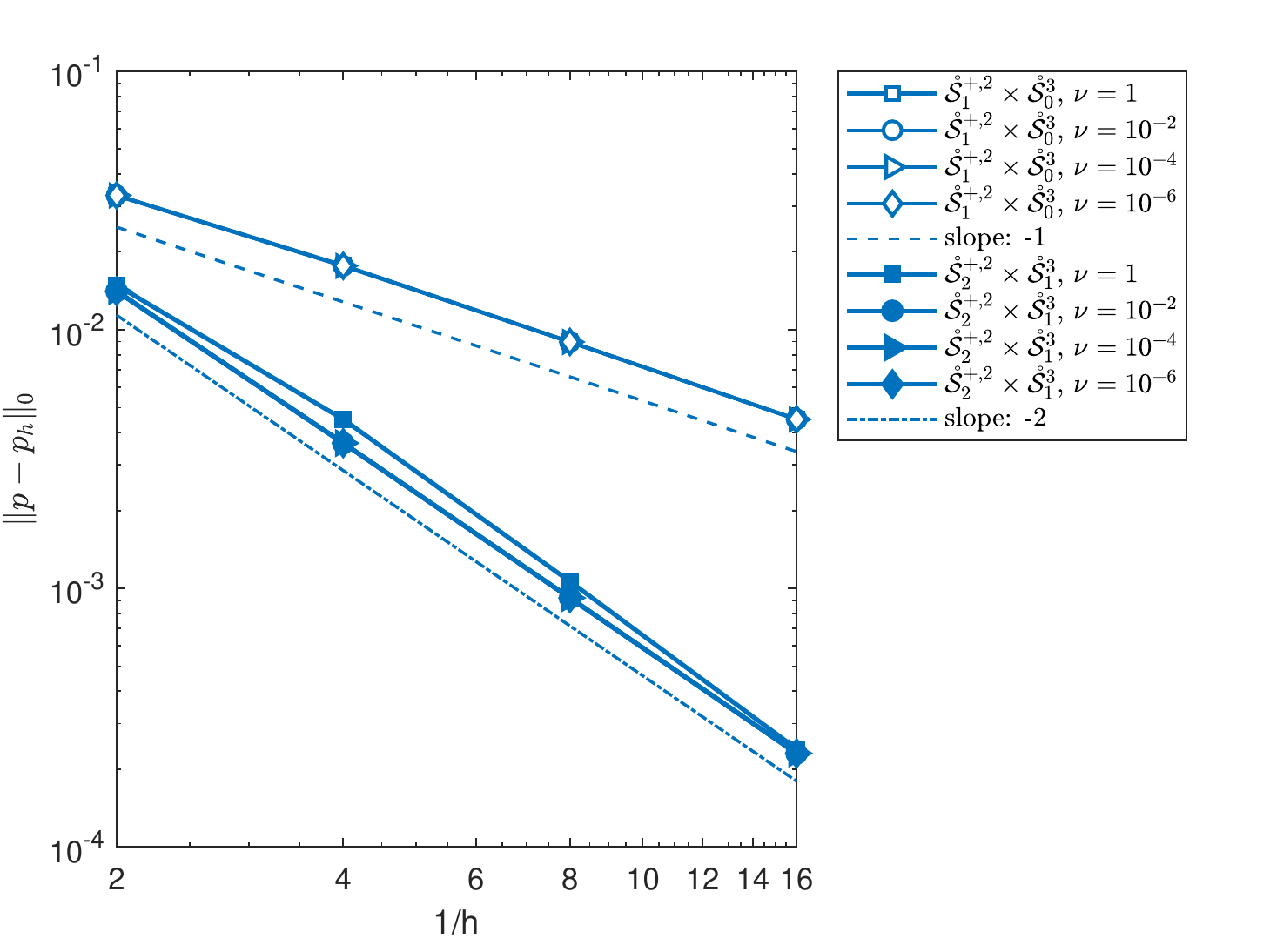}}
\caption{Convergence rates for Example 2 }
\label{fig:rate}
\end{figure}

% \begin{figure}
% \centering
% \begin{minipage}{0.5\textwidth}
% \centering
% \includegraphics[scale=0.42]{umuuh.pdf}
% \caption{$\|\bm{u}-\bm{u}_{h}\|$}
% \label{L2:DC}
% \end{minipage}~~
% \begin{minipage}{0.5\textwidth}
% \centering
% \includegraphics[scale=0.42]{gradumuuh.pdf}
% \caption{$\|\grad_{h}(\bm{u}-\bm{u}_{h})\|$}
% \label{H1:DC}
% \end{minipage}
% \end{figure}

% \begin{figure}
% \centering
% \begin{minipage}{0.5\textwidth}
% \centering
% \includegraphics[scale=0.42]{pmph.pdf}
% \caption{$\|p-p_{h}\|$}
% \label{L2:DC:p}
% \end{minipage}~~
% \begin{minipage}{0.5\textwidth}
% \centering
% \includegraphics[scale=0.42]{quadcurl.pdf}
% \caption{$\|\bm{u}-\bm{u}_{h}\|_{\grad\curl,h}$}
% \label{quadcurl}
% \end{minipage}
% \end{figure}

\section{Conclusion}
In this paper, we constructed a family of $H(\grad\curl)$-nonconforming elements for the $-\curl\Delta\curl$ problems and a family of uniformly stable $[H^1]^3$-nonconforming cubical elements for the Brinkman problem. The two families of elements can fit into a de Rham complex starting with the serendipity finite element space and ending with the piecewise polynomial space. Because of their $H(\curl)$ and $H(\div)$-conformity, they can also be applied to the Darcy problem and Maxwell equations even though this is not a focus of this paper. 

The newly-proposed $H(\grad\curl)$-nonconforming element has at least 48 DOFs on each element. To further reduce the number of DOFs, we will construct fully nonconforming elements in future works.

In this paper, we considered cubical elements, while it is not hard to extend this construction to rectangular meshes. In the future, we will extend the construction to general quadrilateral and hexahedral meshes.

\bibliographystyle{plain}
\bibliography{quadcurl-3d-V4}{}
~
\\

\end{document}